\definecolor{myblue}{rgb}{0.2,0,0.9}
\definecolor{blue-violet}{rgb}{0.54, 0.17, 0.89}
\definecolor{myblue}{rgb}{0.2,0,0.9}
\definecolor{blue_violet}{rgb}{0.54, 0.17, 0.89}
\definecolor{darkgreen}{rgb}{0,0.35,0}
\newlength\myindent
\DeclareRobustCommand*\cal{\@fontswitch\relax\mathcal}
\newtheorem{thm}{Theorem}[section]
\newtheorem{pro}[thm]{Proposition}
\newtheorem{lem}[thm]{Lemma}
\numberwithin{equation}{section}
\theoremstyle{definition}
\newtheorem{rem}[thm]{Remark}
\newtheorem{dfn}[thm]{Definition}
\newtheorem{as}[thm]{Assumption}
\RenewDocumentCommand{\title}{om}{%
	\IfNoValueTF{#1}
	{\gdef\shorttitle{}}
	{\gdef\shorttitle{#1}}%
	\gdef\@title{#2}%
}
\title{Numerical method for nonlinear Kolmogorov PDEs\\ via sensitivity analysis}
\author{Daniel Bartl}
\address{Department of Mathematics, Department of Statistics and Data Science, National University of Singapore}
\email{bartld@nus.edu.sg}
\author{Ariel Neufeld}
\address{Division of Mathematical Sciences, Nanyang Technological University}
\email{ariel.neufeld@ntu.edu.sg}
\author{Kyunghyun Park}
\address{Division of Mathematical Sciences, Nanyang Technological University}
\email{kyunghyun.park@ntu.edu.sg}
\date{\today.}
\keywords{Numerical method, nonlinear Kolmogorov PDEs, sensitivity analysis, Monte Carlo method, model uncertainty, Feynman-Kac representation.}
\thanks{2020 \textit{MSC classifications.} 90C31, 60G65, 60H05, 91G10}
\thanks{\textit{Funding:}
D.\;Bartl is grateful for financial support through the Austrian Science Fund (grant doi: 10.55776/P34743 and 10.55776/ESP31), the Austrian National Bank (Jubil\"aumsfond, project 18983), and a  Presidential Young Professorship grant (`Robust statistical learning for complex data').
A.\;Neufeld gratefully acknowledges financial support by the Nanyang Assistant Professorship Grant (NAP Grant) {\it Machine Learning based Algorithms in Finance and Insurance} and the MOE AcRF Tier 2 Grant {\it MOE-T2EP20222-0013}. K.~Park acknowledges financial support by the Presidential Postdoctoral Fellowship of Nanyang Technological University and the National Research Foundation of Korea (grant DOI: RS-2025-02633175).}
\begin{document}
\begin{abstract}
		We examine nonlinear Kolmogorov partial differential equations (PDEs). Here the nonlinear part of the PDE comes from its Hamiltonian where one maximizes over all possible drift and diffusion coefficients which fall within a $\varepsilon$-neighborhood of pre-specified baseline coefficients. Our goal is to quantify and compute how sensitive those PDEs are to such a small nonlinearity, and then use the results to develop an efficient numerical method for their approximation. We show that as $\varepsilon\downarrow 0$, the nonlinear Kolmogorov PDE equals the linear Kolmogorov PDE defined with respect to the corresponding baseline coefficients plus $\varepsilon$ times a correction term which can be also characterized by the solution of another linear Kolmogorov PDE involving the baseline coefficients. As these linear Kolmogorov PDEs can be efficiently  solved in high-dimensions by exploiting their Feynman-Kac representation, our derived sensitivity analysis then provides a Monte Carlo based numerical method which can efficiently solve these nonlinear Kolmogorov equations. We establish an error and complexity analysis for our numerical method. Moreover, we present numerical results with up to 100 dimensions to demonstrate our theory and support the applicability of our numerical\;method.
\end{abstract}

\vspace*{-0.2cm}
\maketitle

\section{Introduction}\label{sec:intro}
Kolmogorov partial differential equations (PDEs) are widely used to describe the evolution of  underlying diffusion processes over time. These PDEs are applied in various fields, for instance
to model dynamics in physics and chemistry (e.g., \cite{van1992stochastic,pascucci2005kolmogorov,widder1976heat}), to analyze some population growth  in biology 
(e.g., \cite{kimura1957some,logan2009mathematical})
to model the evolution of stock prices in finance and economics
(e.g., \cite{barles1997convergence,black1973pricing,wilmott1993option}),
or for climate modeling
(e.g.,\cite{hasselmann1976stochastic,thuburn2005climate}),
to name but a few.

Consider the following\footnote{The choice of the boundary $v(T,x)=f(x)$ to be defined at terminal time is arbitrary and could be placed at initial time using the time change $t\mapsto T-t.$} Kolmogorov PDE\;(see, e.g.,\;\cite{bogachev2022fokker,cerrai2001second})
\begin{align}\label{eq:linear.PDE.intro}
	\left\{
	\begin{aligned}
		&{\partial_t}v(t,x) +   \langle b, \nabla_x v(t,x)\rangle+\frac{1}{2}\operatorname{tr}(\sigma \sigma^\top D_x^2v(t,x))=0\quad\mbox{on}\;\;{\cal D}_{T};\;\\
		&v(T,x)=f(x)\quad\mbox{on}\;\;\mathbb{R}^d.
	\end{aligned}
	\right.
\end{align} 
with ${\cal D}_{T}:=[0,T)\times \mathbb{R}^d$. One of the common modeling challenges arising throughout all fields consists in finding the true drift and volatility parameters ($b$, $\sigma$) to describe the underlying evolution process, which is usually unknown. Typically, one would either try to estimate the parameters using historical data or choose them based on experts' opinions. However, it is well-known that model misspecification may lead to wrong outcomes which might be fatal, as e.g., happened during the financial crisis in 2008 when financial derivatives were priced based on solutions of \eqref{eq:linear.PDE.intro} but with corresponding parameters which were not consistent with the market behavior. 

To overcome this difficulty of \textit{model uncertainty}, a common approach is to consider a set $\mathcal{U}$ of parameters $(b,\sigma)$, where each element $(b,\sigma)\in \mathcal{U}$ is considered as a candidate for the true but unknown drift and volatility. Then, one uses this set of candidates $\mathcal{U}$ to describe the evolution of the underlying process \textit{robustly with respect to its parameters} by considering the following nonlinear Kolmogorov PDE~{(see, e.g.,~\cite{fleming2006controlled,pham2009continuous,yong2012stochastic})}
\begin{align}\label{eq:nonlinear.PDE.intro}
	\left\{
	\begin{aligned}
		&{\partial_t}v(t,x) +  \sup_{(b,\sigma)\in {\cal U}} \left\{ \langle b, \nabla_x v(t,x)\rangle+\frac{1}{2}\operatorname{tr}(\sigma \sigma^\top D_x^2v(t,x))\right\}=0\quad\mbox{on}\;\;{\cal D}_T;\\ 
		&v(T,x)=f(x)\quad\mbox{on}\;\; \mathbb{R}^d.
	\end{aligned}
	\right.
\end{align}

A natural choice for $\mathcal{U}$ we consider throughout this paper is to start with baseline parameters $(b^o,\sigma^o)$ that one considers as first best estimates for the true but unknown drift and volatility and then to consider the set 
\begin{align}\label{eq:uncertain.char}
	\mathcal{B}^\varepsilon:= \big\{(b,\sigma)\in \mathbb{R}^d\times \mathbb{R}^{d\times d}\;:\;\lvert b-b^o \rvert\leq \gamma\varepsilon,\;\lVert \sigma-\sigma^o \rVert_{\operatorname{F}}\leq \eta\varepsilon\big\}
\end{align}
of all coefficients that fall within the (weighted by $\gamma$, $\eta \in [0,1]$) $\varepsilon$-neighborhood of the baseline coefficients, for some pre-specified $\varepsilon>0$. Typical choices for $\gamma$ and $\eta$ consist of 
$(\gamma,\eta)=(1,0)$ representing drift uncertainty \cite{CHMP2002,PengS_gexp1997,PW22,PW23}, $(\gamma,\eta)=(0,1)$ representing volatility uncertainty \cite{DenisHuPeng2011,Nutz2013,Peng07,soner2011quasi,peng2010nonlinear},
as well as $(\gamma,\eta)=(1,1)$ for simultaneous drift and volatility uncertainty~\cite{NeufeldNutz2014,neufeld2017nonlinear,neufeld2018robust,park2025irreversible}. We also refer to \cite{cheridito2007second,matoussi2015robust,soner2012wellposedness,soner2013dual} for the connection of these nonlinear Kolmogorov PDEs~\eqref{eq:nonlinear.PDE.intro} with second-order backward stochastic differential equations.

\vspace{0.2cm}
\noindent
The goal of this paper is to analyze how sensitive  Kolmogorov equations are with respect to their parameters $b$ and $\sigma$.
%
More precisely, for small $\varepsilon>0$ let $v^{\varepsilon}(t,x)$ denote the (unique viscosity) solution of the nonlinear Kolmogorov PDE \eqref{eq:nonlinear.PDE.intro} with $\mathcal{U}:=\mathcal{B}^\varepsilon$ and let $v^{0}(t,x)$ be the solution of the linear one \eqref{eq:linear.PDE.intro} with respect to the baseline parameters $b^o$ and $\sigma^o$. 
In this context, we aim to answer the following~questions:
\vspace{0.1em}

\begin{itemize}[leftmargin=2.em]
	\item [$\cdot$] Can we identify and efficiently calculate the sensitivity $\partial_\varepsilon v^0(t,x):=\lim_{\varepsilon\downarrow 0}\frac{1}{\varepsilon}(v^\varepsilon(t,x)$ $-v^0(t,x))$?
	\item[$\cdot$] Can we find a numerical method which can efficiently solve high-dimensional nonlinear Kolmogorov PDEs of the form \eqref{eq:nonlinear.PDE.intro} with $\mathcal{U}:=\mathcal{B}^\varepsilon$ for  small $\varepsilon>0$?
\end{itemize}


In Theorem~\ref{thm:main} we show that if $f$ is sufficiently regular and satisfies some mild growth conditions (see Assumption~\ref{as:objective}) as well as $\sigma^o$ is invertible, then the following hold. 
For every {$(t,x)\in [0,T)\times \mathbb{R}^d$}, as $\varepsilon\downarrow 0$, we obtain that 
\[
v^\varepsilon(t,x)= v^0(t,x)+ \varepsilon \cdot \partial_\varepsilon v^0(t,x)+O(\varepsilon^2),
\]
where  $\partial_\varepsilon v^0(t,x)$ is  given by
\begin{align*}
	\begin{aligned}
		\partial_{\varepsilon}v^0(t,x) 
		&= \mathbb{E}\bigg[\int_t^T   \gamma \left|w\left(s, x+ X_s^{o}\right)\right|+\eta \left\| \mathrm{J}_xw\left(s, x+X_s^{o} \right) \sigma^o \right\|_{\operatorname{F}}  ds \, \bigg| \, X^o_t= 0 \bigg].
	\end{aligned}
\end{align*}
Here 
\begin{itemize}[leftmargin=2.em]
	\item[--] $X^o_s:=b^o s + \sigma^o W_s$, $s\in[0,T]$ is a stochastic process driven by  a standard $d$-dimensional Brownian motion  $(W_s)_{s\in[0,T]}$,
	\item[--]  $\mathbb{E}[ \,\cdot \, | X^o_t= 0]$ denotes the conditional expectation given $X^o_t= 0$,
	\item[--]  $w=(w^1,\dots,w^d)$ where each  $w^i$ is the solution of the linear  Kolmogorov equation 
	\begin{align*}
		\left\{
		\begin{aligned}
			&\partial_s w^i(s,x)+ \langle b^o, \nabla_x w^i(s,x) \rangle+  \frac{1}{2} \operatorname{tr} \left((\sigma^o) (\sigma^o)^\top D^2_{x}w^i(s,x)\right)  =0 \quad \mbox{on}\;\; {\cal D}_{t,T};\\
			&w^i (T,x)=\partial_{x_i}f(x)\quad  \mbox{on}\;\; \mathbb{R}^d,
		\end{aligned}\right.
	\end{align*}
	with ${\cal D}_{t,T}:=[t,T)\times \mathbb{R}^d$, and $\mathrm{J}_x w$ stands for the Jacobian of  $w$.
\end{itemize} 

We highlight that Theorem~\ref{thm:main} also provides a methodology to approximate the solution $v^{\varepsilon}$ of the nonlinear Kolmogorov PDE \eqref{eq:nonlinear.PDE.intro}. Indeed, note that by the Feynman-Kac representation, we have for any $t\leq s\leq T$ and $x\in \mathbb{R}^d$ that
\begin{equation*}
	\begin{split}
		v^0(t,x) &= \mathbb{E}\Big[f(x+X_T^o)\Big| X_t^o = 0\Big],\\
		w(s,x+X_s^o) &= {\mathbb{E}}\Big[\nabla_xf(x+X_s^o+\widetilde{X}_T^o) \Big| \widetilde{X}_{s}^o = 0 \Big],\\
		(\mathrm{J}_xw)(s,x+X_s^o) &=  {\mathbb{E}}\left[ D^2_x f(x+X_s^o+\widetilde{X}_T^o) \Big| \widetilde{X}_{s}^o = 0 \right],	
	\end{split}
\end{equation*}
where $\widetilde X^o_s:=b^o s + \sigma^o \widetilde W_s$, $s\in[0,T]$, with  $(\widetilde{W}_s)_{s\in[0,T]}$ being another standard $d$-dimensional Brownian motion independent of $(W_s)_{s\in[0,T]}$.
Therefore, we can implement the approximation $v^0+\varepsilon \cdot\partial_\varepsilon v^0$ of $v^\varepsilon$ by a Monte Carlo based scheme (see Algorithm \ref{alg:ours}) which is efficient even in high dimensions (see Section~\ref{sec:numeric} for our numerical results in up to $d=100$ dimensions). We provide an error and computational complexity analysis of our numerical scheme (see Theorem \ref{thm:MC}). Moreover, we present numerical results to support the applicability of our Monte Carlo based scheme.

		\vspace{0.5em}
		\noindent
		\textbf{Related Literature.} Since solutions of Kolmogorov PDEs and parabolic PDEs in general typically cannot be solved explicitly and hence need to be approximately solved, there has been a lot of efforts to develop such numerical approximation methods. We refer~e.g.~to \cite{smolyak1963quadrature,tadmor2012review,thomee2007galerkin} for deterministic approximation methods (e.g., finite difference and finite element methods, spectral Galerkin methods, and sparse grid methods) and to \cite{giles2008multilevel,gobet2016monte,graham2013stochastic} for stochastic approximation methods including Monte Carlo approximations. 
		Recently, there has been an intensive interest in deep-learning based algorithms that can approximately solve high-dimensional linear/nonlinear parabolic PDEs (e.g., \cite{beck2021deep,han2018solving,hure2020deep,sirignano2018dgm}).

		Sensitivity analysis of robust optimization problems have been established mostly with respect to `Wasserstein-type' of uncertainty by considering an (adapted) Wasserstein ball with radius $\varepsilon$  around an (estimated) baseline probability measure for the underlying process either in a one-period model \cite{bartl2021sensitivity,blanchet2019quantifying,fuhrmann2023wasserstein,gao2022wasserstein,gao2023distributionally,mohajerin2018data,nendel2022parametric,obloj2021distributionally,pflug2007ambiguity} or in a multi-period discrete-time  model \cite{bartl2023sensitivity,sauldubois2024first, yifan}. 
        Moreover, in continuous time, 
        \cite{herrmann2017model,herrmann2017hedging} provided a sensitivity analysis of a particular robust utility maximization problem under volatility uncertainty, whereas \cite{BNP2023sensitivity} analyzed the sensitivity of general robust optimization problems under both drift and volatility uncertainty.

		The contribution of our paper is to provide a sensitivity analysis of nonlinear PDEs of type\;\eqref{eq:nonlinear.PDE.intro} and use this analysis to approximate those PDEs by some suitable linear PDEs as described above, leading to a numerical approximation algorithm 
		which is efficient even in high-dimensions.

		\section{Main results}
		\label{sec:main}
		Fix $d\in \mathbb{N}$ and endow $\mathbb{R}^d$ with the Euclidean inner product $\langle \cdot,\cdot\rangle$, and $\mathbb{R}^{d\times d}$ with the Frobenius inner product $\langle \cdot,\cdot \rangle_{\operatorname{F}}$, respectively. Let $\mathbb{S}^d$ be the set of all symmetric $d\times d$ matrices. Then fix a time horizon $T>0$, and for any $\varepsilon\geq0$ consider a nonlinear Kolmogorov PDE with the set $\mathcal{B}^\varepsilon$ given in \eqref{eq:uncertain.char}
		\begin{align}\label{eq:nonlinear.pde}
			\left\{
			\begin{aligned}
				&\partial_tv^\varepsilon(t,x)+ \sup_{(b,\sigma)\in \mathcal{B}^\varepsilon}\left\{\langle b, \nabla_x v^\varepsilon (t,x) \rangle  +\frac{1}{2} \operatorname{tr} \big(\sigma \sigma^\top D^2_{x}v^\varepsilon(t,x) \big) \right\}=0\quad \mbox{on}\;\; {\cal D}_T;\\
				&v^\varepsilon(T,x) = f(x)\quad \mbox{on} \;\; \mathbb{R}^d,
			\end{aligned}
			\right.
		\end{align}
		with ${\cal D}_{T}=[0,T)\times \mathbb{R}^d$, where $f:\mathbb{R}^d\rightarrow \mathbb{R}$ corresponds to the boundary condition. 
		
		We impose certain conditions on the boundary $f$ and coefficient $\sigma^o$ given in \eqref{eq:uncertain.char}. 
		\begin{as}\label{as:objective}
			The function $f\colon\mathbb{R}^d\to\mathbb{R}$ is continuously differentiable.
			Moreover, its Hessian $D_x^2f:\mathbb{R}^d\rightarrow \mathbb{S}^d$ exists in the weak sense\footnote{We refer to, e.g., \cite[Section 5.2]{evans2010partial} for the definition of weak derivatives.} and there are $\alpha \geq 1$ and $C_f>0$ such that  $\| D_x^2 f(x) \|_{\rm F} \leq C_f (1+|x|^\alpha)$ for every $x\in\mathbb{R}^d$.
		\end{as}
		
		\begin{as}\label{as:sigma.inverse} 
			The matrix $\sigma^o$ is invertible.
		\end{as}
		
		\begin{rem}\label{rem:sigma.inverse}
			Assumption \ref{as:sigma.inverse} ensures that $\lambda_{\min}(\sigma^o)$, the smallest singular value  of $\sigma^o$ is strictly positive; in particular for every $\varepsilon<\lambda_{\min}(\sigma^o)$ and  $(b,\sigma)\in\mathcal{B}^\varepsilon$, the matrix  $\sigma$ is invertible.
		\end{rem}
		
		We further impose a condition on the solution of the nonlinear Kolmogorov PDE, which relies on the notion of viscosity solutions (see Section \ref{sec:proof:pro:main} for the standard definitions of viscosity / classical solutions of PDEs).
		
		\begin{as}\label{as:comparison}
			For any $\varepsilon\geq 0$, there exists at most one viscosity solution $v^\varepsilon$ of \eqref{eq:nonlinear.pde} satisfying that there is $C>0$ such that for all $t\in[0,T]$ 
			\begin{align}\label{eq:growth_as}
				\lim_{|x|\rightarrow\infty}|v^\varepsilon(t,x)|e^{-C(\log(|x|))^2}=0.
			\end{align}
		\end{as}
		
		\begin{rem}\label{rem:comparison}
			It follows from \cite[Proposition 5.5]{neufeld2017nonlinear} that Assumption \ref{as:comparison} is satisfied if, e.g.,  Assumptions \ref{as:objective} and \ref{as:sigma.inverse} are satisfied and $f$ is bounded and Lipschitz continuous. Furthermore, if $\eta=0$,\;i.e.,\;there is no volatility uncertainty, then Assumptions\;\ref{as:objective}\;\&\;\ref{as:sigma.inverse} directly imply that Assumption~\ref{as:comparison} holds, see \cite[Theorem\;3.5]{barles1997backward}. We also refer to \cite[Remark\;3.6]{barles1997backward} for a detailed discussion on the growth condition\;\eqref{eq:growth_as}. 
		\end{rem}
		
		Now we collect some preliminary results in the next proposition on the solution of the nonlinear Kolmogorov PDE \eqref{eq:nonlinear.pde} together with the following linear Kolmogorov PDE defined using the baseline coefficients $b^o$ and $\sigma^o$ and the boundary condition $\partial_{x_i}f$, $i=1,\dots,d$, 
		\begin{align}\label{eq:linear.pde.1stdev}
			\left\{
			\begin{aligned}
				&\partial_s w^i(s,x)+ \langle b^o, \nabla_x w^i(s,x) \rangle+  \frac{1}{2} \operatorname{tr} \left((\sigma^o) (\sigma^o)^\top D^2_{x}w^i(s,x)\right)  =0\quad \mbox{on}\;\;{\cal D}_{t,T};\\
				&w^i (T,x)=\partial_{x_i}f(x)\quad \mbox{on}\;\; \mathbb{R}^d,
			\end{aligned}\right.
		\end{align}
		with ${\cal D}_{t,T}=[t,T)\times \mathbb{R}^d$, where we note that $f$ is the boundary condition given in~\eqref{eq:nonlinear.pde}.

		The corresponding proofs for the following proposition 
		can be found in Section~\ref{sec:proof:pro:main}. 
		\begin{pro}\label{pro:main}
			Suppose that Assumptions \ref{as:objective}, \ref{as:sigma.inverse}, and \ref{as:comparison} 
			are satisfied. Then the following hold: 
			\begin{itemize}
				\item [(i)] For any $\varepsilon\geq 0$, there exists a unique viscosity solution $v^\varepsilon:[0,T]\times\mathbb{R}^d\rightarrow \mathbb{R}$ of \eqref{eq:nonlinear.pde} satisfying the growth property given in \eqref{eq:growth_as}. 
				\item [(ii)] For any $i=1,\dots,d$, there exists a unique classical solution $w^i:[t,T]\times\mathbb{R}^d\rightarrow \mathbb{R}$ of \eqref{eq:linear.pde.1stdev} with polynomial growth. 
			\end{itemize}
		\end{pro}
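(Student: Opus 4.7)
The plan is to establish existence (with uniqueness following from Assumption \ref{as:comparison} in part (i) and from the Feynman--Kac representation in part (ii)) via a stochastic control representation for (i) and via the explicit Feynman--Kac formula for (ii), exploiting the non-degenerate Gaussian structure of the baseline process. As a preliminary observation, since $f\in C^1(\mathbb{R}^d)$ and its weak Hessian obeys $\|D_x^2 f(x)\|_{\rm F}\le C_f(1+|x|^\alpha)$, integrating along radial lines yields polynomial growth of $\nabla f$ (of order $\alpha+1$) and of $f$ itself (of order $\alpha+2$); in particular each $\partial_{x_i}f$ is continuous with polynomial growth, ensuring finiteness of all expectations appearing below.

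For part (i), I would define
\[
u^\varepsilon(t,x):=\sup_{(b,\sigma)\in\mathcal{A}^\varepsilon}\mathbb{E}\!\left[f\bigl(X_T^{t,x,b,\sigma}\bigr)\right],
\]
where $\mathcal{A}^\varepsilon$ is the set of progressively measurable, $\mathcal{B}^\varepsilon$-valued processes and $X^{t,x,b,\sigma}$ is the unique strong solution of $dX_s=b_s\,ds+\sigma_s\,dW_s$ with $X_t=x$. Compactness of $\mathcal{B}^\varepsilon$ combined with the standard dynamic programming principle and stochastic control theory (e.g., Fleming--Soner, Pham) implies that $u^\varepsilon$ is a continuous viscosity solution of \eqref{eq:nonlinear.pde}. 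Polynomial growth of $f$ together with uniform-in-$\mathcal{A}^\varepsilon$ moment bounds on $X_T^{t,x,b,\sigma}$ yields polynomial growth of $u^\varepsilon$, which is strictly stronger than the growth condition \eqref{eq:growth_as}. Uniqueness then follows directly from Assumption \ref{as:comparison}.

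For part (ii), set
\[
w^i(s,x):=\mathbb{E}\!\left[\partial_{x_i}f\bigl(x+b^o(T-s)+\sigma^o(W_T-W_s)\bigr)\right],
\]
well-defined by the polynomial growth of $\partial_{x_i}f$. Using Assumption \ref{as:sigma.inverse} I would rewrite $w^i(s,x)$ as an integral against the non-degenerate Gaussian density with mean $b^o(T-s)$ and covariance $(T-s)\sigma^o(\sigma^o)^\top$; smoothness of this density in $(s,x)$ for $s<T$ permits differentiating under the integral sign, yielding $w^i\in C^{1,2}([t,T)\times\mathbb{R}^d)$ with derivatives of at most polynomial growth uniformly on compact time subintervals of $[t,T)$. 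A direct computation (equivalently, It\^o's formula applied to $w^i(s,X_s^o)$) shows that $w^i$ solves \eqref{eq:linear.pde.1stdev} on $[t,T)\times\mathbb{R}^d$, while continuity at $s=T$ follows from continuity of $\partial_{x_i}f$ via dominated convergence. For uniqueness among strong solutions with polynomial growth, applying It\^o's formula to $\widetilde w^i(s,x+X_s^o-X_t^o)$ on $[t,T-\delta]$ and passing to the limit $\delta\downarrow 0$ using polynomial growth together with Gaussian moments recovers the Feynman--Kac representation, forcing $\widetilde w^i=w^i$.

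The most delicate step is the smoothness of $w^i$ in part (ii): the assumption guarantees only that $\partial_{x_i}f$ is continuous with polynomial growth, so differentiability of $w^i$ must be extracted entirely from the Gaussian smoothing afforded by the invertibility of $\sigma^o$. Justifying differentiation under the integral and controlling the resulting polynomial bounds uniformly on compact time subintervals relies crucially on Gaussian tail decay; alternatively, the weak-Hessian bound $\|D_x^2 f\|_{\rm F}\le C_f(1+|x|^\alpha)$ could be leveraged to shift derivatives onto the Gaussian kernel via integration by parts, should a direct differentiation argument prove insufficient.
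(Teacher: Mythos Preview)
Your argument is correct, and for part (ii) it is essentially the paper's own approach spelled out in detail: the paper simply cites Karatzas--Shreve \cite[Theorem~5.7.6 \& Remark~5.7.8]{KS1991} and Krylov \cite[Theorem~4.32]{Krylov1999} for existence via the Feynman--Kac representation under the non-degeneracy of $\sigma^o$, and Friedman \cite[Corollary~6.4.4]{friedman1975stochastic} for uniqueness among polynomially growing solutions --- precisely the content of your Gaussian-smoothing and It\^o/Feynman--Kac arguments.

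For part (i) your route genuinely differs. You work in the \emph{strong} formulation, taking progressively measurable $\mathcal{B}^\varepsilon$-valued controls on a fixed filtered space and invoking classical DPP/viscosity theory (Fleming--Soner, Pham); this is legitimate here because the dynamics have no state dependence and the control set is compact, so the standard hypotheses are met. The paper instead proves (i) via the \emph{weak} formulation: it defines $v^\varepsilon_{\operatorname{weak}}$ as a supremum over semimartingale laws in $\mathcal{P}^\varepsilon(t,x)$, establishes the DPP through the abstract measurable-selection machinery of Neufeld--Nutz and El~Karoui--Tan, proves joint continuity directly, and then appeals to \cite[Proposition~5.4]{neufeld2017nonlinear} for the viscosity property. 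The weak-formulation route is heavier but buys the paper a clean DPP without worrying about the known subtleties of strong-formulation DPP when the diffusion coefficient is controlled; it also dovetails with the later equivalence result (Proposition~\ref{pro:equiv.weak.strong}) that the paper needs anyway for the proof of Theorem~\ref{thm:main}. Your approach is more elementary and self-contained for this particular setting, at the cost of leaning on ``standard'' results whose full verification (especially continuity and DPP with controlled $\sigma$) is sometimes less routine than advertised.
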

		We proceed with our main result.
		To formulate it, denote by $O(\cdot)$ the Landau symbol and if $w^i$ is the solution to \eqref{eq:linear.pde.1stdev} with the boundary condition $\partial_{x_i}f$ for every $i=1,\dots, d$, let us define $w: [t,T]\times \mathbb{R}^d\rightarrow \mathbb{R}^d$ and $\mathrm{J}_x w: [t,T]\times \mathbb{R}^d\rightarrow \mathbb{R}^{d\times d}$ by 
		\begin{align}\label{eq:vector.value.jacobi}
			\begin{aligned}
				w(s,x) := \begin{pmatrix}
					w^1(s, x)\\
					\vdots\\
					w^d(s, x)
				\end{pmatrix},
				\qquad \mathrm{J}_xw (s,x) := 
				\begin{pmatrix}
					\nabla_x^\top  w^1(s, x) \\
					\vdots\\
					\nabla_x^\top  w^d(s, x)
				\end{pmatrix}.
			\end{aligned}
		\end{align}
        Note that under Assumptions \ref{as:objective} and \ref{as:sigma.inverse}, $v^0$ is the unique classical solution of the PDE~\eqref{eq:linear.PDE.intro} with coefficients $(b,\sigma)$ replaced by the reference coefficients $(b^o,\sigma^o)$; see, e.g., \cite{KS1991,friedman1975stochastic,Krylov1999}. 
       In particular, we have that $w=\nabla_x v^0$ and $\mathrm{J}_xw=D_x^2v^0$ on ${\cal D}_{t,T}$.
        
		Finally, let $X^o_t=b^o t + \sigma^o W_t$, $t\in[0,T]$, be defined on a probability space $(\Omega,{\cal F},\mathbb{P})$ with respect to a fixed $d$-dimensional Brownian motion $W=(W_t)_{t\in[0,T]}$. 
		
		\begin{thm}\label{thm:main}
			Suppose that Assumptions \ref{as:objective}, \ref{as:sigma.inverse}, and \ref{as:comparison} are satisfied. 
			For every $\varepsilon\geq 0$, let  $v^\varepsilon$ be the unique viscosity solution of \eqref{eq:nonlinear.pde} satisfying \eqref{eq:growth_as}, let $w^i$ be the unique classical solution of \eqref{eq:linear.pde.1stdev} with polynomial growth for every $i=1,\dots, d$, and let $w$,~$\mathrm{J}_xw$ be as in 
			\eqref{eq:vector.value.jacobi}.
			Then, for every $(t,x)\in [0,T)\times \mathbb{R}^d$ as $\varepsilon\downarrow 0$,
			\[
			v^\varepsilon(t,x)= v^0(t,x)+ \varepsilon \cdot \partial_\varepsilon v^0(t,x)+O(\varepsilon^2),
			\]
			where  $\partial_\varepsilon v^0(t,x)=\lim_{\varepsilon\downarrow 0}\frac{1}{\varepsilon}(v^\varepsilon(t,x)-v^0(t,x))$ is  given by
			\begin{align*}
				\begin{aligned}
					\partial_{\varepsilon}v^0(t,x) 
					&= \mathbb{E}\bigg[\int_t^T \left(  \gamma \left|w\left(s, x+ X_s^{o}\right)\right|+\eta \left\| \mathrm{J}_xw\left(s, x+X_s^{o} \right) \sigma^o \right\|_{\operatorname{F}} \right) ds \, \bigg| \, X^o_t= 0 \bigg],
				\end{aligned}
			\end{align*}
			with $\mathbb{E}[\cdot | X^o_t= 0]$ denoting the conditional expectation given $X^o_t= 0$.
		\end{thm}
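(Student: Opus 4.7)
My plan is to derive the expansion from the stochastic control representation of $v^\varepsilon$, a first-order Taylor expansion of $f$ along the baseline Gaussian process $X^o$, and an application of It\^o's formula to the functions $w^i$. Under Assumption~\ref{as:comparison} and standard HJB theory, the nonlinear PDE admits the representation
\[
v^\varepsilon(t,x)=\sup_{(b,\sigma)\in\mathcal{A}^\varepsilon}\mathbb{E}\bigl[f(X_T^{t,x;b,\sigma})\bigr],
\]
with $\mathcal{A}^\varepsilon$ the set of progressively measurable $\mathcal{B}^\varepsilon$-valued processes and $X^{t,x;b,\sigma}$ the strong solution of $dX_s=b_s\,ds+\sigma_s\,dW_s$, $X_t=x$. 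Each admissible pair may be written $b_s=b^o+\varepsilon\gamma b'_s$, $\sigma_s=\sigma^o+\varepsilon\eta\sigma'_s$ with $|b'_s|\le 1$ and $\|\sigma'_s\|_{\operatorname{F}}\le 1$. Setting $X^*_s:=x+X_s^o-X_t^o$ and $Y_s:=X_s^{t,x;b,\sigma}-X^*_s$, one has $Y_s=\varepsilon\int_t^s(\gamma b'_u\,du+\eta\sigma'_u\,dW_u)$, and Burkholder-Davis-Gundy yields $\mathbb{E}[|Y_s|^p]\le C_p\varepsilon^p$ uniformly in the control.

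I would next prove the uniform expansion
\[
\mathbb{E}\bigl[f(X_T^{t,x;b,\sigma})\bigr]-v^0(t,x)=\mathbb{E}\bigl[\nabla f(X^*_T)\cdot Y_T\bigr]+O(\varepsilon^2)
\]
and then identify the linear term. For the drift contribution, Fubini together with the tower property and the Feynman-Kac identity $\mathbb{E}[\partial_{x_i}f(X^*_T)\mid\mathcal{F}_s]=w^i(s,X^*_s)$ gives $\varepsilon\gamma\int_t^T\mathbb{E}[b'_s\cdot w(s,X^*_s)]\,ds$. For the martingale contribution, It\^o's formula applied to $w^i(s,X^*_s)$ has vanishing drift by \eqref{eq:linear.pde.1stdev}, so $dw^i(s,X^*_s)=(\mathrm{J}_xw(s,X^*_s)\sigma^o\,dW_s)_i$, and It\^o's isometry produces $\varepsilon\eta\int_t^T\mathbb{E}[\langle\mathrm{J}_xw(s,X^*_s)\sigma^o,\sigma'_s\rangle_{\operatorname{F}}]\,ds$. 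Pointwise maximization in the respective unit balls is attained by the progressively measurable selectors $b'_s=w(s,X^*_s)/|w(s,X^*_s)|$ and $\sigma'_s=\mathrm{J}_xw(s,X^*_s)\sigma^o/\|\mathrm{J}_xw(s,X^*_s)\sigma^o\|_{\operatorname{F}}$ (with $0/0:=0$), which transforms the supremum of the linear term into the Euclidean and Frobenius norms appearing in the theorem. Finally, the identity $\mathbb{E}[g(x+X_s^o-X_t^o)]=\mathbb{E}[g(x+X_s^o)\mid X_t^o=0]$, which holds since $X_s^o-X_t^o$ is independent of $X_t^o$, recasts the formula into the conditional form stated.

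The main obstacle will be establishing the Taylor remainder bound \emph{uniformly} in the control. Because $f$ is only $C^1$ with a weakly defined Hessian of polynomial growth $\|D_x^2f(x)\|_{\operatorname{F}}\le C_f(1+|x|^\alpha)$, I would first mollify $f$, prove the estimate for smooth approximants via the integral form of Taylor's theorem, and pass to the limit. The limiting step relies on dominated convergence together with Cauchy-Schwarz-type bounds of the form $\mathbb{E}[(1+|X^*_T|^\alpha+|Y_T|^\alpha)|Y_T|^2]\le C\varepsilon^2$, where the moments of $X^*_T$ are fixed Gaussian moments (Assumption~\ref{as:sigma.inverse} keeps densities under control) and the moments of $Y_T$ are controlled uniformly in $(b',\sigma')$ by BDG. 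A secondary, more routine task is to justify the control representation within the viscosity-solution framework of Assumption~\ref{as:comparison} and to verify the admissibility of the pointwise optimal selectors so that the supremum is indeed attained up to $o(\varepsilon)$, matching the $O(\varepsilon^2)$ remainder in both the upper and lower bounds.
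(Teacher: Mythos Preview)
Your proposal is correct and follows essentially the same approach as the paper: the control representation \eqref{eq:strong_preview}, the BDG-based moment bound (Lemma~\ref{lem:priori_estimate}), the first-order Taylor expansion of $f$ with uniform $O(\varepsilon^2)$ remainder (Lemma~\ref{lem:main_step1}), the identification of the linear term via It\^o/Feynman--Kac and It\^o isometry (Lemmas~\ref{lem:BSDE} and~\ref{lem:inner}), and the explicit maximization over the unit balls (Lemma~\ref{lem:main_step2}). The only notable difference is that the paper simply asserts the second-order Taylor inequality under the weak-Hessian hypothesis (Remark~\ref{rem:f.nablaf.integrable}) rather than justifying it by mollification, and it devotes a separate section (Section~\ref{sec:Weak}) to establishing the strong control representation that you flag as a ``secondary, more routine task.''
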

		
		\begin{rem}\label{rem:main_thm}
			We actually show that the approximation is (locally) uniform  in  $(t,x)$:
			There exists a constant $c$ (that depends on $T$, $\alpha$, $d$ and $C_f$ given in Assumption \ref{as:objective}, and the norms for $b^o$,\;$\sigma^o$) such that for every $\varepsilon < \min\{1,\lambda_{\min}(\sigma^o)\}$ (see Remark \ref{rem:sigma.inverse}) and every $(t,x)\in[0,T)\times\mathbb{R}^d$,
			\[ \left|	v^\varepsilon(t,x) - \left(  v^0(t,x)+ \varepsilon \cdot \partial_\varepsilon v^0(t,x) \right) \right| 
			\leq c (1+|x|^\alpha) \varepsilon^2.
			\]
		\end{rem}
		
		\vspace{0.5em}
		Let us mention some basic properties of the sensitivity result given in Theorem\;\ref{thm:main}, as well as how it can be used to construct numerical approximations of the PDE \eqref{eq:nonlinear.pde}, as explained in Section \ref{sec:numeric} below.
		To that end, recalling the process $X^o$ appearing in Theorem \ref{thm:main} with the corresponding Brownian motion $W$, let $\widetilde{X}^o_t:=b^o t + \sigma^o \widetilde{W}_t,$ $t\in[0,T]$, where $\widetilde{W}:=(\widetilde{W}_t)_{t\in[0,T]}$ is another standard $d$-dimensional Brownian motion independent of $W$. We remark the following Feynman-Kac representations:
		for every $(t,x)\in[0,T]\times \mathbb{R}^d$ and $s\in [t,T]$, it holds that 
		\begin{align}\label{eq:Feyn_1}
			\begin{aligned}
				v^0(t,x) &= \mathbb{E}\Big[f(x+X_T^o)\Big| X_t^o = 0\Big],\\
				w(s,x+X_s^o) &= {\mathbb{E}}\Big[\nabla_xf(x+X_s^o+\widetilde{X}_T^o) \Big| \widetilde{X}_{s}^o = 0 \Big].
			\end{aligned} 
		\end{align}
		
		Furthermore, denote by $(\mathrm{J}_xw)^{k,l}$ for every $k,l\in \{1,\dots,d\}$ the $(k,l)$-component of $\mathrm{J}_xw$ defined in \eqref{eq:vector.value.jacobi}. If $\nabla_x f$ is sufficiently smooth (at least continuously differentiable) and $D^2_xf$ is at most polynomially growing, then by \cite[Theorem~4.32]{Krylov1999}, $(\mathrm{J}_xw)^{k,l}$ also has the following Feynman-Kac representation: for $(t,x)\in [0,T]\times \mathbb{R}^d$ and $s\in[t,T]$,
		\begin{align}\label{eq:Feyn_2}
			\begin{aligned}
				(\mathrm{J}_xw)^{k,l}(s,x+X_s^o) =  {\mathbb{E}}\left[ \partial_{x_{k} x_{l}} f(x+X_s^o+\widetilde{X}_T^o) \Big| \widetilde{X}_{s}^o = 0 \right]. 
			\end{aligned}
		\end{align}
		Otherwise, if $\nabla_xf$ lacks that kind of regularity, we approximate $(\mathrm{J}_xw)^{k,l}$  via a finite difference quotient as follows: Let $e_l$ be a $d$-dimensional vector with value 0 in all the components except for the $l$-th component with value 1. Then 
		for sufficiently small~$h>0$, 
		\[
		(\mathrm{J}_xw)^{k,l} (s,x+X_s^o) = \partial_{x_l}w^k (s,x+X_s^o)  \approx \frac{1}{h}({w}^k(s,x+X_s^o+h\cdot e_l)-{w}^k(s,x+X_s^o)).
		\] 
		
		In particular, by \eqref{eq:Feyn_1} the approximation can be rewritten by 
		\begin{align}\label{eq:Feyn_3}
			\begin{aligned}
				&(\mathrm{J}_xw)^{k,l} (s,x+X_s^o)\\
				&\quad \approx \frac{1}{h}{\mathbb{E}}\left[\partial_{x_k}f(x+X_s^o+\widetilde{X}_T^o+h\cdot e_l )-\partial_{x_k}f(x+X_s^o+\widetilde{X}_T^o) \Big| \widetilde{X}_{s}^o = 0\right].
			\end{aligned}
		\end{align}
		Hence, the exact value of $\partial_\varepsilon v^0$ requires calculations of nested expectations. 

		\begin{algorithm}[t]
			\caption{ A Monte Carlo based scheme for $v^0$ and $\partial_{\varepsilon}v^0$.}\label{alg:ours}
			\begin{algorithmic}[1]
				{\footnotesize
					\STATE \textbf{Input:} $T>0$, $b^o\in \mathbb{R}^d$, $\sigma^o\in \mathbb{R}^{d\times d}$, $f:\mathbb{R}^d\rightarrow \mathbb{R}$ (satisfying Assumptions \ref{as:objective}, \ref{as:sigma.inverse}, and \ref{as:comparison}), $(t,x)\in [0,T)\times\mathbb{R}^d$, $N\in \mathbb{N}$, $M_0,M_1,M_2\in \mathbb{N}$ with $M_0\geq M_1$ and $h\geq 0$ (sufficiently~small); \\
					\STATE \textbf{Generate:}  \\
					\hskip1.5em (i) The uniform subdivision $t_i=t+i \Delta t$, $i\in \{0,\dots,N\}$ with $\Delta t=\frac{T-t}{N}$; \\
					\hskip1.5em (ii) $M_0$ samples ${\cal W}(j)\sim {\cal N}(0,\mathrm{Id}_{\mathbb{R}^d})$ (i.e., standard $d$-dimensional normal distribution), $j\in \{1,\dots,M_0\}$, and $M_2$ samples $\widetilde{{\cal W}}(m)\sim {\cal N}(0,\mathrm{Id}_{\mathbb{R}^d})$, $m\in\{1,\dots,M_2\}$; \\
					\hskip1.5em (iii) $(N+1)\times M_0$ samples ${\cal X}_i(j):=b^o t_i +\sigma^o {\cal W}(j) \sqrt{t_i}$, $i\in \{0,\dots,N\}$ and $j\in \{1,\dots,M_0\}$ and $(N+1)\times M_2$ samples $\widetilde {\cal X}_i(m):=b^o t_i +\sigma^o \widetilde {\cal W}(m) \sqrt{t_i}$, $i\in \{0,\dots,N\}$ and $m\in \{1,\dots,M_2\}$;\\
					\STATE \textbf{Function} $v^0_{\operatorname{mc}}(t,x;M_0)$\textbf{:} \\ 
					\hskip1.5em Recall $M_0$ realizations ${\cal X}_N(j)$, $j\in \{1,\dots,M_0\}$; \\
					\textbf{Return} {$\frac{1}{M_0} \sum_{j=1}^{M_0} f(x+{\cal X}_N(j))$}\\  
					\STATE \textbf{Function} $(\partial_\varepsilon v^0)_{\operatorname{mc}}(t,x;N,M_1,M_2)$\textbf{:} \\ 
					\hskip1.5em Recall $N\times M_1$ realizations ${\cal X}_i(j)$, $i\in \{0,\dots,N-1\}$ and $j\in \{1,\dots,M_1\}$, and $N\times M_2$ realizations $\widetilde {\cal X}_i(m)$, $i\in \{0,\dots,N-1\}$ and $m\in \{1,\dots,M_2\}$; \\
					\hskip1.5em {\bf for} $i=0$ {\bf to} $N-1$ {\bf \textrm{and}} $j=1$ {\bf to} $M_1$ \\
					\hskip3em  Compute $\widehat{w}^k(i,j):=\frac{1}{M_2} \sum_{m=1}^{M_2} \partial _{x_k} f(x+{\cal X}_i(j)+\widetilde {\cal X}_{N-i}(m))$ $\forall k\in \{1,\dots,d\}$; \\  
					\hskip1.5em {\bf end}\\
					\hskip1.5em {\bf if} {$\nabla_x f$ {\normalfont is continuously differentiable and has at most polynomial growth}} \\
					\hskip3.em {\bf for} $i=0$ {\bf to} $N-1$ {\bf and} $j=1$ {\bf to} $M_1$ \\
					\hskip4.5em Compute $(\widehat{\mathrm{J}_xw})^{k,l}(i,j)  :=\frac{1}{M_2} \sum_{m=1}^{M_2} \partial_{x_{k},x_{l}} f(x+{\cal X}_i(j)+\widetilde {\cal X}_{N-i}(m))$ $\forall k,l\in \{1,\dots,d\}$;\\
					\hskip3.em {\bf end} \\
					\hskip 1.5em {\bf else} \\ 
					\hskip3.em {\bf for} $i=0$ {\bf to} $N-1$ {\bf and} $j=1$ {\bf to} $M_1$ \\
					\hskip4.5em Compute $\widehat{w}^{k,l}_h(i,j)  :=\frac{1}{M_2} \sum_{m=1}^{M_2} \partial_{x_{k}} f(x+{\cal X}_i(j)+he_l+\widetilde{\cal X}_{N-i}(m))$  $\forall k,l\in \{1,\dots,d\}$; \\
					\hskip4.5em Compute $(\widehat{\mathrm{J}_xw})^{k,l}(i,j)  :=\frac{1}{h} (\widehat{w}^{k,l}_h(i,j) -\widehat{w}^k(i,j))$ $\forall k,l\in \{1,\dots,d\}$;\\
					\hskip3.em {\bf end} \\
					\hskip1.5em {\bf end} \\
					{\bf Return}{ $\sum_{i=0}^{N-1}  \Delta t \frac{1}{M_1} \sum_{j=1}^{M_1} (\gamma \big|\widehat{w}(i,j)\big| + \eta \| \widehat{\mathrm{J}_xw}(i,j) \sigma^o \|_{\operatorname{F}} )$}
				}
			\end{algorithmic}
		\end{algorithm}

			Combining Theorem \ref{thm:main} with corresponding probabilistic representations for the functions $v^0$, $w$, and $\mathrm{J}_xw$  given in \eqref{eq:Feyn_1} and \eqref{eq:Feyn_2} (or \eqref{eq:Feyn_3}), we derive a Monte Carlo based scheme to implement both the sensitivity $\partial_\varepsilon v^0$ as well as the approximated solution  $v^0+\varepsilon \cdot \partial_\varepsilon v^0$ of the nonlinear PDE \eqref{eq:nonlinear.pde} for every $\varepsilon< \min\{1,\lambda_{\min}(\sigma^o)\}$ (see Remark \ref{rem:main_thm}). 	We provide a pseudo-code in Algorithm~\ref{alg:ours} to show how it can be implemented\footnote{\label{footnote:link}All the numerical experiments have been performed with the following hardware configurations: a {{Macbook Pro}} with {{Apple M2 Max}} chip, 32 GBytes of memory, and Mac OS 13.2.1. While we implement the \textsc{Matlab} code only on the CPU, we take advantage of the GPU acceleration (Metal Performance Shaders (MPS) backend) for implementing the \textsc{Python} codes. All the codes are provided in the following link: \url{https://github.com/kyunghyunpark1/Sensitivity_nonlinear_Kolmogorov}}.

			Moreover, we provide an error and complexity analysis of our numerical scheme. To that end, let us introduce some notation.  
			
			Fix $(t,x)\in [0,T)\times \mathbb{R}^d$, and let $N\in \mathbb{N}$ (i.e., the number of steps in the time discretization for $\partial_\varepsilon v^0$) and $M_0,M_1,M_2\in \mathbb{N}$ 
			(i.e., the number of samples for each expectation involved in $v^0$ and $\partial_\varepsilon v^0$) be given.
			\begin{itemize}[leftmargin=2.5em]
				\item [(i)] 
				Let $t_i:=t+i \Delta t$, $i\in \{0,\dots,N\}$ with $\Delta t:=\frac{T-t}{N}$;
				\item [(ii)] For $i\in\{0,\dots,N-1\}$, let $\xi^1_{T,t},\dots,\xi^{M_0}_{T,t}$ be independently identically distributed (i.i.d.) random samples of $M_0$ realizations of $X_T^o$ given $X_t^o=0$. Furthermore, let $\xi_{t_i,t}^1,\dots,\xi_{t_i,t}^{M_1}$ be i.i.d~random samples of $M_1$ realizations of $X_{t_i}^o$ given $X_t^o=0$ and $\widetilde \xi_{T,t_i}^1,\dots,\widetilde \xi_{T,t_i}^{M_2}$ be i.i.d~random samples of $M_2$ realizations of $\widetilde X_{T}^o$ given $\widetilde X_{t_i}^o=0$, where %
				$\xi_{t_i,t}^{m}$ and $\widetilde \xi_{T,t_i}^{n}$ are independent for every $m=1,\dots,M_1$ and $n=1,\dots,M_2$. 
			\end{itemize}
			For a sufficiently integrable random variable $Z$, we set $\|Z\|_{L^2}:=\mathbb{E}[|Z|^2]^\frac{1}{2}$. 
			
			\begin{thm}\label{thm:MC}
				Suppose that Assumptions \ref{as:sigma.inverse} and \ref{as:comparison} are satisfied. 
				Furthermore, assume that $f$ is three times continuously differentiable with its third derivatives having at most  polynomial growth.
				Let $(t,x)\in [0,T)\times \mathbb{R}^d$ and $\varepsilon<1\wedge \lambda_{\min}(\sigma^o)$ be given. Denote for $N,M_0,M_1,M_2\in \mathbb{N}$~by 
				\begin{align}\label{eq:MC_estimates}
					\begin{aligned}
						\mbox{$\widehat{v}^{0,(t,x)}_{M_0}$}&\mbox{$:= \frac{1}{M_0} \sum_{l=1}^{M_0} f(x+\xi_{T,t}^l)$},\\
						\mbox{$\widehat{v}^{1,(t,x)}_{N,M_1,M_2}$}&\mbox{$:= \sum_{i=0}^{N-1}\frac{1}{M_1}\sum_{m=1}^{M_1}|\frac{1}{M_2}\sum_{n=1}^{M_2}\nabla_xf(x+\xi_{t_i,t}^m+\widetilde {\xi}^n_{T,t_i})|\Delta t$},\\
						\mbox{$\widehat{v}^{2,(t,x)}_{N,M_1,M_2}$}&\mbox{$:=\sum_{i=0}^{N-1}\frac{1}{M_1}\sum_{m=1}^{M_1}\|\frac{1}{M_2}\sum_{n=1}^{M_2}D^2_xf(x+\xi_{t_i,t}^m+\widetilde {\xi}^n_{T,t_i})\sigma^o\|_{\operatorname{F}}\Delta t$}.
					\end{aligned}
				\end{align}
				\begin{itemize}
					\item [(i)] Then as $M_0,M_1,M_2,N\rightarrow \infty$,  
					\begin{align*}
						\begin{aligned}
							&\Big\|v^0(t,x) +\varepsilon\cdot \partial_{\varepsilon}v^0(t,x)-\Big(\widehat{v}^{0,(t,x)}_{M_0}  +\varepsilon\cdot\big(\gamma\widehat{v}^{1,(t,x)}_{N,M_1,M_2}+\eta \widehat{v}^{2,(t,x)}_{N,M_1,M_2}\big)\Big)\Big\|_{L^2}\\
							&\quad =O(1/\sqrt{N})+O(1/{\sqrt {M_0}})+O(1/\sqrt{M_1})+O(1/\sqrt{M_2}).
						\end{aligned}
					\end{align*}
					\item [(ii)] Denote by $\mathfrak{C}(N,M_0,M_1,M_2)$ the computational complexity of the estimates given in \eqref{eq:MC_estimates}. Then, as $M_0,M_1,M_2,N\rightarrow \infty$, 
					\begin{align*}
						\mathfrak{C}(N,M_0,M_1,M_2)=  O(M_0) + O(N M_1 M_2).
					\end{align*}
				\end{itemize}
			\end{thm}

			\begin{rem}\label{rem:dim}
				The Landau symbol $O$ in Theorems \ref{thm:main}, \ref{thm:MC} depends on the parameters $t,x,f, T, b, \sigma,d$ through a multiplicative constant. 
				Our proof shows that this dependence is of a low polynomial degree, and in particular not exponential in $d$.
			\end{rem}

			\section{Examples}\label{sec:numeric}
			We analyze two numerical examples to support the applicability of Algorithm \ref{alg:ours}.
		\subsection{Example\;1}\label{sec:ex1}
	Let us start with the following $1$-dimensional example.
Let $d=1$, $T=1$, $b^o= 1$, $\sigma^o=1$, $f(x)=x^4$, $(t,x)=(0,0)$, $N=100$, $M_0=2.4\times 10^6$, and $M_1=M_2= 2.4\times 10^4$. Under this case, Assumptions \ref{as:objective} and \ref{as:sigma.inverse} are obviously satisfied. Furthermore, since $\lambda_{\min}(\sigma^o)=1$, we can and do choose any~$\varepsilon <1$.  

As the function $f=x^4$ is convex, the probabilistic representation for $v^\varepsilon$ given in \eqref{eq:strong_preview} (that will be proven in Lemma \ref{lem:weak_viscosity} and Proposition \ref{pro:equiv.weak.strong})  ensures that $v^\varepsilon(t,x)$ is convex in $x$ and hence the corresponding nonlinear Kolmogorov PDE given in \eqref{eq:nonlinear.pde} can be rewritten by the following quasilinear parabolic equation 
\begin{align}\label{eq:quasi_linear}\partial_tv^\varepsilon+ \frac{ (\sigma^o+\eta\varepsilon)^2}{2} \partial_{xx}v^\varepsilon+{b}^o \partial_xv^\varepsilon+\sup_{|\widetilde{b}|\leq \gamma\varepsilon }\big( \widetilde{b} \partial_xv^\varepsilon\big) =0\quad \mbox{on}\;\;{\cal D}_T;
\end{align} 
and $v^\varepsilon(T,x)=x^4$ on $\mathbb{R}$, with ${\cal D}_T=[0,T)\times \mathbb{R}^d$. In particular, since the PDE \eqref{eq:quasi_linear} is linear in the second derivative and the boundary $f=x^4$ is a polynomial, Remark\;\ref{rem:comparison} guarantees that
\eqref{eq:quasi_linear} admits a unique viscosity solution satisfying \eqref{eq:growth_as}, which ensures Assumption~\ref{as:comparison} to hold.

\begin{figure}[t]
	\centering
	\subfigure[Case when $(\gamma,\eta)=(1,0),\quad \quad\;$  $\quad v^\varepsilon \approx 10.000 + \varepsilon \cdot 16.4908$\;\;with$\quad\quad$  $\quad\quad v^0=10.000$, $\partial_\varepsilon v^0 =16.4908$.]{\label{fig00a}\includegraphics[scale=0.313]{./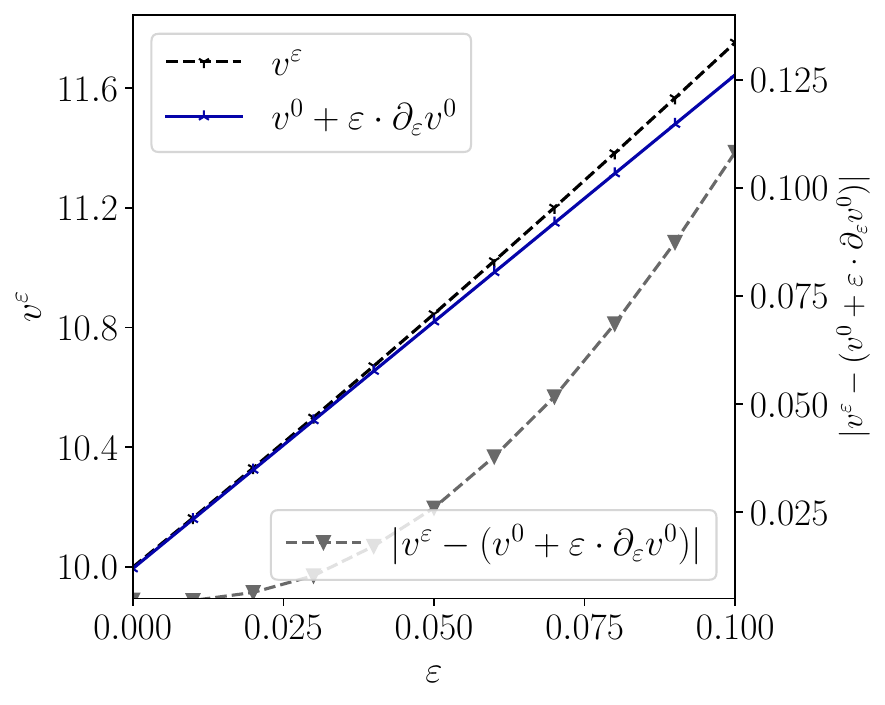}}\;
	\subfigure[Case when $(\gamma,\eta)=(0,1),\quad \quad\;$ $v^\varepsilon \approx 10.000  +\varepsilon \cdot  24.1101$\;\;with$\quad \quad$ $\quad \quad v^0=10.000$, $\partial_\varepsilon v^0 =24.1101$]{\label{fig00b}\includegraphics[scale=0.313]{./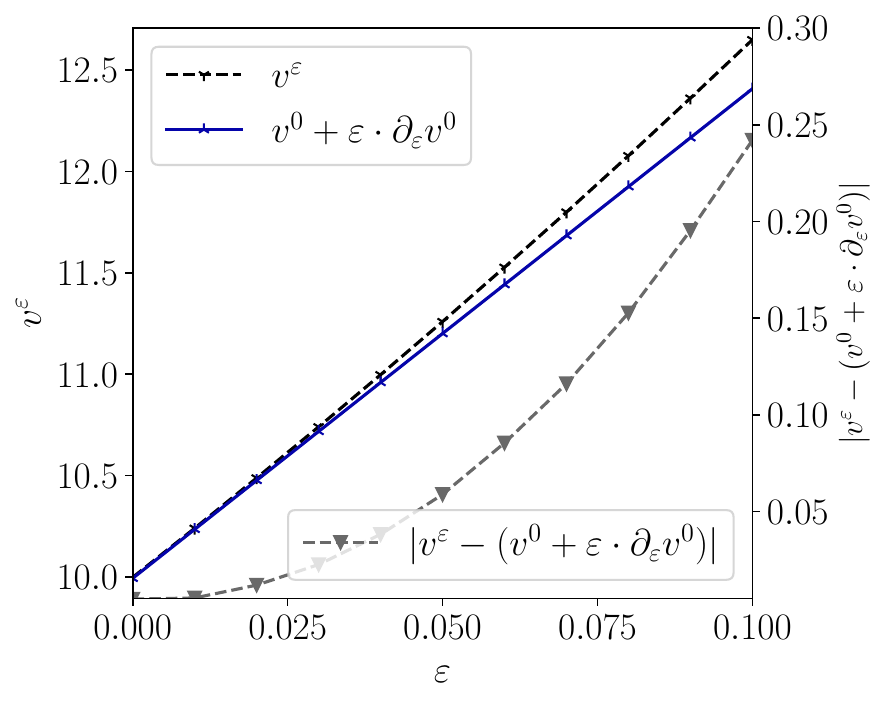}}\;
	\subfigure[Case  when $(\gamma,\eta)=(1,1),\quad \quad\;$  $v^\varepsilon\approx 10.000 + \varepsilon\cdot 40.6098$\;\;with$\quad\quad$ $v^0=10.000$, $\partial_\varepsilon v^0 = 40.6009$.]{\label{fig00c}\includegraphics[scale=0.313]{./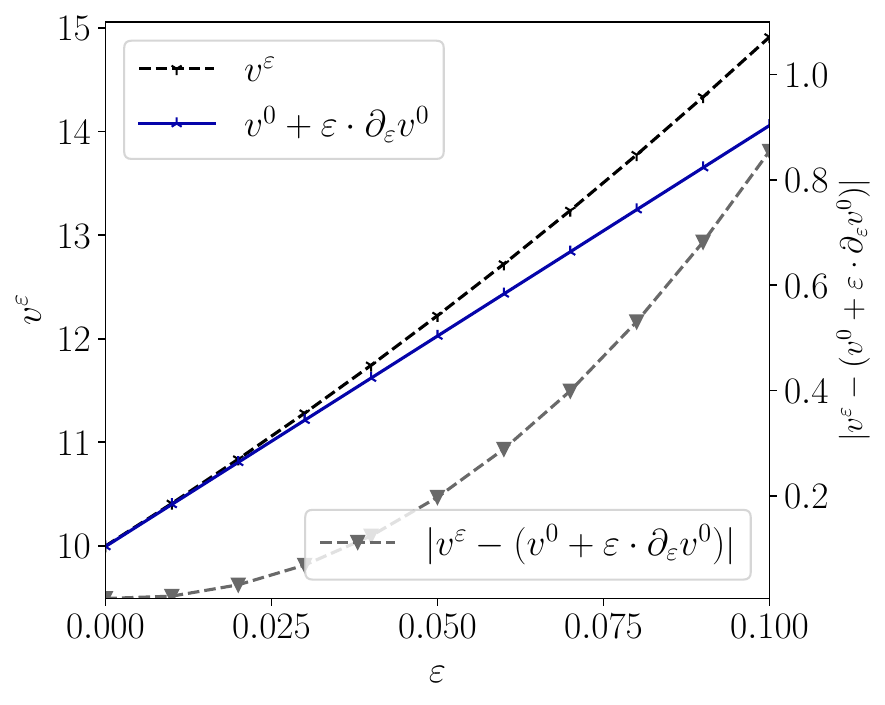}}
	\caption{Comparative analysis between the approximated solution $v^0+\varepsilon\cdot \partial_\varepsilon v^0$ and the actual counterpart $v^\varepsilon$  over varying $\varepsilon$.}
	\label{fig00}
\end{figure}

\begin{figure}[t]
	\centering
	\subfigure[Stability w.r.t.\;$N\quad\quad\;$  in {[$10,\dots,100$]}.]{\label{fig001b}\includegraphics[scale=0.235]{./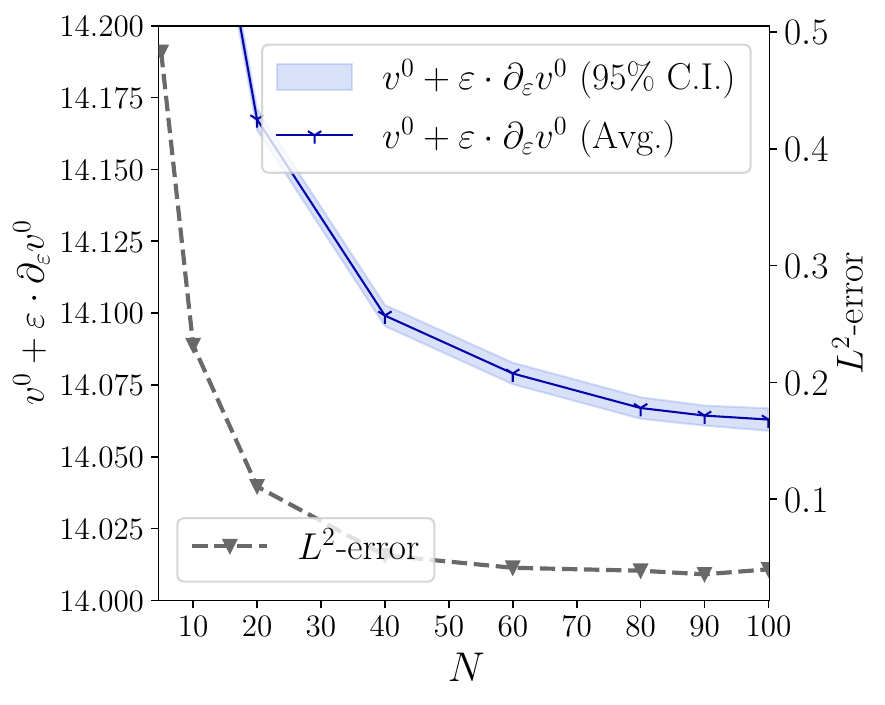}}
	\subfigure[Stability w.r.t.\;$M_0\quad\quad$ in {[$5 \cdot 10^4,\dots, 2.4\cdot 10^6$]}.]{\label{fig001a}\includegraphics[scale=0.235]{./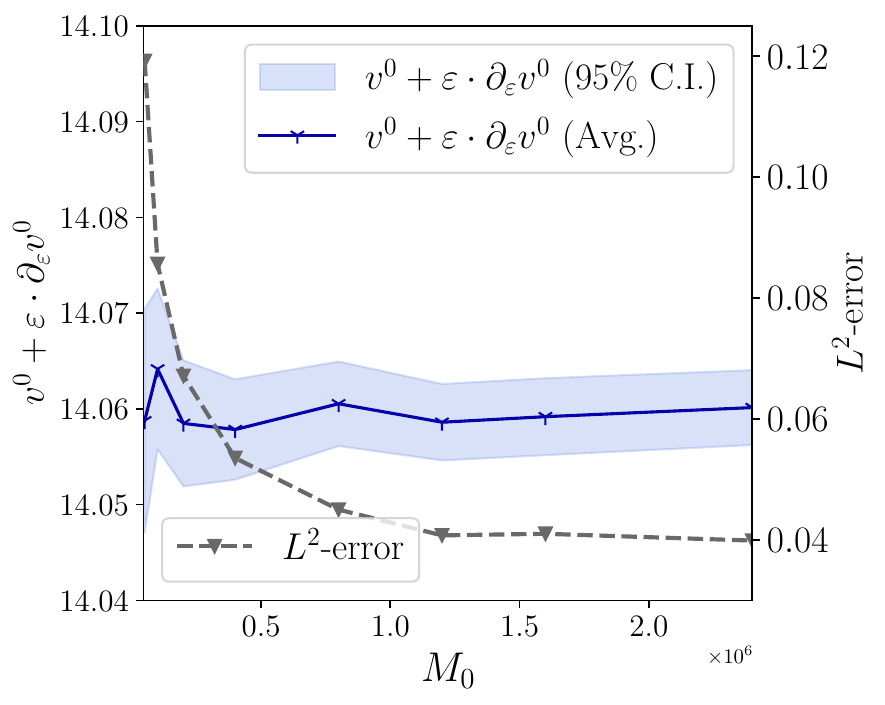}}
	\subfigure[Stability w.r.t.\;$M_1\quad\quad$ in {[$10^3,\dots,2.4\cdot 10^4$]}.]{\label{fig001c}\includegraphics[scale=0.235]{./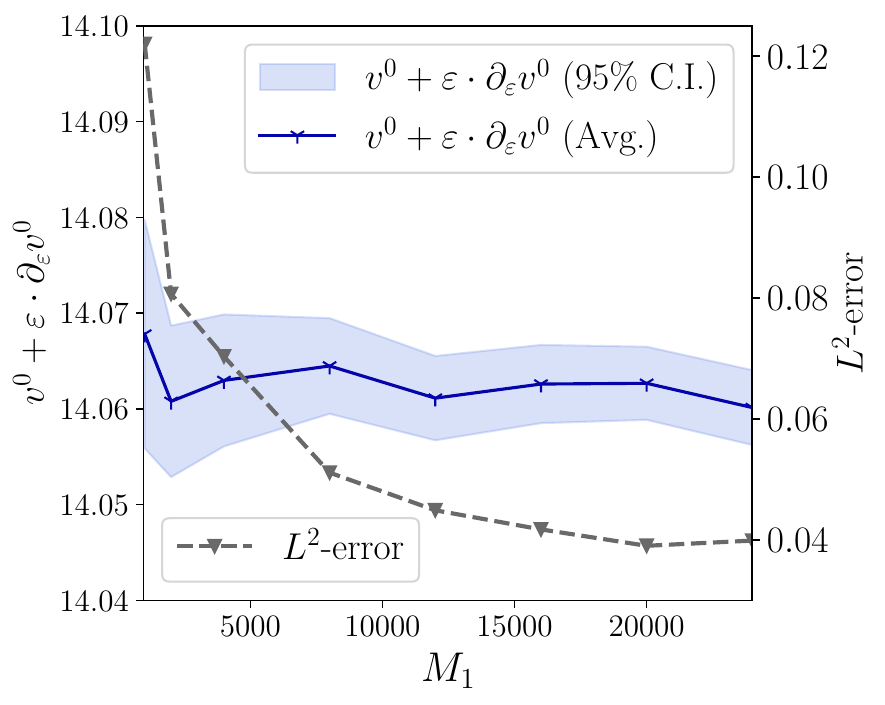}}
	\subfigure[Stability w.r.t.\;$M_2\quad\quad$ in {[$10^3,\dots,2.4\cdot 10^4$]}.]{\label{fig001d}\includegraphics[scale=0.235]{./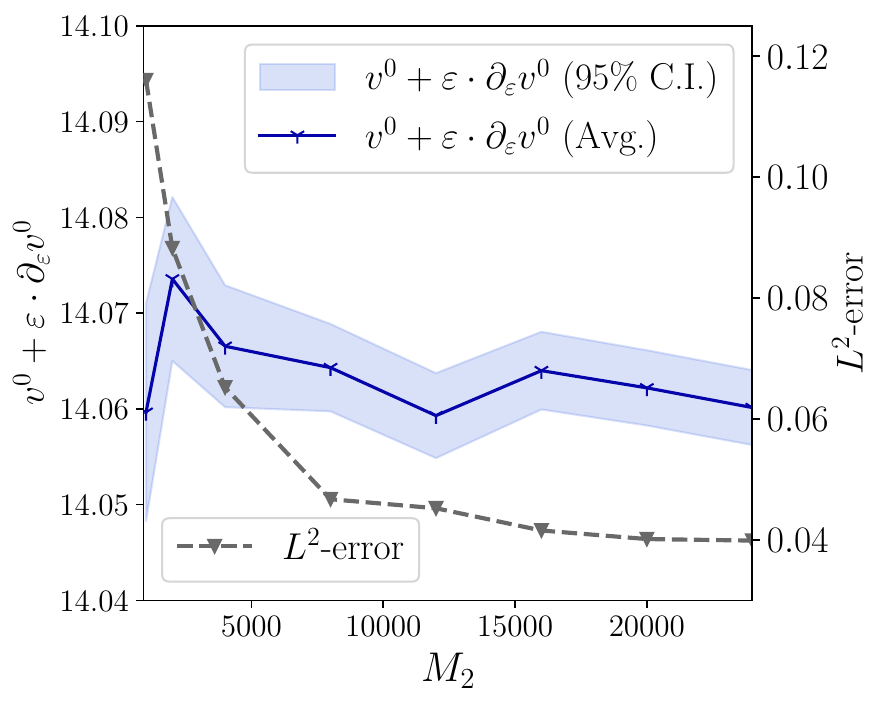}}
	\caption{{\small Stability analysis with respect to the model parameters $N,M_0,M_1,M_2$ in Algorithm~\ref{alg:ours}. 
			We fix $\varepsilon=0.1$ and $\gamma=\eta=1$, and consider the basis parameters $N=100$, $M_0=2.4\times 10^6$ and $M_1=M_2= 2.4\times 10^4$, as specified in Section\;\ref{sec:ex1}. 
			In each plot, all but one parameters are fixed (e.g.\ in the left hand image, $M_0,M_1,M_2$ are fixed and $N$ is varied).
			The plots features blue lines to represent the average of estimates $v^0+\varepsilon \cdot \partial_\varepsilon v^0$ across 400 independent runs of the \textsc{Python} code. Moreover, the grey dashed lines depict the  $L^2$ errors across the 400 runs, where the benchmark for these errors is the average value (i.e., $v^0+\varepsilon \cdot \partial_\varepsilon v^0=14.0601$) under the basis parameters.}}
\label{fig001}
\end{figure}

Fig.\;\ref{fig00} shows the comparison between $v^\varepsilon$ and $v^0+ \varepsilon \cdot \partial_\varepsilon v^0$ with varying $\varepsilon\leq 0.1$, in which we obtain numerical results on $v^\varepsilon$ by applying a finite difference approximation on the semilinear PDE \eqref{eq:quasi_linear} (we refer to {\tt Code{\_}1.m} and {\tt Code{\_}2.m} given in the link provided in Footnote\;\ref{footnote:link}; based on \cite[\textsc{Matlab} Code 7 in Section~3]{hutzenthaler2019multilevel}) and obtain numerical results on $v^0+ \varepsilon \cdot \partial_\varepsilon v^0$ by using our Monte Carlo based scheme given in Algorithm \ref{alg:ours} (we refer to {\tt Code{\_}3.ipynb} given in the mentioned link).  As it has been proven in Theorem \ref{thm:main},  we can observe in all the plots of Figure \ref{fig00} that the error $|v^\varepsilon-(v^0+ \varepsilon \cdot \partial_\varepsilon v^0)|$  of the approximation increases quadratically in $\varepsilon$.

Fig.\;\ref{fig001} shows that the average estimates of $v^0+\varepsilon \cdot \partial_\varepsilon v^0$ only have small variation with respect to the number of Monte Carlo samples $M_0,M_1,M_2$ (as indicated in plots\;(b)-(d)), while they first rapidly converge to and then stabilize at the benchmark estimate as the number of discretization $N$ increases to 100 (see plot\;(a)).  Moreover, the $L^2$-error curves exhibit rapid decay (seemingly proportional to the inverse square root of the varying parameters) which aligns with Theorem\;\ref{thm:MC}\;(i).

\subsection{Example\;2} In this example we implement Algorithm \ref{alg:ours} for several multi-dimensional cases with another boundary:  Let $T=1$, $f(x)=\sin (\sum_{i=1}^dx_i)$, $(t,x)=(0,(0,\cdots,0))$, $N=100$, $M_0=2\cdot 10^6$, and $M_1=M_2 =2 \cdot 10^4$. Then for any dimension $d\in \mathbb{N}$, we randomly generate $b^o\in \mathbb{R}^d$, $\sigma^o\in \mathbb{R}^{d\times d}$ in the following way: 
\begin{itemize}[leftmargin=2em]
	\item [$\cdot$] $b^{o}:= \widetilde{b}/(\sum_{i=1}^d|\widetilde{b}^{i}|)$, where $\widetilde{b}$ is a $d$-dimensional random variable such that\footnote{We denote by $\operatorname{U}([a,b])$ for $a,b\in\mathbb{R}$ with $a<b$ the uniform distribution with values in $[a,b]$. }  $\widetilde{b}^{i}\sim \operatorname{U}([0,1])$ for every $i\in\{1,\dots,d\}$ and $\sum_{i=1}^d|\widetilde{b}^{i}|\neq 0$; 
	\item [$\cdot$] $\sigma^o:= \widetilde{\sigma}/ (\sum_{l=1}^d(\sum_{k=1}^d\widetilde{\sigma}^{k,l})^2)^{1/2}$, where $\widetilde{\sigma}=(\widetilde{\sigma}^{k,l})_{k,l\in \{1,\dots,d\}}$ is a $d\times d$-valued random variable such that $\widetilde{\sigma}^{k,l}\sim \operatorname{U}([-1,1])$  for every $k,l\in \{1,\dots,d\}$, $\widetilde{\sigma}$ is invertible, and $(\sum_{l=1}^d(\sum_{k=1}^d$ $\widetilde{\sigma}^{k,l})^2)^{1/2}\neq 0$.
\end{itemize}

Under this setup, Assumptions \ref{as:objective} and \ref{as:sigma.inverse} are satisfied. Furthermore, since $f= \sin (\sum_{i=1}^dx_i) $ is bounded and Lipschitz continuous, by Remark \ref{rem:comparison}, Assumption \ref{as:comparison} is also satisfied. Hence, the corresponding viscosity solution of the nonlinear Kolmogorov PDE is unique. We further note that unlike the semilinear form of the PDE \eqref{eq:quasi_linear} where the boundary function $f$ is convex, the corresponding Kolmogorov PDE under this setup is fully nonlinear.

\begin{table}[t]
	{\footnotesize 
		\vspace{0.1cm}
		\begin{tabular}{ccc|cccccc}
			\toprule
			\multicolumn{3}{c|}{Dimension ($=d$)} & 1 & 5 & 10 & 20 & 50 & 100 \\
			\midrule
			\multicolumn{2}{c}{\multirow{3}{*}{$v^0$ }} & {Avg. } & {\bf 0.51033} & 0.51028 & 0.51030 & 0.51062 & 0.51018 & 0.51052 \\
			& & {Std.} & {0.00038} & {0.00025} & { 0.00020} & {0.00023} & {0.00032} & {0.00026} \\
			& & $\max |\mbox{error} |$  & $0.00085$ & 0.00053 & 0.00036 & 0.00077 & 0.00075 & 0.00063 \\
			\cmidrule{1-9}
			\multirow{9}{*}{$\partial_\varepsilon v^0$}  & \multirow{3}{*}{$(1,0)$}  &  {Avg. }  & {\bf 0.45018} & 1.00558 & 1.42421 & 2.01173 & 3.18120 & 4.49794 \\
			& &   {Std.}  & {0.00267} & { 0.00451} & {0.00977} & {0.00595} & {0.00869} & {0.01729} \\
			& &   $\max |\mbox{error} |$  & $0.00449$  & 0.00909 & 0.01751 & 0.01525 & 0.02036 & 0.04313\\
			\cmidrule{4-9}
			& \multirow{3}{*}{$(0,1)$}  & { Avg. }    & {\bf 0.55718} & 1.24767 & 1.76609 & 2.49299 & 3.94644 & 5.58518 \\
			& & {Std.}    & {0.00360} & {0.00506 } & {0.00940} & {0.00822} & {0.01327} &
			{0.02033} \\
			& & $\max |\mbox{error} |$   & 0.00729  & 0.00916 & 0.02340 & 0.01613 & 0.02719 & 0.04399 \\
			\cmidrule{4-9}
			& \multirow{3}{*}{ $(1,1)$ }  & {Avg. }  & {\bf 1.00736} &  2.25325 & 3.19029 & 4.50473 &  7.12765 & 10.0831  \\
			& & {Std.}  & {0.00217} & {0.00536} & {0.00679} & {0.00806 }& {0.01324 } &  {0.02257 } \\
			& & $\max |\mbox{error} |$   & 0.00390  & 0.01124 & 0.01379 & 0.01393 & 0.02740 & 0.05303 \\
			\midrule
			\multicolumn{3}{c|}{$\lambda_{\min}(\sigma^o)$ } & 1.00 & 0.21807 & 0.06382 & 0.01078 & 0.00235 & 0.00021 \\
			\midrule
			\multicolumn{3}{c|}{Runtime in Sec. (Avg.)}  & 14.338 &  49.329 &  89.643 & 168.205 & 393.547 & 807.328 \\
			\bottomrule
		\end{tabular}
		\caption{{\small Implementation of Algorithm \ref{alg:ours} for several dimension cases. The average (Avg.) and the standard deviation ({Std.})\;of estimators for $v^0$ and $\partial_\varepsilon v^0$ (with three cases $(\gamma,\eta)=(1,0)$,\;$(0,1)$,\;$(1,1)$), and the average runtime in seconds are computed across 10 independent runs of the \textsc{Python} code. Furthermore, `$\max |\operatorname{error}|$' denotes the maximum error of the estimators for $v^0$ and $\partial_\varepsilon v^0$ over the 10 runs, where the benchmark for these errors is the average value for the scaled solution of the $d=1$ case  
				according to \eqref{eq:univ0}-\eqref{eq:univ2}
				across the 10 runs.}} \label{table:result}
		\vspace{-\baselineskip}
	}
\end{table}

To obtain a reference solution, denote by ${\Gamma}=({\Gamma}_t)_{t\in[0,T]}$ a 1-dimensional process satisfying ${\Gamma}_t= t+{W}_t^1$, $t\in[0,T]$, where ${W}^1 $ is a standard 1-dimensional Brownian motion independent of $W$. Then, by the normalization in the parameters $b^o$ and $\sigma^o$, the following property holds: for every $d\in \mathbb{N}$,
\begin{align}\label{eq:univ0}
	\mbox{law of $\sum_{i=1}^dX^{o,i}$ } \; = \; \mbox{law of ${\Gamma}$. }
\end{align}
Combined with \eqref{eq:Feyn_1} and \eqref{eq:Feyn_2} (noting that $\nabla_x f=\cos(\sum_{i=1}^dx_i)  {\bf 1}_d$ and $\mathrm{J}_x f = -\sin(\sum_{i=1}^dx_i) {\bf 1}_{d\times d}$ where here ${\bf 1}_d$ denotes the $d$-dimensional vector with value 1 in all the components and ${\bf 1}_{d\times d}$ denotes the $d\times d$-matrix with value 1 in all the components), this ensures the following characterizations: for every $d\in \mathbb{N}$,
\begin{itemize}[leftmargin=2em]
	\item [$\cdot$] $v^0(0,0) = \mathbb{E}[\sin(\sum_{i=1}^dX_T^{o,i})] = \mathbb{E}[\sin(\Gamma_T)]$;
	\item [$\cdot$] 
	$\partial_\varepsilon v^0(0,0)=\mathbb{E}[ \int_0^T   |w(t, X_t^{o})| dt]$ (when $(\gamma,\eta)=(1,0)$) is characterized by
	\begin{align}\label{eq:univ1}
		\begin{aligned}
			&\mbox{$\mathbb{E}\left[ \int_0^T    \left|\left.{\mathbb{E}}\left[\cos\left(\sum_{i=1}^d \left(X_t^{o,i}+\widetilde{X}_T^{o,i}\right)\right) \right| \widetilde{X}_t^o = 0\right]{\bf 1}_d\;\right| dt \right]$}\\
			&\quad\mbox{$=\sqrt{d} \cdot  \mathbb{E}\left[ \int_0^T   \left|{\mathbb{E}}\left[\cos({\Gamma}_t+\widetilde{\Gamma}_T) \Big| \widetilde{\Gamma}_t = 0\right]\right| dt\right]$}, 
		\end{aligned}
	\end{align}  
	with $\widetilde{X}^o$ appearing in \eqref{eq:Feyn_1}, where we denote by $\widetilde{\Gamma}_t:=t + \widetilde{W}_t^1$, $t\in[0,T]$, a 1-dimensional process with a standard 1-dimensional Brownian motion $\widetilde{W}^1$ independent of $W^1$;
	\item [$\cdot$] 
	$\partial_\varepsilon v^0(0,0)=\mathbb{E}[ \int_0^T   \|\mathrm{J}_xw(t, X_t^{o})\sigma^o\|_{\operatorname{F}} dt]$ (when $(\gamma,\eta)=(0,1)$) is given by
	\begin{align} \label{eq:univ2} 
		\begin{aligned}
			&\mbox{$\mathbb{E}\left[ \int_0^T    \left\|{\mathbb{E}}\left[\sin\left(\sum_{i=1}^d\left(X_t^{o,i}+\widetilde{X}_T^{o,i}\right)\right)\Big|\widetilde{X}_t^o = 0\right] {\bf 1}_{d\times d}\;\sigma^o \right\|_{\operatorname{F}}dt \right]$}\\ 
			&\quad\mbox{$=\sqrt{d} \cdot  \mathbb{E}\left[ \int_0^T   \left|{\mathbb{E}}\left[\sin(\Gamma_t+\widetilde{\Gamma}_T)\Big|\widetilde{\Gamma}_t =0 \right]\right| dt\right]$}. 
		\end{aligned}
	\end{align}
\end{itemize} 
\noindent
Using the observations \eqref{eq:univ0}-\eqref{eq:univ2}, we can obtain reference solutions for every $d>1$ by calculating first the $d=1$ case and then scale it with 1 (for $v^o$) and $\sqrt{d}$ (for $\partial_\varepsilon v^0)$. 

Table \ref{table:result} shows the results of several dimension cases based on 10 independent runs of a \textsc{Python} code ({\tt Code{\_}4.ipynb}) given in the link provided in Footnote\;\ref{footnote:link}. For each dimension $d$, the coefficients $b^o\in \mathbb{R}^d$ and $\sigma^o\in \mathbb{R}^{d\times d}$ are once randomly generated (as explained above) and then kept for each of the 10 runs. The values for $v^0$ obtained from applying our Algorithm \ref{alg:ours} are basically invariant and equal to $0.5103$ ($\pm 0.0003$) over the dimension $d$ whereas the value for $\partial_\varepsilon v^0$ increases proportionally to $\sqrt{d}$ for all three cases $(\gamma,\eta)\in \{(1,0),(0,1),(1,1)\}$, which is consistent with our derivation in \eqref{eq:univ0}-\eqref{eq:univ2}. Furthermore, the average runtime results show that though the complexity $\mathfrak{C}(N,M_ 0,M_ 1,M_2)$ grows quadratically in the number of samples $M_ 1$ (as in this example we choose $M_1=M_2$; see Theorem\;\ref{thm:MC}\;(ii)), the computation for high dimensional cases (e.g., $d=50,100$) is still feasible under our 
algorithm.

\begin{figure}[t]
	\centering
	\subfigure{\label{fig013}\includegraphics[scale=0.28]{./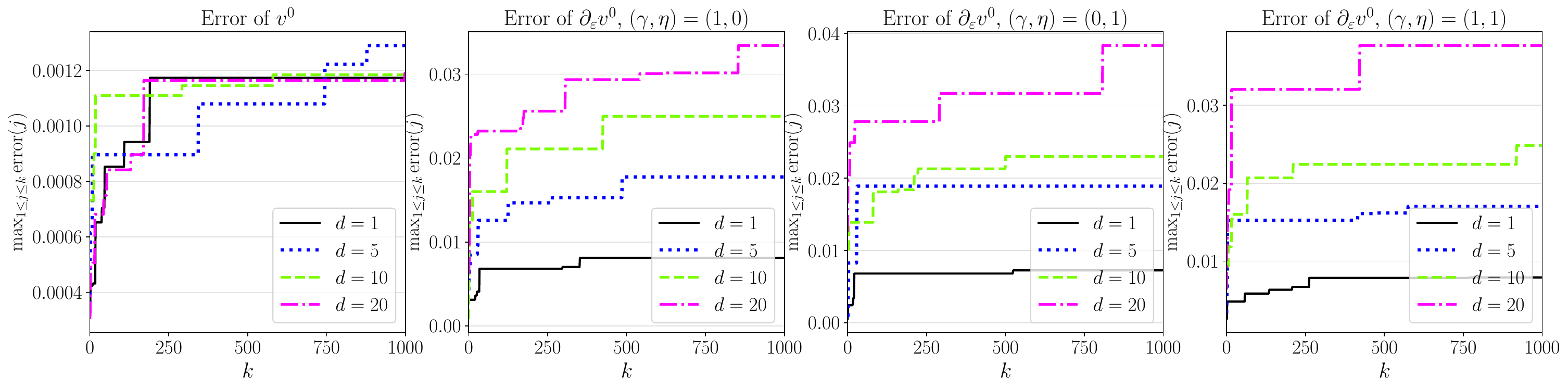}}
	\caption{
		{\small 
			The running maximum for the error between the approximation of $v^0$ and $\partial_{\varepsilon} v^0$ across $k$ runs with $k\in \{1,\dots,1000\}$, where the benchmark for these errors is the average value for the scaled solution for $d=1$ according to \eqref{eq:univ0}-\eqref{eq:univ2} across the 1,000 runs. $(b^o,\sigma^o)$ are generated randomly for every run and every~$d$.}}
	\label{fig01}
\end{figure}

%

Fig.\;\ref{fig01} shows the robustness of the estimators w.r.t.\ the input data / baseline coefficients  $b,\sigma$.
Indeed, we randomly generate $K=1000$ input parameters $(b^j,\sigma^j)_{j=1}^K$ and  plot the increasing function of $k\leq K$ that shows that maximum made error of the estimator over all $j\leq k$ input parameters.
This functions grows very modestly (seemingly logarithmic), showing robustness of the estimators.
\vspace{0.5em}

\section{Proof of Theorem \ref{thm:main}}\label{sec:thm:main}
We start by providing some notions. 
Let $t\in[0,T)$, denote by $C([t,T];\mathbb{R}^d)$  the set of all $\mathbb{R}^d$-valued continuous functions on $[t,T]$, and set 
\[
\Omega^{t}:=\{\omega=(\omega_s)_{s\in[t,T]}\in C([t,T];\mathbb{R}^d):\omega_t=0\}
\]
to be the canonical space of continuous paths. 
Let $W^t:=(W^t_s)_{s\in[t,T]}$ be the canonical process on~$\Omega^{t}$ and $\mathbb{F}^{W^t}:=({\cal F}_s^{W^t})_{s\in[t,T]}$ be the raw filtration generated by $W^t$. We equip $\Omega^{t}$ with the uniform convergence norm so that the Borel $\sigma$-field ${\cal F}^t$ on $\Omega^t$ coincides with ${\cal F}_T^{W^t}$. 
Furthermore, let $\mathbb{P}_0^t$ be the Wiener measure under which $W^t$ is a Brownian motion 
and write $\mathbb{E}^{\mathbb{P}_0^t}[\cdot]$ for the expectation under $\mathbb{P}_0^t$.

On $(\Omega^{t},{\cal F}^t,{\mathbb{F}}^{W^t},\mathbb{P}_0^t)$, consider ${X}^{t,x;o}:=({X}_s^{t,x;o})_{s\in[t,T]}$ following the baseline coefficients $b^o$ and~$\sigma^o$ and starting with $x\in\mathbb{R}^d$
, i.e.\;for $s\in[t,T]$, 
\begin{align}\label{eq:baseline_X}
	X_s^{t,x;o}=x+b^o(s-t)+\sigma^o W_s^t.
\end{align}
Moreover, let $\mathbb{L}^{t,1}(\mathbb{R}^d)$ and $\mathbb{L}^{t,1}_{\operatorname{F}}(\mathbb{R}^{d\times d})$ be the set of all ${\mathbb{F}}^{{W}^t}$-predictable processes $L$ defined on $[t,T]$ with values in $\mathbb{R}^d$ and $\mathbb{R}^{d\times d}$, respectively. 
We endow $\mathbb{L}^{t,1}(\mathbb{R}^d)$  with the norm 
$\|L\|_{\mathbb{L}^{t,1}}:=\mathbb{E}^{\mathbb{P}_0^t}[\int_t^T|L_s|ds]$ and $\mathbb{L}_{\operatorname{F}}^{t,1}(\mathbb{R}^{d\times d})$ with 
$\| L \|_{\mathbb{L}^{t,1}_{\operatorname{F}}}:=\mathbb{E}^{\mathbb{P}_0^t}[\int_t^T\|L_s\|_{\operatorname{F}}ds]$.

In analogy, we define  $\mathbb{L}^{t,\infty}(\mathbb{R}^d)$ as the set of all $\mathbb{R}^d$-valued, ${\mathbb{F}}^{{W}^t}$-predictable processes $L$ defined on $[t,T]$ that are bounded $\mathbb{P}_0^t\otimes ds$-a.e. and set $\|L \|_{\mathbb{L}^{t,\infty}}:= \inf\{C\geq 0: |L_s | \leq C\;\mbox{$\mathbb{P}_0^t\otimes ds$-a.e.}\}<\infty$.
The space  $\mathbb{L}_{\operatorname{F}}^{t,\infty}(\mathbb{R}^{d\times d})$ of $\mathbb{R}^{d \times d}$-valued processes is defined analogously to $\mathbb{L}^{t,\infty}(\mathbb{R}^d)$, with $|\cdot|$ replaced  by $\|\cdot\|_{\rm F}$ in the definition of~$\| L \|_{\mathbb{L}^{t,\infty}_{\rm F}}$.

For any $(b,\sigma)\in \mathbb{L}^{t,\infty}(\mathbb{R}^d)\times \mathbb{L}^{t,\infty}_{\operatorname{F}}(\mathbb{R}^{d\times d})$, we define an It\^o $({\mathbb{F}}^{{W}^t},\mathbb{P}^t_0)$-semimartingal $X^{t,x;b,\sigma}=(X^{t,x;b,\sigma}_s)_{s\in[t,T]}$ starting with $x\in\mathbb{R}^d$ by 
\begin{align}\label{eq:ito.semi.time}
	X_s^{t,x;b,\sigma} :=x+  \int_t^s b_u du  +\int_t^s \sigma_u d W^t_u,\quad s\in[t,T],
\end{align}
and note that $X_s^{t,x; o }=X_s^{t,x; b^o,\sigma^o }$ $s\in[t,T]$; see \eqref{eq:baseline_X}.
Moreover, for any $\varepsilon\geq 0$ and $t\in[0,T)$, denote by
\begin{align}\label{eq:char.strong}
	\mathcal{C}^{\varepsilon}(t)
	:=\left\{(b,\sigma)\in \mathbb{L}^{t,\infty}(\mathbb{R}^d)\times \mathbb{L}^{t,\infty}_{\operatorname{F}}(\mathbb{R}^{d\times d})~|~(b_s,\sigma_s)\in \mathcal{B}^\varepsilon\;\;\mbox{$\mathbb{P}_0^t\otimes ds$-a.e.}\right\}
\end{align}
the set of all ${\mathbb{F}}^{{W}^t}$-predictable processes taking values within the $\varepsilon$-neighborhood $\mathcal{B}^\varepsilon$ of the baseline coefficients $(b^o,\sigma^o)$ given in \eqref{eq:uncertain.char}.

Let us start by  providing some a priori estimates for  $X_T^{t,x;b,\sigma}$.

\begin{lem}\label{lem:priori_estimate} 
	For every $p\geq 1$, there is a constant $C_p>0$ s.t.~the following~holds:
	\begin{itemize}[leftmargin=2em]
		\item [(i)] For all $(t,x)\in[0,T)\times\mathbb{R}^d$, we have that $\mathbb{E}^{\mathbb{P}_0^t}[  | X_T^{t,x;o} |^p  ] \leq C_p(1+|x|^p)$.
		\item[(ii)] For all $\varepsilon \geq 0$ and $(t,x)\in [0,T)\times \mathbb{R}^d$, 
		$
		\sup_{(b,\sigma)\in {\cal C}^\varepsilon(t)}  \mathbb{E}^{\mathbb{P}_0^t}[  | X_T^{t,x;b,\sigma}-X_T^{t,x;o} |^p  ]
		\leq C_p   \varepsilon^p$.
	\end{itemize}
	
\end{lem}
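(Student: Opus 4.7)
My plan is to treat the two items separately, as each is essentially a standard a priori estimate for an It\^o semimartingale, and then to combine elementary inequalities (Minkowski, Jensen, and the Burkholder--Davis--Gundy inequality) to obtain the stated bounds with constants $C_p$ depending only on $p$, $T$, $|b^o|$, and $\|\sigma^o\|_{\operatorname{F}}$.

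For part (i), I would use the explicit representation
\[
X_T^{t,x;o}=x+b^o(T-t)+\sigma^o W_T^t
\]
together with Minkowski's inequality in $L^p(\mathbb{P}_0^t)$ to obtain
\[
\big(\mathbb{E}^{\mathbb{P}_0^t}[|X_T^{t,x;o}|^p]\big)^{1/p}\leq |x|+|b^o|T+\|\sigma^o\|_{\operatorname{F}}\big(\mathbb{E}^{\mathbb{P}_0^t}[|W_T^t|^p]\big)^{1/p}.
\]
Since $W_T^t$ is Gaussian with mean $0$ and covariance $(T-t)\mathrm{Id}$ under $\mathbb{P}_0^t$, its $p$-th moment is bounded by a constant times $T^{p/2}$. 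Raising the resulting bound to the $p$-th power yields $\mathbb{E}^{\mathbb{P}_0^t}[|X_T^{t,x;o}|^p]\leq C_p(1+|x|^p)$ after absorbing constants.

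For part (ii), I would write, for $(b,\sigma)\in\mathcal{C}^\varepsilon(t)$,
\[
X_T^{t,x;b,\sigma}-X_T^{t,x;o}=\int_t^T(b_u-b^o)\,du+\int_t^T(\sigma_u-\sigma^o)\,dW_u^t.
\]
The first integral is bounded pathwise by $\gamma\varepsilon(T-t)\leq\gamma\varepsilon T$, using the pointwise bound $|b_u-b^o|\leq\gamma\varepsilon$ coming from $(b_u,\sigma_u)\in\mathcal{B}^\varepsilon$. For the stochastic integral I apply the Burkholder--Davis--Gundy inequality to obtain
\[
\mathbb{E}^{\mathbb{P}_0^t}\!\left[\left|\int_t^T(\sigma_u-\sigma^o)\,dW_u^t\right|^p\right]\leq C_p\,\mathbb{E}^{\mathbb{P}_0^t}\!\left[\left(\int_t^T\|\sigma_u-\sigma^o\|_{\operatorname{F}}^2\,du\right)^{p/2}\right]\leq C_p(\eta\varepsilon)^pT^{p/2},
\]
using the a.e.\ bound $\|\sigma_u-\sigma^o\|_{\operatorname{F}}\leq\eta\varepsilon$. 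Taking $L^p$ norms and applying Minkowski's inequality on the left-hand side, combined with the supremum over $(b,\sigma)\in\mathcal{C}^\varepsilon(t)$, produces the bound $C_p\varepsilon^p$ (after redefining the constant).

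Neither step is genuinely hard: both are routine moment estimates. The only minor subtlety is making the constants uniform in $(t,x)$ and in $(b,\sigma)\in\mathcal{C}^\varepsilon(t)$, which is automatic once one uses the pointwise bounds defining $\mathcal{B}^\varepsilon$ together with BDG. Hence the main ``obstacle'' is merely bookkeeping of the constants, and the same $C_p$ (depending on $p$, $T$, $b^o$, $\sigma^o$, $\gamma$, $\eta$) works for both assertions.
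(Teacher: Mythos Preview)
Your proposal is correct and follows essentially the same route as the paper's proof: the same decomposition of the difference into a drift integral and a stochastic integral, the pointwise bound from the definition of $\mathcal{B}^\varepsilon$, and the Burkholder--Davis--Gundy inequality for the martingale part. The only cosmetic differences are that you combine the two pieces via Minkowski's inequality in $L^p$ while the paper uses the elementary bound $(a+b)^p\leq 2^p(a^p+b^p)$, and you track the factors $\gamma,\eta$ explicitly whereas the paper absorbs them into $\varepsilon$ (since $\gamma,\eta\leq 1$); neither affects the argument.
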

\begin{proof} 
	We only prove (ii), as the proof for (i) follows the same line of reasoning.

	Fix $\varepsilon\geq 0$ and $(t,x)\in [0,T)\times \mathbb{R}^d$, and let $(b,\sigma)\in {\cal C}^\varepsilon(t)$.
	Then
	\[X_T^{t,x;b,\sigma}-X_T^{t,x;o}
	= \int_t^T(b_s-b^o)ds + \int_t^T (\sigma_s-\sigma^o) dW^t_s.\]
	We estimate both terms separately.
	By Jensen's inequality and the definition of $ {\cal C}^\varepsilon(t)$,
	\[ \mathbb{E}^{\mathbb{P}_0^t}\bigg[ \Big| \int_t^T(b_s-b^o)ds \Big|^p \bigg]
	\leq \mathbb{E}^{\mathbb{P}_0^t}\bigg[ (T-t)^{p-1} \int_t^T  |b_s-b^o|^p ds \bigg]
	\leq (T-t)^{p} \varepsilon^p.\]
	Moreover, if $c_{{\rm BDG},p}>0$ denotes the constant appearing in the Burkholder-Davis-Gundy (BDG) inequality (see, e.g, \cite[Theorem 92, Chap. VII]{dellacherieprobabilities}), then 
	\begin{align*}
		\mathbb{E}^{\mathbb{P}_0^t}\bigg[ \Big| \int_t^T (\sigma_s-\sigma^o )dW^t_s\Big|^p \bigg] 
		&\leq c_{{\rm BDG},p} \mathbb{E}^{\mathbb{P}_0^t}\bigg[ \Big( \int_t^T\| \sigma_s-\sigma^o\|_{\operatorname{F}}^2ds  \Big)^{p/2} \bigg] 
		\\
		&\leq  c_{{\rm BDG},p} (T-t)^{p/2}  \varepsilon^p, 
	\end{align*}
	where the second inequality follows from the definition of $ {\cal C}^\varepsilon(t)$.
	Thus the proof is completed using the elementary inequality $(a+b)^p\leq 2^p (a^p+b^p)$ for all $a,b\geq 0$.
\end{proof}

\begin{rem}\label{rem:f.nablaf.integrable}	
	By Assumption \ref{as:objective}, the (weak) Hessian $D_x^2 f$ has at most polynomial growth of order $\alpha$.
	In particular, there is a constant $\widetilde{C}_f>0$ (that depends on $C_f$ in Assumption~\ref{as:objective}) such that for every $(x,y)\in \mathbb{R}^d$, 
	\[
	\left|f(y)-f(x)-\nabla_x^\top f(x) (x-y) \right|
	\leq \widetilde{C}_f\left(1 +|x|^\alpha+|y|^\alpha\right) \cdot |y-x|^2.
	\] 
	Moreover, $\nabla_x f$  and $f$ have at most polynomial growth of of order $\alpha+1$  and $\alpha+2$, respectively.
	
	Next note that if $g\colon\mathbb{R}^d\to\mathbb{R}$ is any function with at most polynomial growth, then Lemma \ref{lem:priori_estimate} implies that $g(X^{t,x;b,\sigma}_T)$ is integrable for every $(b,\sigma)\in\mathcal{C}^\varepsilon(t)$.
	Therefore $f(X^{t,x;b,\sigma}_T)$, $\partial_{x_i}f(X^{t,x;b,\sigma}_T)$,  $|\partial_{x_i}f(X^{t,x;b,\sigma}_T)|^2$, \ldots are integrable.
\end{rem}

\begin{lem}\label{lem:BSDE}
	Suppose that Assumptions \ref{as:objective} and \ref{as:sigma.inverse} are satisfied. For $i=1,\dots,d$, let $w^i:[t,T]\times\mathbb{R}^d\rightarrow \mathbb{R}$ be the unique classical solution of \eqref{eq:linear.pde.1stdev} with polynomial growth (see Proposition~\ref{pro:main}\;(ii)). Let $w: [t,T]\times \mathbb{R}^d\rightarrow \mathbb{R}^d$ 
	and $\mathrm{J}_x w: [t,T]\times \mathbb{R}^d\rightarrow \mathbb{R}^{d\times d}$ be given in \eqref{eq:vector.value.jacobi}, let $(t,x)\in[0,T]\times \mathbb{R}^d$, and set
	\begin{align}
		\label{eq:def.Y.Z}
		\qquad\quad  Y_s^{t,x} := w\left(s, X_s^{t,x;o}\right),\qquad
		Z_s^{t,x} := \mathrm{J}_x w\left(s, X_s^{t,x;o}\right) \sigma^o, \quad s\in[t,T].
	\end{align}
	Then, for every $i=1,\dots,d$, 
	\[
	Y^{t,x,i}_s = \partial_{x_i} f (X_T^{t,x;o})  -\int_s^T (Z^{t,x,i}_r)^\top dW^t_r,\qquad s\in[t,T],\]
	with $Y^{t,x,i}$ and $(Z^{t,x,i})^\top$ denoting the $i$-th component of $Y^{t,x}$ and the $i$-th row vector of~$Z^{t,x}$, respectively. In particular, $Y_s^{t,x}= \mathbb{E}^{\mathbb{P}_0^t}[\nabla_x f(X_T^{t,x,o})|{\cal F}_s^{W^t}]$ for every $s\in[t,T]$.
\end{lem}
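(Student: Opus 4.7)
The plan is to apply Itô's formula to the composition $w^i(s, X^{t,x;o}_s)$ and invoke the PDE \eqref{eq:linear.pde.1stdev} to eliminate the drift term, thereby obtaining the martingale representation that yields the claimed BSDE.

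First, since $w^i$ is a strong solution of \eqref{eq:linear.pde.1stdev} (guaranteed by Proposition~\ref{pro:main}(ii)), and $X^{t,x;o}_s = x+b^o(s-t)+\sigma^o W^t_s$ is an Itô process with constant coefficients, I would apply Itô's formula to the process $w^i(s,X^{t,x;o}_s)$ on $[t,T]$. This yields
\begin{align*}
dw^i(s,X^{t,x;o}_s)
&= \Bigl[\partial_s w^i + \langle b^o,\nabla_x w^i\rangle + \tfrac{1}{2}\operatorname{tr}\bigl(\sigma^o(\sigma^o)^\top D^2_x w^i\bigr)\Bigr](s,X^{t,x;o}_s)\,ds \\
&\quad + \nabla_x^\top w^i(s,X^{t,x;o}_s)\,\sigma^o\,dW^t_s.
\end{align*}
By \eqref{eq:linear.pde.1stdev}, the bracket in the $ds$-term vanishes, so the process reduces to its martingale part. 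Since the $i$-th row of $Z^{t,x}_s$ is exactly $(Z^{t,x,i}_s)^\top=\nabla_x^\top w^i(s,X^{t,x;o}_s)\sigma^o$, integration from $s$ to $T$ combined with the terminal condition $w^i(T,\cdot)=\partial_{x_i}f$ gives
\[
w^i(T,X^{t,x;o}_T) - w^i(s,X^{t,x;o}_s) = \int_s^T (Z^{t,x,i}_r)^\top\,dW^t_r,
\]
which rearranges to the BSDE form claimed in the lemma.

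To upgrade this to the conditional-expectation representation $Y^{t,x}_s=\mathbb{E}^{\mathbb{P}_0^t}[\nabla_x f(X_T^{t,x;o})\mid\mathcal{F}^{W^t}_s]$, I need the stochastic integral $\int_\cdot^T (Z^{t,x,i}_r)^\top dW^t_r$ to be a true martingale, not merely a local one. Here the key ingredients are: (a) the polynomial growth of $w^i$ (Proposition~\ref{pro:main}(ii)) together with interior Schauder/Krylov-type estimates for linear parabolic PDEs with constant coefficients, which propagate polynomial growth to $\nabla_x w^i$; and (b) the a priori moment bounds from Lemma~\ref{lem:priori_estimate}(i), which imply that $\nabla_x w^i(s,X^{t,x;o}_s)$ is in $L^2(\mathbb{P}_0^t\otimes ds)$. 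This integrability, combined with the boundedness of $\sigma^o$, guarantees $\int_t^T \|Z^{t,x,i}_r\|^2 dr \in L^1(\mathbb{P}_0^t)$, so the stochastic integral is a genuine martingale.

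The main obstacle is the regularity/growth justification in step (a): the excerpt only records \emph{polynomial growth of $w^i$ itself}, and one needs a corresponding bound on $\nabla_x w^i$ to carry out Itô's formula rigorously and to secure the martingale property. I would handle this either by invoking the probabilistic Feynman–Kac representation $w^i(s,y)=\mathbb{E}[\partial_{x_i} f(y+b^o(T-s)+\sigma^o(\widetilde W_T-\widetilde W_s))]$ for the constant-coefficient linear Kolmogorov PDE, differentiating under the expectation using the polynomial growth of $D^2_x f$ from Assumption~\ref{as:objective} together with Remark~\ref{rem:f.nablaf.integrable}, or by citing standard regularity results for linear parabolic PDEs with constant coefficients. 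Either path yields polynomial-growth control on $\nabla_x w^i$, after which the Itô calculation above and Lemma~\ref{lem:priori_estimate} close the argument and give $Y^{t,x}_s=\mathbb{E}^{\mathbb{P}_0^t}[\nabla_x f(X_T^{t,x;o})\mid\mathcal{F}^{W^t}_s]$ by taking conditional expectation of the BSDE.
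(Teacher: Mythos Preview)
Your proof is correct and follows essentially the same approach as the paper: apply It\^o's formula to $w^i(s,X^{t,x;o}_s)$, use the PDE \eqref{eq:linear.pde.1stdev} to kill the drift, and then take conditional expectations. The one place where the paper is more economical is the justification that $\int_t^\cdot (Z^{t,x,i}_r)^\top dW^t_r$ is a true martingale. You work to establish polynomial growth of $\nabla_x w^i$ so that $Z^{t,x,i}\in L^2(\mathbb{P}_0^t\otimes ds)$; the paper instead observes that $Y^{t,x,i}_s=w^i(s,X^{t,x;o}_s)$ already has polynomial growth in $X^{t,x;o}_s$ (this is part of the hypothesis on $w^i$), so together with the moment bounds of Lemma~\ref{lem:priori_estimate} the local martingale $M_s=Y^{t,x,i}_s-Y^{t,x,i}_t$ is bounded in $L^2$ and hence a true martingale, bypassing any gradient estimate.
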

\begin{proof}[Proof of Lemma \ref{lem:BSDE}]
	Fix $i\in\{1,\dots,d\}$.
	Since $w^i\in C^{1,2}([t,T)\times \mathbb{R}^d)$ (see Proposition \ref{pro:main}\;(ii) and Section~\ref{sec:proof:pro:main}), an application of It\^o's formula ensures that for~$s\in[t,T]$,
	\begin{align*}
		&w^i(T,X_T^{t,x;o}) - w^i(s,X_s^{t,x;o})
		=\int_s^T \nabla_x^\top w^i(r,X_r^{t,x;o})\;\sigma^o dW_r^{t} \\
		&+\int_s^T \bigg(\partial_rw^i(r,X_r^{t,x;o})+  \langle b^o, \nabla_x w^i (r,X_r^{t,x;o}) \rangle +  \frac{1}{2} \operatorname{tr} \left((\sigma^o) (\sigma^o)^\top D^2_{x}w^i(r,X_r^{t,x;o})\right) \bigg) dr.
	\end{align*}
	The  second integral is equal to zero because $w^i$ solves the linear Kolmogorov PDE given in~\eqref{eq:linear.pde.1stdev}.
	Therefore, using the boundary condition $w^i(T,\cdot) =\partial_{x_i}f(\cdot)$ and the definitions of $Y^{t,x}$ and $Z^{t,x}$ given in \eqref{eq:def.Y.Z}, we conclude that  for every $s\in[t,T]$,
	\begin{align}
		\label{eq:take.cond.exp}
		Y^{t,x,i}_s= \partial_{x_i}f(X_T^{t,x;o}) -\int_s^T (Z^{t,x,i}_u)^\top dW_u^{t},
	\end{align}
	as claimed.
	
	The `in particular' part follows by taking conditional expectations in \eqref{eq:take.cond.exp}.
	Indeed,  by Remark \ref{rem:f.nablaf.integrable}, $\partial_{x_i}f(X_T^{t,x;o})$ and $Y_t^{t,x}= w\left(t, x\right)$ are square integrable (because $\partial_{x_i}f$ and $w$ have polynomial growth).
	Therefore, it follows from {\eqref{eq:take.cond.exp}} that $\int_\cdot^T (Z^{t,x,i}_s)^\top dW_s^{t}$ is a square integrable martingale. 
	
	Furthermore, since $Y_s^{t,x,i}=w^i(s,X_s^{t,x;o})$ is ${\cal F}_s^{W^t}$-measurable, 
\begin{align*}
	Y^{t,x,i}_s 
	= \mathbb{E}^{\mathbb{P}_0^t} \left[ Y^{t,x,i}_s  \middle| {\cal F}_s^{W^t} \right]
	&= \mathbb{E}^{\mathbb{P}_0^t}\left[ \partial_{x_i}f(X_T^{t,x;o}) - \int_s^T (Z^{t,x,i}_r)^\top dW_r^{t} \middle|{\cal F}_s^{W^t} \right]\\
	&=\mathbb{E}^{\mathbb{P}_0^t}\left[ \partial_{x_i}f(X_T^{t,x;o}) \middle|{\cal F}_s^{W^t} \right],
\end{align*}
as claimed.
\end{proof}

For sufficiently integrable $\mathbb{R}^d$-valued processes $(L_s)_{s\in[t,T]}$\;and\;$(M_s)_{s\in[t,T]}$, define  $\langle L,M \rangle_{\mathbb{P}_0^t\otimes ds}$ $:= \mathbb{E}^{\mathbb{P}_0^t}[\int_t^T \langle L_s, M_s \rangle ds ]$.
In a similar manner, we set $\langle L,M \rangle_{\mathbb{P}_0^t\otimes ds, \operatorname{F}}:= \mathbb{E}^{\mathbb{P}_0^t}[\int_t^T \langle L_s, M_s \rangle_{\operatorname{F}} ds]$ for $\mathbb{R}^{d\times d}$-valued processes.

\begin{lem}\label{lem:inner} 
Suppose that Assumptions \ref{as:objective} and \ref{as:sigma.inverse} are satisfied and let $Y^{t,x}$,$Z^{t,x}$ be the processes defined in \eqref{eq:def.Y.Z}.
Then, for every  $\varepsilon \geq 0$ and $(b,\sigma)\in {\cal C}^\varepsilon(t)$, we have~that
\begin{align*}
	\mathbb{E}^{\mathbb{P}_0^t}\left[\nabla^\top_x f(X_T^{t,x;o}) \left(X_T^{t,x;b,\sigma}-X_T^{t,x;o}\right)\right] = \langle Y^{t,x}, b-b^o \rangle_{\mathbb{P}_0^t\otimes ds} + \langle Z^{t,x}, \sigma-\sigma^o \rangle_{\mathbb{P}_0^t\otimes ds, \operatorname{F}}.
\end{align*} 
\end{lem}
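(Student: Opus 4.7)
The plan is to expand $X_T^{t,x;b,\sigma}-X_T^{t,x;o}=\int_t^T(b_s-b^o)\,ds+\int_t^T(\sigma_s-\sigma^o)\,dW_s^t$ and distribute the inner product with $\nabla_x f(X_T^{t,x;o})$ over the Lebesgue part and the Itô part, working componentwise. For the $i$-th coordinate, the contribution of the drift is $\partial_{x_i}f(X_T^{t,x;o})\int_t^T(b_s^i-b^{o,i})\,ds$, while the diffusion contribution is $\partial_{x_i}f(X_T^{t,x;o})\int_t^T(\sigma_s^i-\sigma^{o,i})\,dW_s^t$, where $\sigma_s^i$ denotes the $i$-th row of $\sigma_s$. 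The key input from Lemma~\ref{lem:BSDE} is the pair of identities: (a) $Y_s^{t,x,i}=\mathbb{E}^{\mathbb{P}_0^t}[\partial_{x_i}f(X_T^{t,x;o})\mid \mathcal{F}_s^{W^t}]$; and (b) the martingale representation $\partial_{x_i}f(X_T^{t,x;o})=Y_t^{t,x,i}+\int_t^T(Z_r^{t,x,i})^\top dW_r^t$.

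For the drift term I would apply Fubini to interchange the Lebesgue integral and the expectation, then insert a conditional expectation on $\mathcal{F}_s^{W^t}$ (legal since $b_s-b^o$ is $\mathcal{F}_s^{W^t}$-measurable), and use (a) to replace $\mathbb{E}^{\mathbb{P}_0^t}[\partial_{x_i}f(X_T^{t,x;o})\mid\mathcal{F}_s^{W^t}]$ by $Y_s^{t,x,i}$; summing over $i$ produces $\langle Y^{t,x},b-b^o\rangle_{\mathbb{P}_0^t\otimes ds}$. For the diffusion term I would substitute (b), note that the deterministic piece $Y_t^{t,x,i}$ times a zero-mean Itô integral vanishes, and apply the Itô isometry to the remaining product of two scalar stochastic integrals. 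Writing this out in components gives $\mathbb{E}^{\mathbb{P}_0^t}[\int_t^T\sum_{j}(Z_r^{t,x})^{i,j}(\sigma_r^{i,j}-\sigma^{o,i,j})\,dr]$, and summing over $i$ recognises the Frobenius inner product $\langle Z_r^{t,x},\sigma_r-\sigma^o\rangle_{\operatorname{F}}$, delivering $\langle Z^{t,x},\sigma-\sigma^o\rangle_{\mathbb{P}_0^t\otimes ds,\operatorname{F}}$.

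The only mildly delicate point is justifying the integrability needed for Fubini and the Itô isometry. Since $(b,\sigma)\in\mathcal{C}^\varepsilon(t)$ is $\mathbb{P}_0^t\otimes ds$-essentially bounded, and $\partial_{x_i}f$ has polynomial growth by Assumption~\ref{as:objective} (so $\partial_{x_i}f(X_T^{t,x;o})\in L^p(\mathbb{P}_0^t)$ for every $p\geq 1$ by Lemma~\ref{lem:priori_estimate}(i) and Remark~\ref{rem:f.nablaf.integrable}), Cauchy--Schwarz and the BDG inequality take care of all the bounds; in particular the martingale $\int_\cdot^T(Z_r^{t,x,i})^\top dW_r^t$ is already known to be square integrable from the proof of Lemma~\ref{lem:BSDE}, which is exactly what the Itô isometry requires. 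Nothing conceptually hard remains; the main obstacle is purely bookkeeping, namely keeping the row/column conventions for $\sigma_s$ and $Z_s^{t,x}$ consistent so that the componentwise Itô isometry aggregates to the Frobenius pairing.
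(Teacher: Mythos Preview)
Your proposal is correct and follows essentially the same approach as the paper: componentwise decomposition into drift and diffusion contributions, Fubini plus the conditional-expectation identity $Y_s^{t,x,i}=\mathbb{E}^{\mathbb{P}_0^t}[\partial_{x_i}f(X_T^{t,x;o})\mid\mathcal{F}_s^{W^t}]$ for the drift, and the martingale representation from Lemma~\ref{lem:BSDE} together with the It\^o isometry (and the vanishing of the deterministic $Y_t^{t,x,i}$ against a mean-zero integral) for the diffusion. The integrability justifications you cite are exactly the ones the paper invokes.
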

\begin{proof}
For $i=1,\dots,d$, denote by  $(b_s^{i}-b^{o,i})_{s\in[t,T]}$ and $(\sigma_s^{i}-\sigma^{o,i})_{s\in[t,T]}$  the $i$-th component  of $b-b^o$ and $i$-th row vector  of $\sigma-\sigma^o$, respectively.
Using this notation,
\begin{align*}
	&\nabla^\top_x f(X_T^{t,x;o}) \left(X_T^{t,x;b,\sigma}-X_T^{t,x;o}\right) =\sum_{i=1}^d \left( \Xi^{b,i} +  \Xi^{\sigma,i} \right)\\
	&\qquad :=\sum_{i=1}^d \left( \partial_{x_i} f(X_T^{t,x;o}) \int_t^T(b_s^{i}-b^{o,i})ds  + \partial_{x_i} f(X_T^{t,x;o})\int_t^T(\sigma_s^{i}-\sigma^{o,i})dW_s^{t}\right).
\end{align*}
It follows from Remark \ref{rem:f.nablaf.integrable} that $ \Xi^{b,i},\Xi^{\sigma,i}$ are integrable (noting that $b-b^0$ and $\sigma-\sigma^0$ are bounded uniformly).
In particular, 
\[\mathbb{E}^{\mathbb{P}_0^t}\left[\nabla^\top_x f(X_T^{t,x;o}) (X_T^{t,x;b,\sigma}-X_T^{t,x;o}) \right ]
=\sum_{i=1}^d \left( \mathbb{E}^{\mathbb{P}_0^t}\left[\Xi^{b,i} \right]  +  \mathbb{E}^{\mathbb{P}_0^t}\left[\Xi^{\sigma,i} \right]  \right)\]
and  it remains to show that 
\[\mathbb{E}^{\mathbb{P}_0^t}[\Xi^{b,i}] = \langle Y^{t,x,i}, b^i-b^{o,i}\rangle_{\mathbb{P}_0^t\otimes ds}
\quad\text{and}\quad
\mathbb{E}^{\mathbb{P}_0^t}[\Xi^{\sigma,i}] = \langle Z^{t,x,i}, (\sigma^i-\sigma^{o,i})^\top\rangle_{\mathbb{P}_0^t\otimes ds}\]
for every $i=1,\dots,d$.
To that end, fix such $i$.

\vspace{0.5em}
We first claim that  $\mathbb{E}^{\mathbb{P}_0^t}[\Xi^{b,i}] = \langle Y^{t,x,i}, b^i-b^{o,i}\rangle_{\mathbb{P}_0^t\otimes ds}$.
Indeed, an application of Fubini's theorem shows that
\begin{align*}
	\mathbb{E}^{\mathbb{P}_0^t}\left[\Xi^{b,i}\right] 
	&= \int_t^T \mathbb{E}^{\mathbb{P}_0^t}\left[\mathbb{E}^{\mathbb{P}_0^t}\left[\partial_{x_i} f(X_T^{t,x;o})| {\cal F}_s^{W^t}\right](b_s^i-b^{o,i})\right]ds \\
	&=\mathbb{E}^{\mathbb{P}_0^t}\left[\int_t^T Y_s^{t,x,i}(b^i_s-b^{o,i})ds\right]
	= \langle Y^{t,x,i}, b^i-b^{o,i}\rangle_{\mathbb{P}_0^t\otimes ds},
\end{align*}
where  the second equality holds because $Y_s^{t,x,i}=\mathbb{E}^{\mathbb{P}_0^t}[\partial_{x_i} f(X_T^{t,x;o})| {\cal F}_s^{W^t}]$, $s\in[t,T]$; see Lemma~\ref{lem:BSDE}.

\vspace{0.5em}
Next, we claim that $ \mathbb{E}^{\mathbb{P}_0^t}[\Xi^{\sigma,i}]=\langle Z^{t,x,i}, (\sigma^i-\sigma^{o,i})^\top \rangle_{\mathbb{P}_0^t\otimes ds}.$
Indeed, by Lemma \ref{lem:BSDE}, 
\begin{align*}
		\mathbb{E}^{\mathbb{P}_0^t}\left[\Xi^{\sigma,i}\right] 
		&= \mathbb{E}^{\mathbb{P}_0^t}\left[\left( \int_t^T(Z_s^{t,x,i})^\top dW^t_s +Y_t^{t,x,i} \right)\int_t^T(\sigma_s^{i}-\sigma^{o,i})dW_s^{t}\right] .
\end{align*}
Further, by the It\^o-isometry,
\begin{align*}
	\mathbb{E}^{\mathbb{P}_0^t}\left[\int_t^T(Z_s^{t,x,i})^\top dW_s^t\int_t^T(\sigma_s^{i}-\sigma^{o,i})dW_s^{t}\right] &=\mathbb{E}^{\mathbb{P}_0^t}\left[\int_t^T(Z_s^{t,x,i})^\top(\sigma_s^{i}-\sigma^{o,i})^\top ds \right] \\
	&=\langle  Z^{t,x,i}, (\sigma^{i}-\sigma^{o,i})^\top \rangle_{\mathbb{P}_0^t\otimes ds},
\end{align*}
and since $Y_t^{t,x,i}$ is ${\cal F}_t^{W^t}$-measurable (see \eqref{eq:def.Y.Z} given in Lemma \ref{lem:BSDE}), $
\mathbb{E}^{\mathbb{P}_0^t}[ Y_t^{t,x,i}\int_t^T(\sigma_s^{i}-\sigma^{o,i})dW_s^{t} ] = 0$ by the martingale property of $W^t$.
This completes the proof.
\end{proof}

In Section \ref{sec:StrongWeak}, we shall show that if Assumptions \ref{as:objective}, \ref{as:sigma.inverse}, and \ref{as:comparison} are satisfied, then  the unique viscosity solution $v^\varepsilon$ of \eqref{eq:nonlinear.pde} satisfies the following:
For all $\varepsilon< \lambda_{\min}(\sigma^o)$  and $(t,x)\in [0,T)\times \mathbb{R}^d$, we have that 
\begin{align}\label{eq:strong_preview}
v^\varepsilon (t,x) = \sup_{(b,\sigma)\in \mathcal{C}^{\varepsilon}(t)}\mathbb{E}^{\mathbb{P}_0^t}\left[f\left(X^{t,x;b,\sigma}_T\right)\right],
\end{align}
with $v^\varepsilon(T,\cdot)=f(\cdot)$, see Lemma \ref{lem:weak_viscosity} and Proposition \ref{pro:equiv.weak.strong}. 
The formula for $v^\varepsilon$ given in \eqref{eq:strong_preview} will be crucial in the following proof.

\begin{lem}\label{lem:main_step1} 
Suppose that Assumptions \ref{as:objective}, \ref{as:sigma.inverse}, and \ref{as:comparison} are satisfied and, for every $(t,x)\in[0,T)\times \mathbb{R}^d$,  let $Y^{t,x}, Z^{t,x}$ be the processes defined in \eqref{eq:def.Y.Z}.
Moreover, let $\alpha$ be as in Assumption \ref{as:objective}.
Then, there exists a constant $c$ independent of $t,x,\varepsilon$ such that for every $\varepsilon<\min\{1,\lambda_{\min}(\sigma^0)\}$ and $(t,x)\in[0,T)\times \mathbb{R}^d$, we have that
\begin{align*}
	&\left| v^\varepsilon(t,x) - \left( v^0(t,x) + \sup_{(b,\sigma)\in {\cal C}^\varepsilon(t)}\left(\langle Y^{t,x}, b-b^o\rangle_{\mathbb{P}_0^t\otimes ds} + \langle Z^{t,x}, \sigma-\sigma^o\rangle_{\mathbb{P}_0^t\otimes ds, \operatorname{F}} \right) \right)\right| \\
	&\leq c (1+|x|^\alpha) \varepsilon^2.
\end{align*}
\end{lem}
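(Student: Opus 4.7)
The plan is to start from the stochastic control representation \eqref{eq:strong_preview} for $v^\varepsilon(t,x)$, perform a first-order Taylor expansion of $f$ around $X_T^{t,x;o}$ inside the expectation for each admissible control $(b,\sigma)\in\mathcal{C}^\varepsilon(t)$, identify the resulting first-order term via Lemma~\ref{lem:inner}, and then show that the Taylor remainder is uniformly $O(\varepsilon^2)$ after integration.

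Concretely, for each fixed $(b,\sigma)\in\mathcal{C}^\varepsilon(t)$ I would write
\[
f(X_T^{t,x;b,\sigma}) = f(X_T^{t,x;o}) + \nabla_x^\top f(X_T^{t,x;o})\bigl(X_T^{t,x;b,\sigma}-X_T^{t,x;o}\bigr) + R^{b,\sigma},
\]
take $\mathbb{E}^{\mathbb{P}_0^t}[\,\cdot\,]$, and use the fact that $v^0(t,x)=\mathbb{E}^{\mathbb{P}_0^t}[f(X_T^{t,x;o})]$ (the instance $(b,\sigma)=(b^o,\sigma^o)$ of \eqref{eq:strong_preview}). Lemma~\ref{lem:inner} rewrites the expectation of the linear term as $\langle Y^{t,x},b-b^o\rangle_{\mathbb{P}_0^t\otimes ds}+\langle Z^{t,x},\sigma-\sigma^o\rangle_{\mathbb{P}_0^t\otimes ds,\operatorname{F}}$, so that
\[
\mathbb{E}^{\mathbb{P}_0^t}\bigl[f(X_T^{t,x;b,\sigma})\bigr] = v^0(t,x) + \langle Y^{t,x},b-b^o\rangle_{\mathbb{P}_0^t\otimes ds} + \langle Z^{t,x},\sigma-\sigma^o\rangle_{\mathbb{P}_0^t\otimes ds,\operatorname{F}} + \mathbb{E}^{\mathbb{P}_0^t}[R^{b,\sigma}].
\]

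For the remainder, Remark~\ref{rem:f.nablaf.integrable} provides the pointwise bound
\[
|R^{b,\sigma}| \leq \widetilde{C}_f\bigl(1 + |X_T^{t,x;o}|^\alpha + |X_T^{t,x;b,\sigma}|^\alpha\bigr)\bigl|X_T^{t,x;b,\sigma}-X_T^{t,x;o}\bigr|^2.
\]
I would then apply Cauchy-Schwarz, estimate $\mathbb{E}^{\mathbb{P}_0^t}[|X_T^{t,x;b,\sigma}-X_T^{t,x;o}|^4]\leq C_4\varepsilon^4$ via Lemma~\ref{lem:priori_estimate}(ii), and control the polynomial factor by combining the triangle inequality $|X_T^{t,x;b,\sigma}|\leq |X_T^{t,x;o}|+|X_T^{t,x;b,\sigma}-X_T^{t,x;o}|$ with Lemma~\ref{lem:priori_estimate}(i)-(ii), using $\varepsilon<1$ to absorb the extra $\varepsilon^{2\alpha}$ factors. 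This yields $|\mathbb{E}^{\mathbb{P}_0^t}[R^{b,\sigma}]| \leq c(1+|x|^\alpha)\varepsilon^2$ uniformly in $(b,\sigma)\in\mathcal{C}^\varepsilon(t)$.

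Finally, since $v^0(t,x)$ does not depend on the control, taking $\sup_{(b,\sigma)\in\mathcal{C}^\varepsilon(t)}$ in the decomposition and applying the elementary inequality $\bigl|\sup_a(A_a+B_a)-\sup_a A_a\bigr|\leq \sup_a|B_a|$ with $A_{b,\sigma}$ the linear term and $B_{b,\sigma}=\mathbb{E}^{\mathbb{P}_0^t}[R^{b,\sigma}]$ delivers the claim. I expect the main obstacle to be the uniform remainder bound: Cauchy-Schwarz must be balanced so that the parasitic $\varepsilon^{2\alpha}$ coming from $|X_T^{t,x;b,\sigma}-X_T^{t,x;o}|^{2\alpha}$ inside the polynomial factor remains harmless (this is where $\varepsilon<1$ is used), while the $\varepsilon^4$ from the fourth moment of $|X_T^{t,x;b,\sigma}-X_T^{t,x;o}|$ produces, after the square root, the sharp $\varepsilon^2$ rate with the clean prefactor $(1+|x|^\alpha)$ rather than a higher power.
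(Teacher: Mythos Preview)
Your proposal is correct and follows essentially the same approach as the paper: Taylor expand $f$ around $X_T^{t,x;o}$, bound the remainder via Remark~\ref{rem:f.nablaf.integrable} combined with Cauchy--Schwarz and the moment estimates of Lemma~\ref{lem:priori_estimate} (using $\varepsilon<1$ exactly as you anticipate), and identify the linear term through Lemma~\ref{lem:inner}. The only cosmetic difference is that the paper postpones the application of Lemma~\ref{lem:inner} to the very last line, first establishing the bound with $\sup_{(b,\sigma)}\mathbb{E}^{\mathbb{P}_0^t}[\nabla_x^\top f(X_T^{t,x;o})(X_T^{t,x;b,\sigma}-X_T^{t,x;o})]$ in place of the $Y,Z$-expression.
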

\begin{proof}
Fix $\varepsilon$ as in the lemma and recall the  formula for $v^\varepsilon$ given in~\eqref{eq:strong_preview};
in particular $v^0(t,x) = \mathbb{E}^{\mathbb{P}_0^t}[f(X^{t,x;o}_T)].$

Next, using Remark \ref{rem:f.nablaf.integrable},  for any $(b,\sigma)\in \mathcal{C}^\varepsilon(t)$,
\begin{align*}	
	&\left|f\big(X_T^{t,x;b,\sigma}\big)- f\big(X_T^{t,x;o}\big) -\nabla_x^\top f\big(X_T^{t,x;o}\big)\left(X_T^{t,x;b,\sigma}-X_T^{t,x;o}\right)\right| \\
	&\quad \leq  \widetilde{C}_f \cdot \left( 1+ \big|X_T^{t,x;b,\sigma} \big|^\alpha + \left|X_T^{t,x;o} \right|^\alpha \right)\cdot \left| X_T^{t,x;b,\sigma}-X_T^{t,x;o}\right|^2
	=: {\rm I}^{b,\sigma}.
\end{align*}

We claim that there is $c>0$ that depends only on $\alpha,\widetilde{C}_f$ (see Remark \ref{rem:f.nablaf.integrable})
such~that
$\sup_{(b,\sigma)\in\mathcal{C}^\varepsilon(t)}\mathbb{E}^{\mathbb{P}_0^t}[{\rm I}^{b,\sigma} ] 
\leq c (1 + |x|^{\alpha}) \varepsilon^2.$
To that end, an application of the Cauchy-Schwartz inequality together with the elementary inequality $(1+a+b)^2 \leq 3^2(1+a^2+b^2)$ for all $a,b\geq0$ shows that 		
\begin{align*}
	\mathbb{E}^{\mathbb{P}_0^t}\left[ {\rm I}^{b,\sigma} \right]
	&\leq \widetilde{C}_f 3 \cdot \mathbb{E}^{\mathbb{P}_0^t}\left[  1+ \big|X_T^{t,x;b,\sigma} \big|^{2\alpha} + \left|X_T^{t,x;o} \right|^{2\alpha} \right]^{1/2}
	\mathbb{E}^{\mathbb{P}_0^t}\Big[\big| X_T^{t,x;b,\sigma}-X_T^{t,x;o}\big|^4 \Big]^{1/2}.
\end{align*}
Moreover, we have by Lemma \ref{lem:priori_estimate} that
\[\sup_{(b,\sigma)\in\mathcal{C}^\varepsilon(t)}\mathbb{E}^{\mathbb{P}_0^t}\Big[\big| X_T^{t,x;b,\sigma}-X_T^{t,x;o}\big|^4 \Big]^{1/2} 
\leq C_4^{1/2}\varepsilon^2,\]
where $C_4$ is the constant appearing in Lemma \ref{lem:priori_estimate}.
Furthermore, as $\varepsilon<1$, another application of Lemma \ref{lem:priori_estimate} together with the inequality $(a+b)^{2\alpha}\leq 2^{2\alpha}(a^{2\alpha}+b^{2\alpha})$ for all $a,b\geq 0$ implies~that 
\begin{align*}
	\begin{aligned}
		&\sup_{(b,\sigma)\in\mathcal{C}^\varepsilon(t)} \mathbb{E}^{\mathbb{P}_0^t}\left[  1+ \big|X_T^{t,x;b,\sigma} \big|^{2\alpha} + \left|X_T^{t,x;o} \right|^{2\alpha} \right]^{\frac{1}{2}}\\
		&\quad \leq \sup_{(b,\sigma)\in\mathcal{C}^\varepsilon(t)} \left( 1+2^{2\alpha} \mathbb{E}^{\mathbb{P}_0^t}\left[\big|X_T^{t,x;b,\sigma}- X_T^{t,x;o} \big|^{2\alpha}\right] + (2^{2\alpha}+1 )\mathbb{E}^{\mathbb{P}_0^t}\left[\big| X_T^{t,x;o} \big|^{2\alpha}\right] \right)^{\frac{1}{2}}\\
		&\quad \leq  \Big( 1+2^{2\alpha} C_{2\alpha} + (2^{2\alpha}+1)C_{2\alpha}(1+|x|^{2\alpha})  \Big)^{\frac{1}{2}} \leq \left(1+2^{2\alpha+1}C_{2\alpha} +C_{2\alpha}\right)^{\frac{1}{2}} (1+|x|^{\alpha}),
	\end{aligned}
\end{align*}
where $C_{2\alpha}$ is the constant appearing in Lemma \ref{lem:priori_estimate}. Our claim follows by setting $c:=\widetilde{C}_f 3 C_4^{1/2} (1+2^{2\alpha+1}C_{2\alpha} +C_{2\alpha})^{{1}/{2}}$.  

Finally, combining all the previous estimates we conclude that
\begin{align*}
	&\left| v^\varepsilon(t,x) - \left( v^0(t,x)
	+\sup_{(b,\sigma)\in {\cal C}^\varepsilon(t)} \mathbb{E}^{\mathbb{P}_0^t}\left[ \nabla_x^\top f\big(X_T^{t,x;o}\big)\left(X_T^{t,x;b,\sigma}-X_T^{t,x;o}\right)\right] \right)\right|\\
	&\leq c(1+|x|^\alpha) \varepsilon^2.
\end{align*}
Thus, the proof is completed by using \eqref{eq:strong_preview} and applying Lemma~\ref{lem:inner}.
\end{proof}

\begin{lem}\label{lem:main_step2} 
Suppose that Assumptions \ref{as:objective} and \ref{as:sigma.inverse} are satisfied.
Then, for every fixed $(t,x)\in[0,T)\times \mathbb{R}^d$ and $\varepsilon\geq 0$,
\begin{align*}
	\begin{split}
		\Phi:=&\sup_{(b,\sigma)\in {\cal C}^\varepsilon(t)}\left(\langle Y^{t,x}, b-b^o\rangle_{\mathbb{P}_0^t\otimes ds} + \langle Z^{t,x}, \sigma - \sigma^o\rangle_{\mathbb{P}_0^t\otimes ds,\operatorname{F}}\right)\\
		&= \varepsilon\cdot \left(\gamma \|Y^{t,x} \|_{\mathbb{L}^{t,1}}+\eta \|Z^{t,x} \|_{\mathbb{L}_{\operatorname{F}}^{t,1}}\right) 
		=:\Psi .
	\end{split}
\end{align*}
\end{lem}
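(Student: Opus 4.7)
The plan is to decouple the joint supremum into two independent maximizations (over the drift and diffusion perturbations) and then resolve each by a pointwise duality argument. Observe first that by the definition of $\mathcal{B}^\varepsilon$ given in~\eqref{eq:uncertain.char}, the set $\mathcal{C}^\varepsilon(t)$ has a product structure: $(b,\sigma)\in\mathcal{C}^\varepsilon(t)$ if and only if $|b_s(\omega)-b^o|\leq \gamma\varepsilon$ and $\|\sigma_s(\omega)-\sigma^o\|_{\operatorname{F}}\leq \eta\varepsilon$ hold $\mathbb{P}_0^t\otimes ds$-a.e.\ independently. Therefore the supremum splits, and it suffices to establish separately the two identities
\[
\sup_{b}\langle Y^{t,x},b-b^o\rangle_{\mathbb{P}_0^t\otimes ds} = \gamma\varepsilon\|Y^{t,x}\|_{\mathbb{L}^{t,1}},\qquad \sup_{\sigma}\langle Z^{t,x},\sigma-\sigma^o\rangle_{\mathbb{P}_0^t\otimes ds,\operatorname{F}} = \eta\varepsilon\|Z^{t,x}\|_{\mathbb{L}^{t,1}_{\operatorname{F}}},
\]
where the suprema range over predictable, essentially bounded processes with values in the respective closed balls.

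For the upper bounds, I would apply the Cauchy--Schwarz inequality pointwise in $(\omega,s)$ (with the Euclidean and Frobenius inner products, respectively), yielding
\[
\langle Y^{t,x}_s,b_s-b^o\rangle \leq \gamma\varepsilon\, |Y^{t,x}_s|,\qquad \langle Z^{t,x}_s,\sigma_s-\sigma^o\rangle_{\operatorname{F}} \leq \eta\varepsilon\,\|Z^{t,x}_s\|_{\operatorname{F}},
\]
and then integrate with respect to $\mathbb{P}_0^t\otimes ds$ to obtain the two upper bounds. For matching lower bounds, the plan is to exhibit the explicit pointwise maximizers
\[
b^\star_s(\omega) := b^o + \gamma\varepsilon\,\frac{Y^{t,x}_s(\omega)}{|Y^{t,x}_s(\omega)|}\mathbf{1}_{\{|Y^{t,x}_s(\omega)|>0\}},\qquad \sigma^\star_s(\omega) := \sigma^o + \eta\varepsilon\,\frac{Z^{t,x}_s(\omega)}{\|Z^{t,x}_s(\omega)\|_{\operatorname{F}}}\mathbf{1}_{\{\|Z^{t,x}_s(\omega)\|_{\operatorname{F}}>0\}},
\]
for which pointwise equality holds in the Cauchy--Schwarz estimates; direct substitution then attains the stated value.

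The only delicate point, and the main (though mild) obstacle, is verifying that $(b^\star,\sigma^\star)\in\mathcal{C}^\varepsilon(t)$. Uniform boundedness is immediate from the construction, since the normalized vectors have Euclidean/Frobenius norm at most one. For predictability, I would invoke Proposition~\ref{pro:main}(ii): each $w^i\in C^{1,2}([t,T)\times\mathbb{R}^d)$, so $Y^{t,x}_s=w(s,X_s^{t,x;o})$ and $Z^{t,x}_s=\mathrm{J}_x w(s,X_s^{t,x;o})\sigma^o$ are continuous in $s$ and $\mathbb{F}^{W^t}$-adapted, hence predictable. The normalizations $y\mapsto y/|y|$ and $M\mapsto M/\|M\|_{\operatorname{F}}$ are Borel measurable on the respective nonzero sets, so predictability is preserved after multiplication by the indicators. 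This confirms $(b^\star,\sigma^\star)\in\mathcal{C}^\varepsilon(t)$ and completes the proof.
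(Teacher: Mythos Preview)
Your argument is correct and follows essentially the same route as the paper's proof: splitting the supremum via the product structure of $\mathcal{C}^\varepsilon(t)$, applying Cauchy--Schwarz for the upper bound, and attaining the lower bound with the normalized pointwise maximizers $b^\star,\sigma^\star$. If anything, your explicit verification of predictability of $(b^\star,\sigma^\star)$ via the $C^{1,2}$ regularity of $w$ is slightly more detailed than what the paper spells out.
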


    The proof of Lemma \ref{lem:main_step2} follows from the norm duality between $\mathbb{L}^{t,\infty}$ and $\mathbb{L}^{t,1}$, together with the definition of $\mathcal{C}^\varepsilon(t)$.
    We present it for completeness.

\begin{proof}
Fix $\varepsilon$.
We first claim that $\Phi\leq \Psi$.
To that end, set $\mathcal{C}^\varepsilon_1(t):= \{b:(b,\sigma)\in {\cal C}^\varepsilon(t)\}$, $\mathcal{C}^\varepsilon_2(t):= \{\sigma :(b,\sigma)\in {\cal C}^\varepsilon(t)\}$ so that $\mathcal{C}^\varepsilon(t)=\mathcal{C}^\varepsilon_1(t)\times \mathcal{C}^\varepsilon_2(t)$.
Using the Cauchy-Schwartz inequality in $\mathbb{R}^d$ and  H\"older's inequality (with exponents 1 and $\infty$),
\begin{align*}
	\begin{aligned}
		\sup_{b\in {\cal C}^\varepsilon_1(t)}\langle Y^{t,x}, b-b^o\rangle_{\mathbb{P}_0^t\otimes ds}&\leq \sup_{b\in {\cal C}^\varepsilon_1(t)}\mathbb{E}^{\mathbb{P}_0^t}\left[\int_t^T|Y_s^{t,x}| |b_s-b^o|ds \right]
		\leq  \|Y^{t,x}\|_{\mathbb{L}^{t,1}}\;\varepsilon \gamma .
	\end{aligned}
\end{align*}
In a similarly manner,
\begin{align*}
	\begin{aligned}
		\sup_{\sigma\in {\cal C}^\varepsilon_2(t)} \langle Z^{t,x}, \sigma - \sigma^o\rangle_{\mathbb{P}_0^t\otimes ds,\operatorname{F}}\leq \sup_{\sigma\in {\cal C}^\varepsilon_2(t)}\mathbb{E}^{\mathbb{P}_0^t}\left[\int_t^T\|Z_s^{t,x}\|_{\operatorname{F}} \|\sigma_s-\sigma^o\|_{\operatorname{F}}ds \right]
		\leq \|Z^{t,x}\|_{\mathbb{L}_{\operatorname{F}}^{t,1}} \;\varepsilon \eta .
	\end{aligned}
\end{align*}
The combination of these two estimates shows that indeed  $\Phi\leq \Psi$.

\vspace{0.5em}
Next we claim that  $\Phi\geq \Psi$.
Define $\tilde{\sigma}^*\in\mathbb{L}_{\operatorname{F}}^{t,\infty}$ by
\begin{align*}
	\tilde{\sigma}^*_s
	:=  \left\{
	\begin{aligned}
		&Z_s^{t,x} / \| Z_s^{t,x}\|_{\operatorname{F}}\;\; &&\mbox{if}\quad\|Z^{t,x}_s\|_{\operatorname{F}}> 0; 
		\\
		&\;0\quad &&\mbox{else},
	\end{aligned}
	\right.
\end{align*}
which satisfies $\|\tilde{\sigma}^*\|_{\mathbb{L}_{\operatorname{F}}^{t,\infty}}\leq 1$ and  $
\langle Z_s^{t,x} , \tilde{\sigma}_s^* \rangle_{\operatorname{F}}
= \|Z_s^{t,x}\|_{\operatorname{F}}$. 
This implies that
\begin{equation}\label{eq:sig_ineq_op}
	\|Z^{t,x}\|_{\mathbb{L}_{\operatorname{F}}^{t,1}}
	=\mathbb{E}^{\mathbb{P}_0^t}\left[\int_t^T \langle Z_s^{t,x}, \tilde{\sigma}_s^*\rangle_{\operatorname{F}}ds \right] =\langle Z^{t,x}, \tilde{\sigma}^* \rangle_{\mathbb{P}_0^t\otimes ds,\operatorname{F}}. 
\end{equation}
In a similar manner,  we can construct some $\tilde{b}^*\in \mathbb{L}^{t,\infty}(\mathbb{R}^d)$ satisfying $\| \tilde{b}^* \|_{\mathbb{L}^{t,\infty}}\leq 1$~and
\begin{align}\label{eq:b_ineq_op}
	\|Y^{t,x} \|_{\mathbb{L}^{t,1}} 
	=\mathbb{E}^{\mathbb{P}_0^t}\left[\int_t^T \langle Y_s^{t,x}, \tilde{b}_s^*\rangle ds \right] = \langle Y^{t,x}, \tilde{b}^* \rangle_{\mathbb{P}_0^t\otimes ds}.
\end{align}

Now define $(b^{\ast},\sigma^{\ast})
:= (b^o+\varepsilon \gamma \tilde{b}^*,\sigma^o+\varepsilon \eta \tilde{\sigma}^* ) \in {\cal C}^\varepsilon(t).$
Then, by  \eqref{eq:b_ineq_op} and \eqref{eq:sig_ineq_op},
\begin{align*}
		\Phi&\geq \langle Y^{t,x}, b^{\ast}-b^o\rangle_{\mathbb{P}_0^t\otimes  ds} + \langle Z^{t,x}, \sigma^{\ast} - \sigma^o\rangle_{\mathbb{P}_0\otimes_t ds,\operatorname{F}}\\
		&=  \varepsilon \cdot \left( \gamma \langle Y^{t,x}, \tilde{b}^{*}\rangle_{\mathbb{P}_0^t\otimes  ds} +\eta \langle Z^{t,x}, \tilde{\sigma}^* \rangle_{\mathbb{P}_0^t\otimes ds,\operatorname{F}} \right)\\
		&= \varepsilon \cdot \left(\gamma \|Y^{t,x} \|_{\mathbb{L}^{t,1}}+\eta \|Z^{t,x} \|_{\mathbb{L}_{\operatorname{F}}^{t,1}}\right) = \Psi.  \qedhere
\end{align*}
\end{proof}

\begin{proof}[Proof of Theorem \ref{thm:main}]
Fix  $(t,x)\in[0,T)\times \mathbb{R}^d$ and let $(Y^{t,x},Z^{t,x})$ be the processes defined in~\eqref{eq:def.Y.Z}, that is, 
$Y_s^{t,x} = w(s, X_s^{t,x;o})$ and $
Z_s^{t,x} = \mathrm{J}_xw(s, X_s^{t,x;o}) \sigma^o$ for $s\in[t,T]$.
Then, by Lemmas \ref{lem:main_step1} and \ref{lem:main_step2}, 
we have for every $\varepsilon<\min\{1,\lambda_{\min}(\sigma^0)\}$ that  
\begin{align*}
\left| v^\varepsilon(t,x)
-\Big(v^0(t,x) +  \varepsilon\cdot  \big(\gamma \|Y^{t,x} \|_{\mathbb{L}^{t,1}}+\eta \|Z^{t,x} \|_{\mathbb{L}_{\operatorname{F}}^{t,1}}\big) \Big) \right| \leq  c (1+|x|^\alpha) \varepsilon^2,
\end{align*} 
where $c>0$ is the constant (that is independent of $t,x,\varepsilon$) appearing in Lemma \ref{lem:main_step1}. The proof follows from the definitions of the norms on $\mathbb{L}^{t,1}(\mathbb{R}^d)$ and $\mathbb{L}^{t,1}_{\rm F}(\mathbb{R}^{d\times d})$, and since the law of $(X^{t,x,o}_s)_{s\in[t,T]}$ under $\mathbb{P}^t_0$ is equal to the conditional law of $(x+X^o_s)_{s\in[t,T]}$ under $\mathbb{P}$ given $X_t^o=0$, where $(X^o_t)_{t\in[0,T]}$ is the process defined in Theorem \ref{thm:main}.
\end{proof}

\section{Weak and strong formulation of nonlinear Kolmogorov PDE}\label{sec:Weak}
\subsection{Semimartingale measures}\label{sec:prelimi_weak}
In this section we adopt a framework for semimartingale uncertainty introduced by \cite{NeufeldNutz2014,neufeld2017nonlinear}. For any $(t,x)\in [0,T)\times \mathbb{R}^d$, denote~by 
\[
\Omega^{t,x}:=\left\{\omega=(\omega_s)_{s\in[t,T]}\in C([t,T];\mathbb{R}^d):\omega_t=x\right\}
\]
under which $X^{t}:=(X^t_s)_{s\in[t,T]}$ is the corresponding canonical process starting in $x$. Furthermore, let $\mathbb{F}^{X^t}:=({\cal F}_s^{X^t})_{s\in[t,T]}$ be the raw filtration generated by~$X^t$. 
{We equip $\Omega^{t,x}$ with the uniform norm $\| \omega \|_{t,\infty}:=\max_{t\leq s\leq T} | \omega_s |$ so that the Borel $\sigma$-field on $\Omega^{t,x}$ coincides with ${\cal F}_T^{X^t}$.}  

For $t=0$, we set for shorthand notation $X:=X^0$ and $\| \omega \|_{\infty}:=\| \omega \|_{0,\infty}$.
Denote by ${\cal P}( \Omega^{0,x})$ the set of all Borel probability measures on $ \Omega^{0,x}$. For each $p\in \mathbb{N}$, set 
\begin{align}\label{eq:p.msr.set}
{\cal P}^p( \Omega^{0,x}):= \left\{\mathbb{P}\in{\cal P}( \Omega^{0,x})~\Big|~\int_{\Omega^{0,x}} \|\omega\|_{\infty}^p \mathbb{P}(d \omega)<\infty \right\}
\end{align}
to be the subset of all Borel probability measures on $\Omega^{0,x}$ with finite $p$-th moment. Furthermore, let $C(\Omega^{0,x};\mathbb{R})$ be the set of all continuous functions from $\Omega^{0,x}$ to $\mathbb{R}$ and~set
\begin{align}\label{eq:p.mmt.set}
C_p(\Omega^{0,x};\mathbb{R}):= \left\{\xi\in C(\Omega^{0,x};\mathbb{R})~\Big|~\|\xi\|_{C_p}:=\sup_{\omega\in \Omega^{0,x}} \frac{|\xi(\omega)|}{1+\|\omega\|_{\infty}^p}<\infty\right\}.
\end{align}
We equip ${\cal P}^p( \Omega^{0,x})$  with the  topology $\tau_p$ defined as follows: for any $\mathbb{P}\in {\cal P}^p( \Omega^{0,x})$ and $(\mathbb{P}^n)_{n\in \mathbb{N}}\subseteq {\cal P}^p( \Omega^{0,x})$, we have 
\begin{align}\label{eq:top.tau.wass}
\mathbb{P}^n \xrightarrow[]{\tau_p} \mathbb{P}\quad \mbox{as}\;\; n\rightarrow \infty\quad \Leftrightarrow \quad  \lim_{n\rightarrow \infty }\mathbb{E}^{\mathbb{P}^n}\left[\xi\right] =  \mathbb{E}^{\mathbb{P}}\left[\xi\right]\quad \mbox{for all} \;\; \xi \in C_p(\Omega^{0,x};\mathbb{R}).
\end{align}
Note that $\tau_p$ is the topology induced by the $p$-Wasserstein distance, see, e.g., \cite{villani2008optimal}.

Recalling the set $\mathbb{S}^d$ of all symmetric $d\times d$ matrices, denote by $\mathbb{S}^d_+\subset \mathbb{S}^d$ the subset of all positive semi-definite matrices. Let ${\cal P}_{\operatorname{sem}}$ be the set of all ${\mathbb{P}}\in {\cal P}(\Omega^{0,x})$ such that $X$ is a semimartingale on $(\Omega^{0,x}, {\cal F}^{0,x},\mathbb{F}^{X},{\mathbb{P}})$. Moreover, let ${\cal P}_{\operatorname{sem}}^{\operatorname{ac}}$ be the subset of all ${\mathbb{P}}\in {\cal P}_{\operatorname{sem}}$ such that $\mathbb{P}$-a.s.
$
B^{{\mathbb{P}}}\ll ds$ and 
$C^{{\mathbb{P}}}\ll ds,
$
where $B^{{\mathbb{P}}}$ and $C^{{\mathbb{P}}}$ denote the finite variation part and quadratic covariation of the local martingale part of $X$ under ${\mathbb{P}}$ having values in $\mathbb{R}^d$ and $\mathbb{S}^d_+$, respectively (i.e., the first and second characteristics of $X$) and are absolutely continuous with respect to $ds$ on~$[0,T]$.

Furthermore, we fix a mapping $\mathbb{S}^d_+\ni A\rightarrow A^{\frac{1}{2}}\in \mathbb{R}^{d\times d}$ so that it is Borel measurable and satisfies $A^{\frac{1}{2}}(A^{\frac{1}{2}})^\top=A$ for all $A\in \mathbb{S}_+^d$, (see, e.g., \cite[Remarks 1.1\;\&\;2.1]{possamai2018stochastic}). 

\subsection{Weak formulation and dynamic programming principle}  
For any $\varepsilon\geq 0$ and $(t,x)\in [0,T)\times \mathbb{R}^d$, define by
\begin{align}\label{eq:uncertain.weak}
{\cal P}^\varepsilon ({t,x}):=\left\{ {\mathbb{P}}\in {\cal P}_{\operatorname{sem}}^{\operatorname{ac}} \, \Big|\,
\begin{array}{l}
\mathbb{P}(X_{t\wedge \cdot}=x)=1;\;\;(b_s^{{\mathbb{P}}},(c_s^{\mathbb{P}})^{\frac{1}{2}})\in \mathcal{B}^\varepsilon\\
\mbox{for ${\mathbb{P}}\otimes ds$-almost every $(\omega,s)\in\Omega^{0,x}\times [t,T]$}
\end{array}
\right\}
\end{align}
where we recall that ${\cal B}^\varepsilon$ is given in \eqref{eq:uncertain.char}.

In particular, under any $\mathbb{P}\in{\cal P}^\varepsilon ({t,x})$, the semimartingale $X$ is constant (taking the value $x$) up to time $t$ and after that time its differential characterstics $b^{{\mathbb{P}}}:=\frac{dB^{{\mathbb{P}}}}{ds}$, $c^{{\mathbb{P}}}:=\frac{dC^{{\mathbb{P}}}}{ds}$ satisfy the value constraint as the set $\mathcal{B}^\varepsilon$.

Moreover, recall the function $f$ given in \eqref{eq:nonlinear.pde}. For any $\varepsilon\geq 0$, we define the value function $v^\varepsilon_{\operatorname{weak}}: [0,T]\times \mathbb{R}^d\ni (t,x)\rightarrow v^\varepsilon_{\operatorname{weak}}(t ,x )\in \mathbb{R}$ by setting {for} every $(t,x)\in[0,T)\times \mathbb{R}^d$,
\begin{align}\label{dfn:weak}
v^\varepsilon_{\operatorname{weak}}(t ,x):= \sup_{{\mathbb{P}}\in {\cal P}^{\varepsilon} ({t,x})} \mathbb{E}^{{\mathbb{P}}}\left[f\left(X_T\right)\right]
\end{align}
and $v^\varepsilon_{\operatorname{weak}}(T,\cdot):= f(\cdot)$ on $\mathbb{R}^d$. 

\begin{lem}\label{lem:priori_weak}
For every $p\geq 1$ and $\varepsilon\geq 0$, there is a constant ${C}_{p,\varepsilon}>0$ such that for every $(t,x)\in [0,T)\times \mathbb{R}^d$ and $s\in[t,T]$,
\[
\sup_{\mathbb{P}\in {\cal P}^\varepsilon(t,x)}\mathbb{E}^{\mathbb{P}}\left[\sup_{t\leq u \leq s}\left| X_u-x \right|^p \right] \leq {C}_{p,\varepsilon}\Big((s-t)^{p/2} + (s-t)^p \Big).
\]
\end{lem}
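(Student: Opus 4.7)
The plan is to exploit the canonical decomposition of the continuous semimartingale $X$ under any $\mathbb{P}\in\mathcal{P}^\varepsilon(t,x)$ and then bound the drift and martingale parts separately using Jensen's and the Burkholder--Davis--Gundy (BDG) inequality. Since $\Omega^{0,x}$ is a space of continuous paths and $\mathbb{P}\in\mathcal{P}^{\operatorname{ac}}_{\operatorname{sem}}$, for every such $\mathbb{P}$ there exist progressively measurable processes $b^\mathbb{P}$ and $c^\mathbb{P}$ taking values in $\mathbb{R}^d$ and $\mathbb{S}^d_+$, respectively, such that, on $[t,T]$,
\[
X_u - x = \int_t^u b^\mathbb{P}_r\,dr + M^\mathbb{P}_u, \qquad [M^\mathbb{P}]_u = \int_t^u c^\mathbb{P}_r\,dr,
\]
where $M^\mathbb{P}$ is a continuous local martingale under $\mathbb{P}$ (and $X$ is constant equal to $x$ on $[0,t]$ by definition of $\mathcal{P}^\varepsilon(t,x)$). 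The key observation is that, by the constraint $(b^\mathbb{P}_r,(c^\mathbb{P}_r)^{1/2})\in\mathcal{B}^\varepsilon$ for $\mathbb{P}\otimes ds$-a.e.\ $(\omega,r)$, one has the deterministic pointwise bounds $|b^\mathbb{P}_r|\le |b^o|+\gamma\varepsilon$ and $\|(c^\mathbb{P}_r)^{1/2}\|_{\operatorname{F}}\le \|\sigma^o\|_{\operatorname{F}}+\eta\varepsilon$.

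For the drift part, an application of Jensen's inequality yields
\[
\sup_{t\le u\le s}\Big|\int_t^u b^\mathbb{P}_r\,dr\Big|^p \le (s-t)^{p-1}\int_t^s |b^\mathbb{P}_r|^p\,dr \le \bigl(|b^o|+\gamma\varepsilon\bigr)^p (s-t)^p.
\]
For the martingale part, the BDG inequality (applied componentwise, or in the vector form with constant $c_{{\rm BDG},p}$) together with $\operatorname{tr}(c^\mathbb{P}_r)=\|(c^\mathbb{P}_r)^{1/2}\|_{\operatorname{F}}^2$ gives
\[
\mathbb{E}^\mathbb{P}\Bigl[\sup_{t\le u\le s}|M^\mathbb{P}_u|^p\Bigr] \le c_{{\rm BDG},p}\,\mathbb{E}^\mathbb{P}\Bigl[\Bigl(\int_t^s \operatorname{tr}(c^\mathbb{P}_r)\,dr\Bigr)^{p/2}\Bigr] \le c_{{\rm BDG},p}\bigl(\|\sigma^o\|_{\operatorname{F}}+\eta\varepsilon\bigr)^p (s-t)^{p/2}.
\]
Combining the two estimates via $(a+b)^p\le 2^{p-1}(a^p+b^p)$ and taking supremum over $\mathbb{P}\in\mathcal{P}^\varepsilon(t,x)$ yields the claim with the explicit constant
\[
C_{p,\varepsilon} := 2^{p-1}\max\Bigl\{(|b^o|+\gamma\varepsilon)^p,\; c_{{\rm BDG},p}(\|\sigma^o\|_{\operatorname{F}}+\eta\varepsilon)^p\Bigr\}.
\]

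The only subtle point (and the most technical step to verify cleanly) is the legitimacy of the BDG bound in this form: one must check that $M^\mathbb{P}$ is in fact a genuine (square-integrable) martingale, not merely a local martingale. This follows from the uniform bound on $c^\mathbb{P}$, which makes the quadratic variation bounded and hence provides a deterministic Lyapunov-type control that allows a standard localization argument to remove the local qualifier. Once this is in place, the proof is essentially a direct vector-valued version of the classical a priori moment estimates for It\^o semimartingales.
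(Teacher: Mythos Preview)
Your proof is correct and follows essentially the same approach as the paper: split $X-x$ into its drift and local-martingale parts, bound the drift via Jensen and the deterministic constraint $|b^\mathbb{P}_r|\le |b^o|+\gamma\varepsilon$, bound the martingale via BDG and $\|(c^\mathbb{P}_r)^{1/2}\|_{\operatorname{F}}\le \|\sigma^o\|_{\operatorname{F}}+\eta\varepsilon$, then combine with $(a+b)^p\le 2^{p-1}(a^p+b^p)$. The only differences are cosmetic (your constants are slightly tighter, and you spell out the local-to-true martingale passage, which the paper leaves implicit).
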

\begin{proof}
Fix $\varepsilon\geq 0$, $(t,x)\in [0,T)\times \mathbb{R}^d$, and $s\in[t,T]$, and let $\mathbb{P}\in {\cal P}^\varepsilon(t,x)$. Then under $\mathbb{P}$, the process $X$ has the canonical representation
$
X_s=x+\int_t^s b_r^{\mathbb{P}}dr+M^{\mathbb{P},t}_s,
$
where $({M}_s^{\mathbb{P},t})_{s\in [t,T]}$  denotes $(\mathbb{F}^X,\mathbb{P})$-local martingale part of $({X}_s)_{s\in [t,T]}$ satisfying $M_t^{\mathbb{P},t}=0$ with its differential characteristic $c^{\mathbb{P}}$ satisfying the constraint as $\mathcal{B}^\varepsilon$; see~\eqref{eq:uncertain.weak}. 

By Jensen's inequality and the definition of ${\cal P}^\varepsilon(t,x)$,
\[
\mathbb{E}^{\mathbb{P}}\left[\sup_{t\leq u \leq s}\Big|\int_t^u b_r^{\mathbb{P}} dr \Big|^p\right] \leq (s-t)^{p-1} \mathbb{E}^{\mathbb{P}}\left[\int_t^s |b_r^{\mathbb{P}}|^p dr \right]\leq 2^p(\varepsilon^p + |b^o|^p) (s-t)^p,
\]
where we use the elementary inequality $(a+b)^p\leq 2^p (a^p+b^p)$ for all $a,b\geq 0$.

Moreover, by the Burkholder-Davis-Gundy inequality and the elementary inequality $\|AB\|_{\operatorname{F}}\leq \|A\|_{\operatorname{F}}\|B\|_{\operatorname{F}}$ for all $A,B\in \mathbb{R}^d$,
\begin{align}
\begin{aligned}
	\mathbb{E}^{\mathbb{P}}\left[\sup_{t\leq u \leq s}\left| M_u^{\mathbb{P},t} \right|^p\right] &\leq c_{{\rm BDG},p} \mathbb{E}^{\mathbb{P}}\left[ \left(\int_t^s \| (c_r^{\mathbb{P}})^{\frac{1}{2}} \|_{\operatorname{F}}^2 ds\right)^{p/2} \right] \\
	&\leq  c_{{\rm BDG},p} \left( 2^2(\varepsilon^2 +\|\sigma^o\|_{\operatorname{F}}^2 ) \right)^{p/2} (s-t)^{{p}/{2}}.
\end{aligned}
\end{align}
Our claim follows by using again the inequality $(a+b)^p\leq 2^p (a^p+b^p)$ for all $a,b\geq 0$ and setting ${C}_{p,\varepsilon}:=2^p\{2^p(\varepsilon^p + |b^o|^p) + c_{{\rm BDG},p} ( 2^2(\varepsilon^2 +\|\sigma^o\|_{\operatorname{F}}^2 ) )^{p/2} \}$. 
\end{proof}

\begin{rem}\label{rem:inclusion0}
Lemma \ref{lem:priori_weak} implies that ${\cal P}^\varepsilon(t,x)$ is a subset of ${\cal P}^p(\Omega^{0,x})$ for every $\varepsilon\geq 0$ and $p\geq 1$; see \eqref{eq:p.msr.set} and \eqref{eq:uncertain.weak}.
\end{rem}

 Next, we present the dynamic programming principle for $v^\varepsilon_{\operatorname{weak}}$ in \eqref{dfn:weak} based on \cite{karoui2013capacities,nutz2013constructing}, together with its regularity property. 
\begin{lem}\label{lem:dynamic.value} Suppose that Assumption \ref{as:objective} is satisfied, let $\varepsilon\geq 0$, and let $v^\varepsilon_{\operatorname{weak}}$ be defined in~\eqref{dfn:weak}. Moreover, let $(t,x)\in [0,T)\times \mathbb{R}^d$. Then, the following hold: 
\begin{itemize}
\item [(i)] For any $\mathbb{F}^{X}$-stopping time $\tau$ taking values in $[t,T]$ 
\begin{align}\label{eq:dpp}
	v^\varepsilon_{\operatorname{weak}}(t,x)=  \sup_{{\mathbb{P}}\in {\cal P}^{\varepsilon}({t,x})} \mathbb{E}^{\mathbb{P}}\left[v^\varepsilon_{\operatorname{weak}}(\tau,X_{\tau})\right].
\end{align}
\item [(ii)] $v^\varepsilon_{\operatorname{weak}}$ is jointly continuous.
\end{itemize}
\end{lem}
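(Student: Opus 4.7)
The proof of (i) is a dynamic programming principle in the weak formulation, and I would follow the regular-conditional-probability plus measurable-selection strategy from \cite{NeufeldNutz2014} (cited by the authors). For the inequality ``$\leq$'', fix $\mathbb{P}\in\mathcal{P}^\varepsilon(t,x)$ and use the tower property to write $\mathbb{E}^{\mathbb{P}}[f(X_T)]=\mathbb{E}^{\mathbb{P}}[\mathbb{E}^{\mathbb{P}}[f(X_T)\mid\mathcal{F}^X_\tau]]$. Letting $\mathbb{P}^\omega$ denote a regular conditional probability given $\mathcal{F}^X_\tau$ at $\omega$, a translation-and-restart argument shows that, for $\mathbb{P}$-a.e.\ $\omega$, the pushforward of $\mathbb{P}^\omega$ under the appropriate spatial shift belongs to $\mathcal{P}^\varepsilon(\tau(\omega),X_{\tau(\omega)}(\omega))$: the differential characteristics $(b^{\mathbb{P}^\omega}_s,(c^{\mathbb{P}^\omega}_s)^{1/2})$ remain in $\mathcal{B}^\varepsilon$ because that constraint is imposed pointwise in $(s,\omega)$ and is hence preserved under conditioning. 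This yields $\mathbb{E}^{\mathbb{P}^\omega}[f(X_T)]\leq v^\varepsilon_{\operatorname{weak}}(\tau(\omega),X_{\tau(\omega)}(\omega))$ $\mathbb{P}$-a.s., and integrating gives the desired bound.

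For the opposite inequality, I would apply a measurable-selection argument: for each $\delta>0$ and each $(s,y)$ select $\mathbb{P}^{s,y}\in\mathcal{P}^\varepsilon(s,y)$ with $\mathbb{E}^{\mathbb{P}^{s,y}}[f(X_T)]\geq v^\varepsilon_{\operatorname{weak}}(s,y)-\delta$ in a jointly (universally) measurable way (by the Jankov--von Neumann theorem, once one verifies that the relevant graph is analytic, which follows from the structure of $\mathcal{P}^\varepsilon(s,y)$). Given any $\mathbb{P}\in\mathcal{P}^\varepsilon(t,x)$, concatenate by defining $\bar{\mathbb{P}}$ to follow $\mathbb{P}$ up to $\tau$ and the (translated) $\mathbb{P}^{\tau(\omega),X_{\tau(\omega)}(\omega)}$ afterwards. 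The core check is that $\bar{\mathbb{P}}\in\mathcal{P}^\varepsilon(t,x)$: absolute continuity of the first and second characteristics is inherited from both pieces, and the pointwise constraint $(b,c^{1/2})\in\mathcal{B}^\varepsilon$ survives the concatenation since it is imposed $\bar{\mathbb{P}}\otimes ds$-a.e. One then has $\mathbb{E}^{\bar{\mathbb{P}}}[f(X_T)]\geq \mathbb{E}^{\mathbb{P}}[v^\varepsilon_{\operatorname{weak}}(\tau,X_\tau)]-\delta$, and taking the supremum over $\mathbb{P}$ followed by $\delta\downarrow 0$ yields ``$\geq$''.

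For part (ii), joint continuity will be obtained by combining separate continuity in $x$ and $t$ with a uniform modulus. For continuity in $x$, note that the bijection $\mathbb{P}\mapsto \mathbb{P}\circ(X+(y-x))^{-1}$ maps $\mathcal{P}^\varepsilon(t,x)$ onto $\mathcal{P}^\varepsilon(t,y)$ (the constraint $\mathcal{B}^\varepsilon$ depends only on the characteristics, which are invariant under deterministic translation), so
\[
\bigl|v^\varepsilon_{\operatorname{weak}}(t,y)-v^\varepsilon_{\operatorname{weak}}(t,x)\bigr|\leq \sup_{\mathbb{P}\in\mathcal{P}^\varepsilon(t,x)}\mathbb{E}^{\mathbb{P}}\bigl[\bigl|f(X_T+(y-x))-f(X_T)\bigr|\bigr],
\]
which tends to $0$ as $y\to x$ by the polynomial growth of $\nabla_x f$ (Assumption \ref{as:objective} and Remark \ref{rem:f.nablaf.integrable}) together with the uniform moment bound of Lemma \ref{lem:priori_weak}. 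Continuity in $t$ is obtained via the DPP from (i) applied with $\tau\equiv t\vee t_n$: this reduces the difference $|v^\varepsilon_{\operatorname{weak}}(t_n,x)-v^\varepsilon_{\operatorname{weak}}(t,x)|$ to controlling $\sup_\mathbb{P} \mathbb{E}^{\mathbb{P}}[|v^\varepsilon_{\operatorname{weak}}(t,X_{t\vee t_n})-v^\varepsilon_{\operatorname{weak}}(t,x)|]$, which vanishes as $t_n\to t$ by Lemma \ref{lem:priori_weak}, the spatial continuity just established, and an a priori polynomial bound on $v^\varepsilon_{\operatorname{weak}}$ (itself immediate from the polynomial growth of $f$ and Lemma \ref{lem:priori_weak}).

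The hard part will be the measurable-selection and concatenation step in (i): one must track the $\mathbb{F}^X$-predictable differential characteristics carefully across the concatenation time so that the pointwise constraint $(b^{\bar{\mathbb{P}}}_s,(c^{\bar{\mathbb{P}}}_s)^{1/2})\in\mathcal{B}^\varepsilon$ is verified globally on $[t,T]$, and the measurability required to define $(s,y)\mapsto \mathbb{P}^{s,y}$ must be established at the level of the topology $\tau_p$ on $\mathcal{P}^p(\Omega^{0,x})$. This is precisely the setting in which the semimartingale-uncertainty framework adopted from \cite{NeufeldNutz2014,neufeld2017nonlinear} delivers the required stability, and invoking those results should make the argument essentially routine.
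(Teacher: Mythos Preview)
Your proposal is correct and follows essentially the same route as the paper: for (i) you invoke the measurability of the graph of $(t,\omega)\mapsto\mathcal{P}^\varepsilon(t,\omega_t)$ together with the stability of $\mathcal{P}^\varepsilon$ under conditioning and concatenation from \cite{NeufeldNutz2014,neufeld2017nonlinear} to obtain the DPP (the paper then cites \cite[Theorem~2.1]{karoui2013capacities} explicitly for the final step), and for (ii) you use the translation bijection in $x$ combined with the DPP at a deterministic time and the moment bounds of Lemma~\ref{lem:priori_weak}, exactly as the paper does.
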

\begin{proof}
We start by proving the statement (i). We claim that the set 
\begin{align}\label{eq:borel_map}
\left\{(\omega,t,{\mathbb{P}})\in \Omega^{0,x}\times [0,T]\times {\cal P}(\Omega^{0,x})\;|\;{\mathbb{P}}\in {\cal P}^\varepsilon({t,\omega_t})\right\}
\end{align}
is Borel. Indeed, since $\mathcal{B}^\varepsilon$ is Borel (see \eqref{eq:uncertain.char}) and the map $\mathbb{S}^d_+\ni A\rightarrow A^{\frac{1}{2}}\in\mathbb{R}^{d\times d}$ is Borel-measurable (see Section \ref{sec:prelimi_weak}),  the same arguments presented for the proof of \cite[Lemma 3.1]{fadina2019affine} using the existence of a Borel-measurable map from $\Omega^{0,x}\times [0,T]\times {\cal P}(\Omega^{0,x})$ to the differential characteristics of $X$ given in \cite[Theorem 2.6]{NeufeldNutz2014} ensure the claim to hold.

Furthermore, from \cite[Theorem 2.1]{neufeld2017nonlinear}, the following stability properties of ${\cal P}^\varepsilon({t,x})$ also hold: for any ${\mathbb{P}}\in{\cal P}^\varepsilon({t,x})$ and $\mathbb{F}^{X}$-stopping time $\tau$ having values in $[t,T]$, \vspace{-0.1em}
\begin{itemize}[leftmargin=1.9em]
\item [(a)] There is a set of conditional probability measures $({\mathbb{P}}_\omega)_{\omega \in \Omega^{0,x}}$ of ${\mathbb{P}}$ with respect to ${\cal F}_\tau^{X}$ such that ${\mathbb{P}}_\omega \in {\cal P}^\varepsilon({\tau(\omega),\omega_{\tau(\omega)}})$ for $\mathbb{P}$-almost all $\omega\in \Omega^{0,x}$;
\item [(b)] If there is a set of probability measures $({\mathbb{Q}}_\omega)_{\omega \in \Omega^{0,x}}$ such that ${\mathbb{Q}}_\omega \in {\cal P}^\varepsilon({\tau(\omega),\omega_{\tau(\omega)}})$ for ${\mathbb{P}}$-almost all $\omega\in\Omega^{0,x}$, and the map $\omega \rightarrow {\mathbb{Q}}_\omega$ is ${\cal F}_\tau^{X}$-measurable, then the probability~measure 
$
{\mathbb{P}}\otimes {\mathbb{Q}}(\cdot):=\int_{\Omega^{0,x}} {\mathbb{Q}}_\omega(\cdot){\mathbb{P}}(d\omega)
$
is an element of ${\cal P}^\varepsilon(t,x)$. 
\end{itemize}
Therefore, 
an application of \cite[Theorem 2.1]{karoui2013capacities} (see also \cite[Theorem~2.3]{nutz2013constructing}) 
ensures \eqref{eq:dpp} to hold.

\vspace{0.5em} Now let us prove (ii). Since $v_{\operatorname{weak}}^\varepsilon(T,\cdot)=f(\cdot)$ is continuous (by Assumption \ref{as:objective}), we can and do consider arbitrary $(t,x)\in[0,T)\times \mathbb{R}^d$. The continuity of $v^\varepsilon_{\operatorname{weak}}(t,\cdot)$ follows from the definition of $v^\varepsilon_{\operatorname{weak}}$ given in \eqref{dfn:weak}. 
Indeed, 
since for every $x,y\in \mathbb{R}^d$
\[
v^\varepsilon_{\operatorname{weak}}(t ,y)= \sup_{{\mathbb{P}}\in {\cal P}^{\varepsilon} ({t,y})} \mathbb{E}^{{\mathbb{P}}}\Big[f\left(X_T\right)\Big] = \sup_{{\mathbb{P}}\in {\cal P}^{\varepsilon} ({t,x})} \mathbb{E}^{{\mathbb{P}}}\Big[f\left(X_T+y-x\right)\Big],
\]
by Remark \ref{rem:f.nablaf.integrable} (with the constants $p\geq 1$ and  $c_1>0$) and the elementary property $(a+b)^p\leq 2^p(a^p+b^p)$ for all $a,b\geq0$, we have that 
\begin{align*}
\begin{aligned}
	&|v^\varepsilon_{\operatorname{weak}}(t ,y)-v^\varepsilon_{\operatorname{weak}}(t ,x)|\\
	&\leq \sup_{{\mathbb{P}}\in {\cal P}^{\varepsilon} ({t,x})} \mathbb{E}^{{\mathbb{P}}}\Big[\big|f\left(X_T+y-x\right) -f(X_T) \big|\Big]\\
	& \leq \sup_{{\mathbb{P}}\in {\cal P}^{\varepsilon} ({t,x})} \left\{ \mathbb{E}^{{\mathbb{P}}}\Big[\big| \nabla_x^\top f(X_T) \big| \Big] \cdot |y-x| + c_1  2^p \Big(1+|y-x|^p+ \mathbb{E}^{{\mathbb{P}}}\Big[|X_T|^p\Big]\Big)\cdot |y-x|^2\right\}.
\end{aligned}
\end{align*}
From Lemma \ref{lem:priori_weak} together with the polynomial growth property of $\nabla_x f$, we hence have that there is a constant $c_{3}>0$ (that depends on $p$, $\varepsilon$, $x$, but not on $t$) 
such that
\begin{align}\label{eq:conti_x_weak}
|v^\varepsilon_{\operatorname{weak}}(t ,y)-v^\varepsilon_{\operatorname{weak}}(t ,x)| \leq c_{3} \left(|y-x|+ |y-x|^{p+2}\right),
\end{align}
where we further emphasize that the above estimate holds for every $t\in[0,T)$ and $x,y\in\mathbb{R}^d$.

Now we claim that $v^\varepsilon_{\operatorname{weak}}(\cdot,x)$ is continuous. To that end, fix any $0\leq u \leq T-t$. By the dynamic programming principle of $v^\varepsilon_{\operatorname{weak}}$ (see Lemma \ref{lem:dynamic.value}\;(i)), the following~holds 
\[
v^\varepsilon_{\operatorname{weak}}(t,x)=  \sup_{{\mathbb{P}}\in {\cal P}^{\varepsilon}({t,x})} \mathbb{E}^{\mathbb{P}}\Big[v^\varepsilon_{\operatorname{weak}}(t+u,X_{t+u})\Big].
\]
Hence, we use again Lemma \ref{lem:priori_weak} together with the estimates given in \eqref{eq:conti_x_weak} to have\;that
\begin{align*}
\begin{aligned}
	|v^\varepsilon_{\operatorname{weak}}(t,x)-v^\varepsilon_{\operatorname{weak}}(t+u,x)| & \leq   \sup_{{\mathbb{P}}\in {\cal P}^{\varepsilon}({t,x})} \mathbb{E}^{\mathbb{P}}\left[\Big| v^\varepsilon_{\operatorname{weak}}(t+u,X_{t+u})- v^\varepsilon_{\operatorname{weak}}(t+u,x) \Big|\right] \\
	&\leq c_3 \sup_{{\mathbb{P}}\in {\cal P}^{\varepsilon}({t,x})} \left( \mathbb{E}^{\mathbb{P}}\Big[ |X_{t+u}-x|\Big]+ \mathbb{E}^{\mathbb{P}}\Big[ |X_{t+u}-x|^{p+2} \Big] \right)\\
	& \leq  c_3 \cdot \left({C}_{1,\varepsilon} \big(u^{1/2} + u \big)+ {C}_{p,\varepsilon} \big(u^{\frac{p+2}{2}} + u^{p+2} \big)\right),
\end{aligned}
\end{align*}
where $C_{1,\varepsilon},{C}_{p,\varepsilon}$ are the constant (with exponents $1,p$) appearing in Lemma \ref{lem:priori_weak} (and in particular do not depend on $x$). Combined with \eqref{eq:conti_x_weak}, this ensures that $v^\varepsilon_{\operatorname{weak}}$ is jointly continuous.
\end{proof}

\subsection{Proof of Proposition \ref{pro:main}}\label{sec:proof:pro:main}

Let us introduce the notion of viscosity / classical solution of \eqref{eq:nonlinear.pde} and \eqref{eq:linear.pde.1stdev}. To that end, we introduce the following function spaces: for any $t\in[0,T)$
\begin{itemize}[leftmargin=2em]
\item [$\cdot$] $C^{1,2}([t,T)\times \mathbb{R}^d;\mathbb{R})$ is the set of all real-valued functions on $[t,T)\times\mathbb{R}^d$ which are continuously differentiable on $[t,T)$ and twice continuously differentiable on $\mathbb{R}^d$;
\item [$\cdot$] $C_b^{2,3}([t,T)\times \mathbb{R}^d;\mathbb{R})$ is the set of all real-valued functions on $[t,T)\times\mathbb{R}^d$ which have bounded continuous derivatives up to the second and third order on $[t,T)$ and $\mathbb{R}^d$, respectively.
\end{itemize}

\begin{dfn}[Viscosity solution\;(see~\cite{crandall1992user,fleming2006controlled})]\label{dfn:viscosity_pde}
Fix any $\varepsilon\geq 0$. We call an upper semicontinuous function $v^\varepsilon:[0,T]\times \mathbb{R}^d\rightarrow \mathbb{R}$ a viscosity subsolution of \eqref{eq:nonlinear.pde} if $v^\varepsilon(T,\cdot)\leq f(\cdot)$ on $\mathbb{R}^d$ and 
\[
-\partial_t \varphi(t,x)- \sup_{(b,\sigma)\in \mathcal{B}^\varepsilon}\left\{\frac{1}{2} \operatorname{tr} \big(\sigma \sigma^\top D^2_{xx}\varphi(t,x) \big)+\langle b, \nabla_x \varphi (t,x) \rangle  \right\} \leq 0
\]
whenever\footnote{It is well known that restricting test functions to either $C^{2,3}_b([0,T)\times \mathbb{R}^d; \mathbb{R})$ or $C^{1,2}([0,T)\times \mathbb{R}^d; \mathbb{R})$ does not affect the definition of a viscosity solution. In this work, we adopt $C^{2,3}_b([0,T)\times \mathbb{R}^d; \mathbb{R})$ to align with results in the literature stated in this form.} $\varphi \in C_b^{2,3}([0,T)\times\mathbb{R}^d;\mathbb{R})$ is such that $\varphi\geq v^\varepsilon$ on $[0,T)\times \mathbb{R}^d$ and $\varphi(t,x)=v^\varepsilon(t,x)$. In a similar manner, the notion of a viscosity supersolution can be defined by reversing the inequalities and replacing upper semicontinuity with lower semicontinuity. Finally, we call a continuous function $v^\varepsilon$ from $[0,T]\times \mathbb{R}^d$ to $\mathbb{R}$ a viscosity solution if it is both sub- and supersolution of \eqref{eq:nonlinear.pde}.	
\end{dfn}

\begin{dfn}[Classical solution]\label{dfn:strong_pde}
Fix $t\in[0,T)$ and $i=1,\dots,d$. We call a continuous function $w^i:[t,T]\times \mathbb{R}^d\rightarrow\mathbb{R}$ a classical solution of \eqref{eq:linear.pde.1stdev} if it is in $C^{1,2}([t,T)\times \mathbb{R}^d;\mathbb{R})$ and satisfies \eqref{eq:linear.pde.1stdev}. 
\end{dfn}

\begin{lem}\label{lem:weak_viscosity}
Suppose that Assumptions \ref{as:objective} and \ref{as:comparison} are satisfied and let $\varepsilon\geq 0$. Then $v^\varepsilon_{\operatorname{weak}}:[0,T]\times \mathbb{R}^d\rightarrow \mathbb{R}$ defined in \eqref{dfn:weak} is a unique viscosity solution of \eqref{eq:nonlinear.pde} satisfying \eqref{eq:growth_as}.
\end{lem}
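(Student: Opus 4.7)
The plan is to verify four items in turn: (a) the growth estimate \eqref{eq:growth_as}; (b) the terminal condition; (c) the viscosity subsolution and supersolution properties; and (d) uniqueness.

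For (a), by Assumption \ref{as:objective} the boundary function $f$ has polynomial growth of order $\alpha+2$ (see Remark \ref{rem:f.nablaf.integrable}), and Lemma \ref{lem:priori_weak} yields
\[
\sup_{\mathbb{P}\in\mathcal{P}^\varepsilon(t,x)} \mathbb{E}^{\mathbb{P}}[|X_T|^{\alpha+2}]\leq c(1+|x|^{\alpha+2})
\]
for some constant $c$ independent of $(t,x)$. Hence $|v^\varepsilon_{\operatorname{weak}}(t,x)|$ grows polynomially in $|x|$, which is much stronger than the bound imposed in \eqref{eq:growth_as}. For (b), the terminal value $v^\varepsilon_{\operatorname{weak}}(T,\cdot)=f(\cdot)$ is built into the definition \eqref{dfn:weak}, and the joint continuity of $v^\varepsilon_{\operatorname{weak}}$ has already been established in Lemma \ref{lem:dynamic.value}(ii).

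For (c), I would combine the dynamic programming principle given in Lemma \ref{lem:dynamic.value}(i) with It\^o's formula in the standard way. Fix a test function $\varphi\in C_b^{2,3}([0,T)\times\mathbb{R}^d;\mathbb{R})$ and a contact point $(t_0,x_0)$.

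For the \emph{supersolution} direction (the easier one), take any $(b_0,\sigma_0)\in\mathcal{B}^\varepsilon$ and let $\mathbb{P}_{b_0,\sigma_0}\in \mathcal{P}^\varepsilon(t_0,x_0)$ be the law of the constant-coefficient semimartingale $X_s=x_0+b_0(s-t_0)+\sigma_0(W_s-W_{t_0})$ on $[t_0,T]$. The DPP combined with $\varphi\le v^\varepsilon_{\operatorname{weak}}$ and $\varphi(t_0,x_0)=v^\varepsilon_{\operatorname{weak}}(t_0,x_0)$ yields, for every small $h>0$,
\[
0\ge \mathbb{E}^{\mathbb{P}_{b_0,\sigma_0}}\!\left[\int_{t_0}^{t_0+h}\!\Big(\partial_t\varphi + \langle b_0,\nabla_x\varphi\rangle+\tfrac{1}{2}\operatorname{tr}(\sigma_0\sigma_0^\top D_x^2\varphi)\Big)(s,X_s)\,ds\right]\!.
\]
Dividing by $h$, letting $h\downarrow 0$, and using continuity of $\varphi$ and of its derivatives together with the moment bounds of Lemma \ref{lem:priori_weak} gives the pointwise inequality at $(t_0,x_0)$, and taking the supremum over $(b_0,\sigma_0)\in\mathcal{B}^\varepsilon$ yields the required supersolution inequality.

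For the \emph{subsolution} direction, use that the DPP provides, for each $h>0$, an $h^2$-optimal $\mathbb{P}_h\in\mathcal{P}^\varepsilon(t_0,x_0)$ so that $v^\varepsilon_{\operatorname{weak}}(t_0,x_0)\le h^2+\mathbb{E}^{\mathbb{P}_h}[v^\varepsilon_{\operatorname{weak}}(t_0+h,X_{t_0+h})]$. Replacing $v^\varepsilon_{\operatorname{weak}}$ by the dominating $\varphi$ on the right and applying It\^o's formula under $\mathbb{P}_h$ gives
\[
-h^2\le \mathbb{E}^{\mathbb{P}_h}\!\left[\int_{t_0}^{t_0+h}\Big(\partial_t\varphi + \langle b_s^{\mathbb{P}_h},\nabla_x\varphi\rangle+\tfrac{1}{2}\operatorname{tr}(c_s^{\mathbb{P}_h} D_x^2\varphi)\Big)(s,X_s)\,ds\right]\!.
\]
Since $(b_s^{\mathbb{P}_h},(c_s^{\mathbb{P}_h})^{1/2})\in\mathcal{B}^\varepsilon$ $\mathbb{P}_h\otimes ds$-a.e., the bracketed integrand is pointwise dominated by $\partial_t\varphi(s,X_s)+\sup_{(b,\sigma)\in\mathcal{B}^\varepsilon}\{\langle b,\nabla_x\varphi\rangle+\tfrac12\operatorname{tr}(\sigma\sigma^\top D_x^2\varphi)\}(s,X_s)$, and after dividing by $h$ and sending $h\downarrow 0$ one obtains the subsolution inequality at $(t_0,x_0)$.

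Finally, (d) uniqueness in the class satisfying \eqref{eq:growth_as} is precisely Assumption \ref{as:comparison}.

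The main obstacle I expect is the subsolution direction. Because the measures $\mathbb{P}_h$ obtained from the DPP may carry non-constant differential characteristics, passing to the limit $h\downarrow 0$ in $\frac{1}{h}\mathbb{E}^{\mathbb{P}_h}[\int_{t_0}^{t_0+h}\cdots ds]$ requires controlling the variation of $\nabla_x\varphi(s,X_s)$ and $D^2_x\varphi(s,X_s)$ over $[t_0,t_0+h]$ uniformly in the choice of $\mathbb{P}_h\in\mathcal{P}^\varepsilon(t_0,x_0)$; this is carried out using the boundedness of third derivatives of $\varphi$ (hence the use of $C_b^{2,3}$ test functions in Definition \ref{dfn:viscosity_pde}) together with the moment bound in Lemma \ref{lem:priori_weak}.
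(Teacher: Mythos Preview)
Your proof is correct and follows essentially the same route as the paper. The paper invokes Lemma~\ref{lem:dynamic.value} for the DPP and joint continuity, then defers the viscosity sub/supersolution argument to \cite[Proposition~5.4]{neufeld2017nonlinear} rather than writing it out, and finishes with the polynomial growth estimate via Lemma~\ref{lem:priori_weak} plus Assumption~\ref{as:comparison} for uniqueness --- exactly the ingredients you assemble, with your step~(c) simply spelling out the standard DPP/It\^o argument that the cited reference contains.
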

\begin{proof}
By Lemma \ref{lem:dynamic.value}, $v^\varepsilon_{\operatorname{weak}}$ satisfies the dynamic programming principle and is jointly continuous. Hence, the same arguments as presented for the proof of \cite[Proposition 5.4]{neufeld2017nonlinear} ensure that $v^\varepsilon_{\operatorname{weak}}$ is a unique viscosity solution of \eqref{eq:nonlinear.pde}. Furthermore, as the function $f$ has at most polynomial growth (see Remark \ref{rem:f.nablaf.integrable}), Lemma~\ref{lem:priori_weak} ensures that $v^\varepsilon_{\operatorname{weak}}$ has polynomial growth with respect to $x\in \mathbb{R}^d$ for all $t\in[0,T]$. This implies that $v^\varepsilon_{\operatorname{weak}}$ satisfies \eqref{eq:growth_as} with some $C>0$. Hence by Assumption \ref{as:comparison}, $v^\varepsilon_{\operatorname{weak}}$ is the unique viscosity solution satisfying \eqref{eq:growth_as}. 
\end{proof}

\begin{proof}[Proof of Proposition \ref{pro:main}] 
The statement (i) follows directly from Lemma \ref{lem:weak_viscosity}. Now let us prove\;(ii). Note that $\nabla_x f$ has at most polynomial growth (see Remark\;\ref{rem:f.nablaf.integrable})  and $(b^o,\sigma^o)$ are constant. Furthermore, $\sigma^o$ is non-degenerate (see Assumption\;\ref{as:sigma.inverse}). Hence, an application of \cite[Theorem~5.7.6 \& Remark~5.7.8]{KS1991} (see also \cite[Theorem 4.32]{Krylov1999}) 
ensures the existence of a classical solution of \eqref{eq:linear.pde.1stdev}. The uniqueness of the solution with polynomial growth is guaranteed by \cite[Corollary~6.4.4]{friedman1975stochastic}.
\end{proof}

\subsection{Strong formulation and its equivalence}\label{sec:StrongWeak}
In this section, we construct a set of probability measures corresponding to a strong formulation of the nonlinear Kolmogorov PDE given in \eqref{eq:nonlinear.pde}. 

Recall the process $X^{t,x;b,\sigma}$ defined on $[t,T]$ (given in \eqref{eq:ito.semi.time}) and denote by $(x\oplus_t X^{t,x;b,\sigma})$ the constant concatenation of $X^{t,x;b,\sigma}$ defined on $[0,T]$, i.e. 
\begin{align}\label{eq:ito.semi.time.concat}
(x\oplus_t X^{t,x;b,\sigma})_s:=x {\bf 1}_{\{s \in [0,t)\}}+ X^{t,x;b,\sigma}_s {\bf 1}_{\{s\in [t,T]\}}.
\end{align}

Then using the set ${\cal C}^\varepsilon(t)$ given in \eqref{eq:char.strong}, we define a set of (push-forward) probability measures as follow: for any $\varepsilon\geq 0$ and $(t,x)\in[0,T)\times \mathbb{R}^d$
\begin{align}\label{eq:uncertain.strong1}
{\cal Q}^\varepsilon(t,x):={\cal Q} \left({t,x};{\cal C}^{\varepsilon} \right)=\left\{\mathbb{P}_0^t\circ \left(x\oplus_t {X}^{t,x;b,\sigma}  \right)^{-1}\;\Big|
\;\mbox{$(b,\sigma)\in {\cal C}^{\varepsilon}(t)$}\right\}\subseteq {\cal P}(\Omega^{0,x}).
\end{align}

\begin{rem}\label{rem:inclusion}
By the definition of $(x\oplus_t X^{t,x;b,\sigma})$ given in \eqref{eq:ito.semi.time.concat}, ${\cal Q}^\varepsilon(t,x)$ is a subset of ${\cal P}^{\varepsilon}(t,x)$ for every $\varepsilon\geq 0$; see \eqref{eq:uncertain.weak} for the definition.
\end{rem}

\begin{pro}\label{pro:weak.dense} 
Suppose that Assumption \ref{as:sigma.inverse} is satisfied. Let $\varepsilon < \lambda_{\min}(\sigma^o)$ (see Remark~\ref{rem:sigma.inverse}) and $(t,x)\in[0,T)\times \mathbb{R}^d$. Moreover, let ${\cal P}^\varepsilon ({t,x})$ and ${\cal Q}^\varepsilon \left({t,x}\right)$ be defined in \eqref{eq:uncertain.weak} and \eqref{eq:uncertain.strong1}, respectively. Then, there exists ${\cal Q}^{\varepsilon}_{\operatorname{sub}}(t,x)\subseteq {\cal Q}^\varepsilon \left({t,x}\right)$ such that its convex hull is a dense subset of ${\cal P}^\varepsilon ({t,x})$ with respect to the $\tau_p$-topology for all $p\geq1$. 
\end{pro}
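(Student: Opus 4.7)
The plan is to construct ${\cal Q}^\varepsilon_{\mathrm{sub}}(t,x)$ as the subset of ${\cal Q}^\varepsilon(t,x)$ corresponding to controls $(b,\sigma) \in {\cal C}^\varepsilon(t)$ that are piecewise constant in time on a dyadic partition of $[t,T]$ and take values in a fixed countable dense subset $D$ of $\mathcal{B}^\varepsilon$. With this choice, ${\cal Q}^\varepsilon_{\mathrm{sub}}(t,x)$ is itself countable, and the density claim rests on two complementary steps: a time-discretization which reduces general ${\mathbb{F}}^{W^t}$-predictable controls to piecewise constant ones, and a weak-to-strong bridge which expresses any $\mathbb{P} \in {\cal P}^\varepsilon(t,x)$ as a $\tau_p$-limit of convex mixtures of such strong controls.

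The starting point is the non-degeneracy provided by $\varepsilon < \lambda_{\min}(\sigma^o)$: by Remark \ref{rem:sigma.inverse}, every $\sigma$ appearing in some $(b,\sigma) \in \mathcal{B}^\varepsilon$ is invertible and $\|\sigma^{-1}\|$ is uniformly bounded. Given $\mathbb{P} \in {\cal P}^\varepsilon(t,x)$, I would enlarge $(\Omega^{0,x}, {\cal F}^{X}, \mathbb{P})$ by an independent Wiener space and use martingale representation to obtain a $d$-dimensional Brownian motion $W^\mathbb{P}$ on the enlarged space for which
\[
X_s - x = \int_t^s b^\mathbb{P}_u \, du + \int_t^s (c^\mathbb{P}_u)^{1/2} \, dW^\mathbb{P}_u, \qquad s \in [t,T],
\]
with $(b^\mathbb{P}, (c^\mathbb{P})^{1/2})$ predictable with respect to the enlarged filtration and valued in $\mathcal{B}^\varepsilon$. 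This is the standard representation of a non-degenerate semimartingale and follows the construction carried out in the setting of \cite[Section 5]{neufeld2017nonlinear}.

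Next, I would approximate $(b^\mathbb{P}, (c^\mathbb{P})^{1/2})$ by simple predictable processes $(b^{(n)}, \sigma^{(n)})$ that are constant on the intervals of a dyadic partition of $[t,T]$ and take values in the countable dense set $D$. Because $\mathcal{B}^\varepsilon$ is bounded, this approximation is standard and yields $L^2$-convergence under $\mathbb{P} \otimes ds$; the It\^o isometry combined with the uniform $p$-moment estimate of Lemma \ref{lem:priori_weak} then gives $L^p(\mathbb{P})$-convergence (for every $p \geq 1$) of the corresponding SDE solutions to $X$, which upgrades to $\tau_p$-convergence of the laws in view of the definition in \eqref{eq:top.tau.wass}.

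The main obstacle is showing that at each discretization level $n$ the approximating law is an element of the closed convex hull of ${\cal Q}^\varepsilon_{\mathrm{sub}}(t,x)$. The subtlety is that on each grid interval $[t_k,t_{k+1}]$ the simple control $(b^{(n)}, \sigma^{(n)})$ may depend on the full history of the enlarged Brownian motion up to $t_k$, which is richer than the canonical Brownian filtration on $\Omega^t$. However, since the control only attains finitely many values from $D$ on each interval, I would condition on the finitely many attainable trajectories of $(b^{(n)}, \sigma^{(n)})$ across the grid: each conditional law is generated by a deterministic (in particular ${\mathbb{F}}^{W^t}$-adapted) piecewise constant $D$-valued control, hence lies in ${\cal Q}^\varepsilon_{\mathrm{sub}}(t,x)$, and the unconditional law is the corresponding finite convex combination. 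Carrying out this randomization-to-mixture identification consistently along the grid, together with a tightness argument based on Lemma \ref{lem:priori_weak} to pass to the limit in $n$, is the technically delicate step; the rest is the standard stability of non-degenerate SDEs under $L^p$-convergence of coefficients.
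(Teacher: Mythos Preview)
Your overall two-step strategy---discretize the controls in time, then express each discretized law as a mixture of simpler laws---is in the same spirit as the paper's approach via Lemmas~\ref{lem:discret} and~\ref{lem:random}. The genuine gap is in your mixture step. You propose to condition on the finitely many attainable trajectories of the piecewise constant control $(b^{(n)},\sigma^{(n)})$ and assert that each conditional law is generated by a deterministic control driven by a Brownian motion, hence lies in ${\cal Q}^\varepsilon_{\mathrm{sub}}(t,x)$. This fails: the value of $(b^{(n)},\sigma^{(n)})$ on $(t_k,t_{k+1}]$ is ${\cal F}_{t_k}$-measurable and typically depends on the driving Brownian path, so conditioning on it is conditioning on an event in ${\cal F}_{t_k}$, which destroys the Brownian property of $W^{\mathbb{P}}$ on $[t,t_k]$. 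Concretely, with two intervals, $\sigma^{(2)}=\sigma^o$ on $[t,t_1]$ and $\sigma^{(2)}=\sigma_1 1_{\{W^{\mathbb{P}}_{t_1}>0\}}+\sigma_2 1_{\{W^{\mathbb{P}}_{t_1}\le 0\}}$ on $(t_1,T]$, the law of the resulting process has $X_{t_1}$ and $X_T-X_{t_1}$ \emph{dependent} (the conditional variance of the second depends on the sign of the first), whereas under any convex combination of the two deterministic-control laws they are independent. So the discretized law is not a finite mixture of those two laws, and the conditional law given $\{\sigma^{(2)}_{t_2}=\sigma_1\}$ is not even in ${\cal Q}^\varepsilon(t,x)$.

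The paper avoids this by conditioning on \emph{external} randomness rather than on the control values. On an auxiliary space it couples a Brownian motion $\widehat W^t$ with i.i.d.\ uniform variables $(U,V)$ independent of $\widehat W^t$ and, via regular conditional distributions, represents each discretized control as a function $\Theta_k(\widehat W^t|_{[t,t_k]},U^1,\dots,U^k)$ so that the joint law of $(\widehat W^t,\text{controls})$ matches that under $\mathbb{P}$. Conditioning on $(U,V)=(u,v)$ leaves $\widehat W^t$ Brownian (by independence) and yields controls that are adapted Borel \emph{functionals of the Brownian path}; this is exactly what defines the paper's ${\cal Q}^\varepsilon_{\mathrm{sub}}(t,x)$, which is not a countable family of deterministic controls but the set of all laws driven by $(\mu(s,W^t),\Sigma(s,W^t))$. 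A Fubini inequality together with Hahn--Banach then places the discretized law in the $\tau_p$-closed convex hull. Your smaller, countable ${\cal Q}^\varepsilon_{\mathrm{sub}}$ built from deterministic $D$-valued controls may ultimately suffice, but establishing it would require this randomization device (or an equivalent one), not direct conditioning on the adapted control trajectory.
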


A variant of Proposition \ref{pro:weak.dense} already appears in \cite{DNS2012}, see Proposition 3.5 therein and its proof.
The difference to our setting is that in  \cite{DNS2012} the authors consider volatility only (i.e.\ corresponding to $b=0$) and  weak convergence instead of the $\tau_p$-topology (i.e.\ consider only bounded test functions instead of polynomially growing functions, see \eqref{eq:top.tau.wass}).
However, the proof presented in \cite{DNS2012} can be copied line by line: adding drift does not require any different arguments at all, while the extension from weak convergence to $\tau_p$-convergence only requires to change the application of the Hahn-Banach theorem in step 3 of the proof of  \cite[Proposition 3.5]{DNS2012} from the space of bounded continuous functions to those with polynomial growth.
For convenience of the reader, we repeat the proof presented in \cite{DNS2012} adjusted to the present setting in Appendix \ref{sec:supple}. 

Recall the function $f:\mathbb{R}^d\rightarrow \mathbb{R}$ given in \eqref{eq:nonlinear.pde} and the canonical process $X=(X_s)_{s\in[0,T]}$ defined on $(\Omega^{0,x},{\cal F}^{0,x},\mathbb{F}^{X})$. For any $\varepsilon\geq 0$, we define the value function $v_{\operatorname{strong}}^\varepsilon:[0,T]\times \mathbb{R}^d\rightarrow \mathbb{R}$ by setting for every $(t,x)\in [0,T)\times \mathbb{R}^d$,
\begin{align}\label{dfn:strong}
v^\varepsilon_{\operatorname{strong}}(t,x):= \sup_{\mathbb{P}\in \mathcal{Q}^{\varepsilon}(t,x)} \mathbb{E}^{{\mathbb{P}}}\left[f\left(X_T\right)\right]=  \sup_{(b,\sigma)\in \mathcal{C}^{\varepsilon}(t)}\mathbb{E}^{\mathbb{P}_0^t}\left[f\left(X^{t,x;b,\sigma}_T\right)\right]
\end{align}
and $v^\varepsilon_{\operatorname{strong}}(T,\cdot):=f(\cdot)$ on $\mathbb{R}^d$. We call this the `strong formulation' of modeling uncertainty of $X$, which will turn out to be equivalent to the weak formulation $v^\varepsilon_{\operatorname{weak}}$ in the next proposition. 

\begin{pro}\label{pro:equiv.weak.strong}
Suppose that Assumptions \ref{as:objective}, \ref{as:sigma.inverse}, and \ref{as:comparison} are satisfied and let $v^\varepsilon_{\operatorname{weak}}$ and $v^\varepsilon_{\operatorname{strong}}$ be defined in \eqref{dfn:weak} and \eqref{dfn:strong}, respectively. Then the following  equality holds: for any $\varepsilon < \lambda_{\min}(\sigma^o)$ and $(t,x)\in [0,T]\times \mathbb{R}^d$,
\[
v^\varepsilon_{\operatorname{strong}}(t,x)=v^\varepsilon_{\operatorname{weak}}(t,x).
\]
\end{pro}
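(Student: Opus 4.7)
The plan is to prove the two inequalities separately, with the nontrivial one relying essentially on the density statement of Proposition~\ref{pro:weak.dense}. First, by Remark~\ref{rem:inclusion}, we have $\mathcal{Q}^\varepsilon(t,x)\subseteq \mathcal{P}^\varepsilon(t,x)$ for every $\varepsilon\geq 0$ and $(t,x)\in[0,T)\times\mathbb{R}^d$. Taking the supremum over the smaller set immediately gives
\[
v^\varepsilon_{\operatorname{strong}}(t,x)
=\sup_{\mathbb{P}\in\mathcal{Q}^\varepsilon(t,x)}\mathbb{E}^{\mathbb{P}}[f(X_T)]
\leq \sup_{\mathbb{P}\in\mathcal{P}^\varepsilon(t,x)}\mathbb{E}^{\mathbb{P}}[f(X_T)]
= v^\varepsilon_{\operatorname{weak}}(t,x).
\]
Note also that at $t=T$ both functions equal $f(x)$ by definition, so it remains to treat $t\in[0,T)$.

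For the reverse inequality, the idea is to approximate every $\mathbb{P}\in \mathcal{P}^\varepsilon(t,x)$ by convex combinations of measures from the subset $\mathcal{Q}^\varepsilon_{\operatorname{sub}}(t,x)\subseteq \mathcal{Q}^\varepsilon(t,x)$ provided by Proposition~\ref{pro:weak.dense}. To apply that proposition, I first need to verify that the payoff functional $\xi\colon\Omega^{0,x}\ni\omega\mapsto f(\omega_T)$ lies in $C_p(\Omega^{0,x};\mathbb{R})$ for a suitable $p\geq 1$. This follows from Assumption~\ref{as:objective} via Remark~\ref{rem:f.nablaf.integrable}: $f$ has polynomial growth of order $\alpha+2$, so $|f(\omega_T)|\leq C(1+|\omega_T|^{\alpha+2})\leq C(1+\|\omega\|_\infty^{\alpha+2})$, whence $\xi\in C_p(\Omega^{0,x};\mathbb{R})$ with $p:=\alpha+2$. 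Since $\xi$ is moreover continuous on $\Omega^{0,x}$ (the evaluation map is continuous for the uniform topology and $f$ is continuous), it is a genuine test function for the $\tau_p$-convergence defined in \eqref{eq:top.tau.wass}.

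Now fix $\mathbb{P}\in\mathcal{P}^\varepsilon(t,x)$. By Proposition~\ref{pro:weak.dense}, there exist a sequence of convex combinations $\mathbb{P}^n=\sum_{i=1}^{k_n}\lambda_i^n\mathbb{Q}_i^n$ with $\mathbb{Q}_i^n\in\mathcal{Q}^\varepsilon_{\operatorname{sub}}(t,x)\subseteq \mathcal{Q}^\varepsilon(t,x)$, $\lambda_i^n\geq 0$, and $\sum_{i=1}^{k_n}\lambda_i^n=1$, such that $\mathbb{P}^n\xrightarrow{\tau_p}\mathbb{P}$. By linearity of expectation and the definition of $v^\varepsilon_{\operatorname{strong}}$,
\[
\mathbb{E}^{\mathbb{P}^n}[\xi]=\sum_{i=1}^{k_n}\lambda_i^n\mathbb{E}^{\mathbb{Q}_i^n}[f(X_T)]
\leq \sup_{i}\mathbb{E}^{\mathbb{Q}_i^n}[f(X_T)]
\leq v^\varepsilon_{\operatorname{strong}}(t,x).
\]
Passing to the limit $n\to\infty$ using $\xi\in C_p(\Omega^{0,x};\mathbb{R})$ yields $\mathbb{E}^{\mathbb{P}}[f(X_T)]\leq v^\varepsilon_{\operatorname{strong}}(t,x)$, and taking the supremum over $\mathbb{P}\in\mathcal{P}^\varepsilon(t,x)$ gives $v^\varepsilon_{\operatorname{weak}}(t,x)\leq v^\varepsilon_{\operatorname{strong}}(t,x)$, which together with the opposite inequality already established completes the proof.

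The key obstacle for this argument is entirely offloaded to Proposition~\ref{pro:weak.dense}: one needs that weak semimartingale measures with characteristics valued in $\mathcal{B}^\varepsilon$ can be approximated (through convex combinations in the $\tau_p$-topology, not merely in the usual weak topology) by measures induced by It\^o semimartingales driven by the canonical Brownian motion. The restriction $\varepsilon<\lambda_{\min}(\sigma^o)$ enters precisely there, ensuring that every admissible $\sigma$ is invertible (Remark~\ref{rem:sigma.inverse}) so that a Brownian motion can be recovered from the semimartingale via the inverse of $\sigma$, enabling the strong-formulation representation. Once that density result is in hand, the remainder of the proof is the short linearity-and-limit argument above.
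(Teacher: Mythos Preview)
Your proof is correct and follows essentially the same approach as the paper: both use the inclusion $\mathcal{Q}^\varepsilon(t,x)\subseteq\mathcal{P}^\varepsilon(t,x)$ for one inequality and invoke Proposition~\ref{pro:weak.dense} together with the polynomial growth of $f$ (placing $\omega\mapsto f(\omega_T)$ in $C_p(\Omega^{0,x};\mathbb{R})$) for the other. The paper's version is slightly more compressed, phrasing the second step as ``the map $\mathbb{P}\mapsto\mathbb{E}^{\mathbb{P}}[\xi]$ is continuous and linear, hence its supremum over $\mathcal{P}^\varepsilon(t,x)$ agrees with that over $\mathcal{Q}^\varepsilon(t,x)$,'' while you spell out the convex-combination-and-limit argument explicitly.
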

\begin{proof}
Recall the sets ${\cal P}^{\varepsilon}({t,x})$ and ${\cal Q}^{\varepsilon}({t,x})$ defined in \eqref{eq:uncertain.weak} and \eqref{eq:uncertain.strong1}. Denote by ${\cal Q}^\varepsilon_{\operatorname{sub}}(t,x)$ the subset of ${\cal Q}^\varepsilon(t,x)$ such that its' convex hull is a dense subset of ${\cal P}^\varepsilon(t,x)$ with respect to the $\tau_p$-topology for all $p\geq 1$ (see Proposition \ref{pro:weak.dense}). 
Furthermore, ${\cal P}^\varepsilon(t,x)$ is a subset of  ${\cal P}^p(\Omega^{0,x})$ for every $p\geq 1$ (see Remark \ref{rem:inclusion0}).
Since for every $\xi \in C_p(\Omega^{0,x};\mathbb{R})$ the map ${\cal P}^p(\Omega^{0,x})\ni \mathbb{P}\rightarrow \mathbb{E}^{{\mathbb{P}}}\left[\xi\right]$ is continuous and linear, it follows that
$
\sup_{{\mathbb{P}}\in {\cal P}^{\varepsilon}({t,x})} \mathbb{E}^{{\mathbb{P}}}\left[\xi\right]  = \sup_{{\mathbb{P}}\in {\cal Q}^{\varepsilon}({t,x})} \mathbb{E}^{{\mathbb{P}}}\left[\xi\right].
$
Therefore, as the function $f$ has at most polynomial growth (see Remark \ref{rem:f.nablaf.integrable}), 
\begin{align*}
v^\varepsilon_{\operatorname{weak}}(t,x)=\sup_{{\mathbb{P}}\in {\cal P}^{\varepsilon} ({t,x})} \mathbb{E}^{{\mathbb{P}}}\left[f(X_T)\right]
= \sup_{{\mathbb{P}}\in {\cal Q}^{\varepsilon}({t,x})} \mathbb{E}^{{\mathbb{P}}}\left[f(X_T)\right]  =v^\varepsilon_{\operatorname{strong}}(t,x).
\end{align*}
This completes the proof.
\end{proof}

\section{Proof of Theorem \ref{thm:MC}} For notational simplicity, throughout this section, we set $\mathbb{E}_{X_t^o=0}[\cdot]:=\mathbb{E}[\cdot|X_t^o=0]$ for every $t\in[0,T]$.
We adopt the same notation for the conditional variance and the conditional expectation/variance given $\widetilde{X}^o$.
\begin{rem}
\label{rem:f.hessian.integrable}	
If  $f$ satisfies the assumptions imposed in Theorem \ref{thm:MC} (in particular, its third derivatives having at most polynomial growth of order $\widetilde{\alpha}$ for some $\widetilde{\alpha}\geq 1$), $D_x^2f$, $\nabla_x f$, and $f$ are continuous and have at most polynomial growth of order $\widetilde \alpha+1$, $\widetilde \alpha+2$, and $\widetilde \alpha+3$, respectively (see Remark\;\ref{rem:f.nablaf.integrable}). Furthermore, there is $\widetilde c_f$ such that $|\nabla_x f(y)-\nabla_xf(\tilde y)|\leq \widetilde c_f(1+|y|^{\widetilde \alpha+1}+|\tilde y|^{\widetilde \alpha+1})|y-\tilde y|$ 
for every $y,\tilde y \in \mathbb{R}^d$. 
\end{rem}


\begin{lem}\label{lem:sup_est} 
Let $g:[0,T]\times\mathbb{R}^d\mapsto \mathbb{R}$  be Borel s.t.\ $|g(t,x)|\leq c_g(1+|x|^{\alpha})$  for every $(t,x)\in[0,T]\times \mathbb{R}^d$ for some $c_g, \alpha \geq 1$. 
Let $X_t$ and $\widetilde X_t^o$ be the processes appearing in \eqref{eq:Feyn_1}. 
Then there is a constant $C$ depending only on $\alpha,T,d,c_g,b^0,\sigma^0$ such that 
\begin{itemize}
	\item [(i)] $
	\mathbb{E}_{X_t^o=0}[\sup_{u\in [s_1,s_2]}~|X_u^o-X_{s_1}^o|^2]^{\frac{1}{2}}\leq  C (s_2-s_1)^{{1}/{2}}$,
	\item[(ii)] $
	\mathbb{E}_{X_t^o=0}[\sup_{s\in [t,T]}~|g(s,x+X_s^o)|^2]^\frac{1}{2}\leq C(1+|x|^{ \alpha})$,
	\item[(iii)] $
	\mathbb{E}_{X_t^o=0}[\sup_{s\in [t,T]}~|\mathbb{E}_{\widetilde{X}_s^o=0}[g(s,x+X_s^o+\widetilde{X}_T^o)]|^2]^\frac{1}{2}\leq C\big(1+|x|^{ \alpha}\big)$.
\end{itemize}
\end{lem}
%
\begin{proof} 
Since $X_s^o=b^o(s-t)+\sigma^o (W_s-W_t)$ for $s\in[t,T]$ under $X_t^o=0$, the BDG inequality shows that for every $p\geq 1$, 
\begin{align*}
		\mathbb{E}_{X_t^o=0}\bigg[\sup_{u\in [s_1,s_2]}~|X_u^o-X_{s_1}^o|^p\bigg]^{\frac{1}{p}}\hspace{-0.3em}
		&\leq |b^o|(s_{2}-s_1)+\mathbb{E}_{X_t^o=0}\bigg[\sup_{u\in[s_1,s_2]}\big|\sigma^o(W_u-W_{s_1})\big|^{p}\bigg]^\frac{1}{p} \\
		&\leq C_p (s_2-s_1)^{1/2}, 
\end{align*}
where $C_p$ depends only on $T, b^0,\sigma^0$ and the BDG-$p$-constant.
In particular, (ii) follows by choosing $p=2$.
Moreover, choosing $s_1=t$, we have that $\mathbb{E}_{X_t^o=0}[|X_s|^p]^{\frac{1}{p}} \leq C_p$.
Hence the statements (ii) and (iii) follow from the growth assumptions made on $g$.
%
\end{proof}

\begin{proof}[Proof of Theorem \ref{thm:MC}\;(i)]
Fix $(t,x)\in[0,T)\times \mathbb{R}^d$.
The constants $C_0,C_1,\dots$ in this proof depend on $t,T,x,d,\tilde{\alpha},b^0,\sigma^0$, but not on $M_0,N,M_1,M_2$.

\vspace{0.5em}
\emph{Step 1.} We first claim that $\|v^0(t,x)  -\widehat{v}^{0,(t,x)}_{M_0} \|_{L^2}\leq C_{0}/\sqrt{M_0}$.

Indeed, observe that  $v^0(t,x) = \mathbb{E}_{X_t^o=0}[f(x+X_T^o)]$ (see \eqref{eq:Feyn_1}).
Thus, since  $f(x+\xi_{T,t}^l),\dots ,f(x+\xi_{T,t}^{M_0})$ are i.i.d.\;random samples of $f(x+X_T^o)$ under $X_t^o=0$,
\begin{align*}
	\Big\|v^0(t,x)  -\widehat{v}^{0,(t,x)}_{M_0} \Big\|^2_{L^2}
	&= \mathbb{E}\bigg[\bigg| \mathbb{E}_{X_t^o=0}\Big[f(x+X_T^o)\Big]- \frac{1}{M_0} \sum_{l=1}^{M_0} f(x+\xi_{T,t}^{l}) \bigg|^2\bigg] \\
	&= \frac{ {\mathbb{V}\rm ar}_{X_t^o=0}[f(x+X_T^o)] }{M_0}
	\leq \frac{C_0}{M_0},
\end{align*}
where the  estimate on the variance follows  from Lemma \ref{lem:sup_est} because $f$ has polynomial growth of order at most $\tilde{\alpha}+3$ (see Remark \ref{rem:f.hessian.integrable})).
%
%

\vspace{0.5em}
\emph{Step 2.}
Set $v^{1,(t,x)}:=\mathbb{E}_{X_t^o=0}[\int_t^T |w(s, x+ X_s^{o})|ds ]$.
We claim that 
\begin{align}\label{eq:MC_est1}
	\left\| v^{1,(t,x)}- \widehat{v}^{1,(t,x)}_{N,M_1,M_2} \right\|_{L^2}
	\leq C_1\Big(\frac{1}{\sqrt N}+ \frac{1}{\sqrt M_1}+\frac{1}{\sqrt{M_2}}\Big).
\end{align}

The proof of \eqref{eq:MC_est1} consists of several steps.

\vspace{0.5em}
{\it Step\;2a:} 
For every $N\in\mathbb{N}$, set 
$A^{t,x}_N:=\sum_{i=0}^{N-1}\mathbb{E}_{X_t^o=0}[|w(t_i,x+X_{t_i}^o)|]\Delta t$.
Then, by the triangle inequality,
\begin{align*}
	\begin{aligned}
		\left|v^{1,(t,x)}- A^{t,x}_N \right|
		&\leq \sum_{i=0}^{N-1} \int_{t_i}^{t_{i+1}}\mathbb{E}_{X_t^o=0} \Big[ \big|w\left(s, x+ X_s^{o}\right)-w\left(t_i, x+ X_{t_i}^{o}\right) \big| \Big]ds
	\end{aligned}
\end{align*}
and clearly $\Big|w\left(s, x+ X_s^{o}\right)-w\left(t_i, x+ X_{t_i}^{o}\right) \Big|\leq {\rm I}_s^i + {\rm II}_s^i$ where
\begin{align*}
	{\rm I}_s^i := \big|w\left(s, x+ X_s^{o}\right)-w\left(s, x+ X_{t_i}^{o}\right) \big| ,
	\quad {\rm II}_s^i:= \big|w\left(s, x+ X_{t_i}^{o}\right)-w\left(t_i, x+ X_{t_i}^{o}\right) \big|.
\end{align*}
We estimate both terms separately.
%
Since $w(s,y)=\mathbb{E}_{\widetilde{X}_s^o=0}[\nabla_xf(y+\widetilde{X}_T^o)]$ for every $y\in \mathbb{R}^d$ (see \eqref{eq:Feyn_1}), we have that for every $i=1,\dots,N$,
\[\operatorname{I}_s^{i}=\Big| \mathbb{E}_{\widetilde X_s^o=0}\Big[\nabla_xf(x+X_s^o+\widetilde{X}_T^o)-\nabla_xf(x+X_{t_i}^o+\widetilde{X}_T^o)\Big] \Big|.\]
And since  $D^2_xf$ has polynomial growh of order $\tilde{\alpha}+1$ (see Remark \ref{rem:f.hessian.integrable}),  an application of the Cauchy-Schwartz inequality implies that
\begin{align*}
	\mathbb{E}_{X_t^o=0}[ \operatorname{I}_s^{i}]
	\leq  C_2\;\mathbb{E}_{  X_t^o,\widetilde{X}_t^o=0 } \bigg[\Big(1+\hspace{-1.2em}\sup_{s\in[t_i,t_{i+1}]}|x+X_s^o+\widetilde{X}_T^o|^{\widetilde \alpha+1}\Big)^2\bigg]^{\frac{1}{2}}\hspace{-0.2em}\mathbb{E}_{X_t^o=0}\Big[|X_s^o-X_{t_i}^o|^2\Big]^{\frac{1}{2}}.
\end{align*}
Thus, by Lemma  \ref{lem:sup_est}, $\mathbb{E}_{X^o_t=0}[ \operatorname{I}_s^{i}]\leq C_3/\sqrt N$.


%
%
%

Next, by Proposition \ref{pro:main}\;(ii) (see also Lemma\;\ref{lem:BSDE}), applying (in that order) It\^o's formula, Jensen's inequality, and the It\^o-isometry ensures that 
\begin{align*}
	\mathbb{E}_{X^o_t=0}[{\rm II}^i_s]
	&=\mathbb{E}_{X_t^o,\widetilde{X}_t^o=0 }\bigg[ \bigg|\int_{t_i}^s \operatorname{J}_xw\big(u,x+X_{t_i}^o+\widetilde{X}_u^o\big)\sigma^o d\widetilde W_u\bigg|\bigg]  \\
	&\leq  \mathbb{E}_{X_t^o,\widetilde{X}_t^o=0} \bigg[ \int_{t_i}^s\|\operatorname{J}_xw\big(u,x+X_{t_i}^o+\widetilde{X}_u^o\big)\sigma^o\|_{\operatorname{F}}^2 du  \bigg]^{\frac{1}{2}}.
\end{align*}
Finally, since ${\rm J}_x w(s,y) =  \mathbb{E}_{\widetilde{X}_t^o=0}[D^2_x f(y+ \widetilde{X}_T^o)]$ (see \eqref{eq:Feyn_2}) and $D_x^2 f$ has polynomial growth of order $\tilde{\alpha}+1$ (see Remark \ref{rem:f.hessian.integrable}), it follows from   Lemma \ref{lem:sup_est} that $\mathbb{E}_{X_t^o=0}[{\rm II}^i_s] \leq C_4 \sqrt{s-t_i} \leq C_5/\sqrt N$.

\vspace{0.5em}
\noindent	
{\it Step 2b:} For every $N,{M}_1\in \mathbb{N}$, set 
	$B_{N,M_1}^{t,x}:= \sum_{i=0}^{N-1}\frac{1}{M_1}\sum_{m=1}^{M_1} \big|w(t_i,x+\xi_{t_i,t}^m)\big| \Delta.$
Recall that $A^{t,x}$ was defined in Step 2a.
Then, by the triangle inequality,
%
\begin{align*}
	\big\|A^{t,x}_N-B_{N,M_1}^{t,x} \big\|_{L^2}&\leq \sum_{i=0}^{N-1} \bigg\| \mathbb{E}_{X_s^o=0}[|w(t_i,x+X_{t_i}^o)|]-\frac{1}{M_1}\sum_{m=1}^{M_1}\big|w(t_i,x+\xi_{t_i,t}^m) \big|\bigg\|_{L^2}\Delta t  \\
	&=\sum_{i=0}^{N-1} \sqrt{ \frac{ \mathbb{V}{\rm ar}_{X_s^o=0}[|w(t_i,x+X^o_{t_i})|]  }{M_1}  }\Delta t 
	\leq \frac{C_6}{\sqrt{M}_1},
\end{align*}
where we again used that $w(s,y)=\mathbb{E}_{X_s^o=0}[\nabla_xf(y+X_T^o)]$, the polynomial growth of $\nabla_x f$, and Lemma \ref{lem:sup_est} to bound the variance.

\vspace{0.5em}
\noindent	
{\it Step 2c:} For every $N,{M}_1,M_2\in \mathbb{N}$, let $\widehat{v}^{1,(t,x)}_{N,M_1,M_2}$ be given in \eqref{eq:MC_estimates} and recall that $B_{N,M_1}^{t,x}$ was defined in Step 2b. 
Then, by the triangle inequality 
\begin{align*}
	\Big\|B_{N,M_1}^{t,x}-\widehat{v}^{1,(t,x)}_{N,M_1,M_2}\Big\|_{L^2}
	&\leq \sum_{i=0}^{N-1} \frac{\Delta t}{M_1}\sum_{m=1}^{M_1} \sum_{k=1}^d D_{M_2}^{k,m,i}, \quad\text{where}\\
	D_{M_2}^{k,m,i}&:=\bigg\| |w_k(t_i,x+\xi_{t_i,t}^m)|-\Big|\frac{1}{M_2}\sum_{n=1}^{M_2}\partial_{x_k}f(x+\xi_{t_i,t}^m+\widetilde {\xi}^n_{T,t_i}) \Big|  \bigg\|_{L^2}.
\end{align*}
Note that $w_k(t_i,x+\xi_{t_i,t}^m)=\mathbb{E}_{\widetilde{X}_{t_i}^o=0}[\partial_{x_k}f(x+\xi_{t_i,t}^m+\widetilde{X}_T^o)]$ and $\widetilde \xi_{T,t_i}^1,\dots,\widetilde \xi_{T,t_i}^{M_2}$ are the i.i.d~random samples of $\widetilde X_{T}^o$ given $\widetilde X_{t_i}^o=0$, independent of $ \xi_{T,t_i}^m$.
Thus, by the tower property, 
\[ (D_{M_2}^{k,m,i})^2
=\mathbb{E}\bigg[ \mathbb{E} \bigg[ \frac{ {\rm \mathbb{V}ar}_{\widetilde{X}_{t_i}^o=0}[ |\partial_{x_k} f(x+ \xi^m_{t_i,t}  + \widetilde{X}_T^o) |] }{M_2} \, \bigg| \xi^m_{t_i,t} \bigg] \bigg]
\leq \frac{C_7}{M_2},\]
where the inequality follows from  Lemma \ref{lem:sup_est} and using the growth condition on $f$ (see Remark \ref{rem:f.hessian.integrable}).
Thus $\|B_{N,M_1}^{t,x}-\widehat{v}^{1,(t,x)}_{N,M_1,M_2}\|_{L^2}\leq C_8/\sqrt{M_2}$.

The combination of Step 2a-2c shows the claim in Step 2.


	\vspace{0.5em}
	\emph{Step 3.}
	Set $v^{2,(t,x)}:= \mathbb{E}_{X^o_t=0}[\int_t^T  \| \mathrm{J}_xw(s, x+X_s^{o} ) \sigma^o \|_{\operatorname{F}} ) ds]$.
	We claim that 
	\[
	\left\| v^{2,(t,x)}- \widehat{v}^{2,(t,x)}_{N,M_1,M_2} \right\|_{L^2}
	\leq C_9\Big(\frac{1}{\sqrt N}+ \frac{1}{\sqrt M_1}+\frac{1}{\sqrt{M_2}}\Big).
	\]
	Indeed, the proof follows from the same arguments as presented for Step 2.

	\vspace{0.5em}
	Combining the estimates established in Step 1-3 concludes the proof.
\end{proof}

\begin{proof}[Proof Theorem \ref{thm:MC}\;(ii)] We note that for every $N,M_0,M_1,M_2\in\mathbb{N}$, the complexity $\mathfrak{C}(N,M_0,M_1,$ $M_2)$ is given 
	\[
	\mathfrak{C}(N,M_0,M_1,M_2)=M_0d+N M_1(M_2+1) d+N M_1(M_2+1+d)d^2
	\]
	where 
	\begin{itemize}[leftmargin=2.5em]
		\item [$1.$] $M_0d$ is the number of ($d$-dimensional) random random samples for the Monte Carlo approximation used in $\widehat{v}^{0,(t,x)}_{M_0}= \frac{1}{M_0} \sum_{l=1}^{M_0} f(x+\xi_{T,t}^l)$;
		\item [$2.$] $N M_1(M_2+1) d$ is the sum of the number $NM_1M_2d$ of samples for nested Monte Carlo approximation and the number $NM_1d$  of Euclidean norm evaluations used in $\widehat{v}^{1,(t,x)}_{N,M_1,M_2}= \sum_{i=0}^{N-1}\frac{1}{M_1}\sum_{m=1}^{M_1}|\frac{1}{M_2}\sum_{n=1}^{M_2}\nabla_xf(x+\xi_{t_i,t}^m+\widetilde {\xi}^n_{T,t_i})|\Delta t$;
		\item [$3.$] $N M_1(M_2+d+1)d^2$ is the sum of: the number $NM_1M_2d^2$ of samples for the nested Monte-Carlo approximation, the number $NM_1d^3$ of the matrix multiplications, and the number $NM_1d^2$ of Frobenius norm evaluations used in 
		$\widehat{v}^{2,(t,x)}_{N,M_1,M_2}=\sum_{i=0}^{N-1}\frac{1}{M_1}\sum_{m=1}^{M_1}\|\frac{1}{M_2}\sum_{n=1}^{M_2}$ $D^2_xf(x+\xi_{t_i,t}^m+\widetilde {\xi}^n_{T,t_i})\sigma^o\|_{\operatorname{F}}\Delta t$.
	\end{itemize} 
	Hence, this ensures Theorem \ref{thm:MC}\;(ii) to hold. 
\end{proof}

\vspace{0.6cm}
\section*{Declarations}
\noindent 
\textbf{Conflict of interest:} The authors have no conflicts of interest to declare that are relevant to the content of
this article.

\newpage

\appendix
\section*{Appendix}
\section{Proof of Proposition  \ref{pro:weak.dense}}\label{sec:supple}
As explained in Section~\ref{sec:StrongWeak}, 
Proposition~\ref{pro:weak.dense} constitutes a minor generalization of the analog result presented in  \cite{DNS2012}.
In particular, the proof presented in \cite{DNS2012} can be copied line by line and requires only minor changes for our generalization, which are presented in detail here.

First, we introduce some notions, often employed in this section. Recalling the set $\mathcal{B}^\varepsilon$ given in \eqref{eq:uncertain.char}, we set for any $\varepsilon\geq 0$,
\begin{align}\label{eq:uncertain.char.2}
	\mathcal{B}^{\varepsilon,1}:=\{b:(b,\sigma)\in \mathcal{B}^\varepsilon\},\qquad \mathcal{B}^{\varepsilon,2}:=\{\sigma:(b,\sigma)\in \mathcal{B}^\varepsilon\}
\end{align}
so that $\mathcal{B}^\varepsilon= \mathcal{B}^{\varepsilon,1}\times \mathcal{B}^{\varepsilon,2}$ and denote by 
\begin{align}\label{eq:project}
	\Pi_{\mathcal{B}^{\varepsilon,1}}:\mathbb{R}^d\ni x \rightarrow \Pi_{\mathcal{B}^{\varepsilon,1}}(x)\in\mathcal{B}^{\varepsilon,1},\qquad
	\Pi_{\mathcal{B}^{\varepsilon,2}}:\mathbb{R}^{d\times d}\ni x \rightarrow  \Pi_{\mathcal{B}^{\varepsilon,2}}(x)\in\mathcal{B}^{\varepsilon,2}
\end{align}
the Euclidean projections into the convex, closed sets $\mathcal{B}^{\varepsilon,1}$ and $ \mathcal{B}^{\varepsilon,2}$ respectively. 

For every $n\in \mathbb{N}$ and $t\in[0,T)$, denote by $
t_k^{n}:= t+\frac{T-t}{n}k$ for $k=0,1,\dots,n$. Furthermore, for any $\mathbb{P}\in {\cal P}^{\operatorname{as}}_{\operatorname{sem}}$ denote by $b^{\mathbb{P}}=\frac{dB^{\mathbb{P}}}{ds}$ the first differential characteristics of $X$ under $\mathbb{P}$, and~by 
\begin{align}\label{eq:sqrt_2nd_char}
	\sigma^{\mathbb{P}}:= (c^{\mathbb{P}})^{\frac{1}{2}}
\end{align}
where $c^{\mathbb{P}}=\frac{dC^{\mathbb{P}}}{ds}$ is the second differential characteristics of $X$ under $\mathbb{P}$. Then we define by $b^{\mathbb{P},(n)}$ and $\sigma^{\mathbb{P},(n)}$ piecewise constant processes defined on $[t,T]$ such that 
\begin{align}\label{eq:discret3}
	\begin{aligned}
		b_s^{\mathbb{P},(n)}&:=  {\bf 1}_{\{s\in [t,t_1^{n}]\}}~{b}^{o}+\sum_{k=1}^{n-1} {\bf 1}_{\{s\in(t_{k}^{n},t_{k+1}^{n}]\}}~\Pi_{\mathcal{B}^{\varepsilon,1}}\Bigg[\frac{n}{T-t}\int_{t_{k-1}^{n}}^{t_k^{n}}{b}_s^{\mathbb{P}}ds\Bigg],\\
		\sigma_s^{\mathbb{P},(n)}&:=  {\bf 1}_{\{s\in [t,t_1^{n}]\}}~\sigma^{o}+\sum_{k=1}^{n-1} {\bf 1}_{\{s\in(t_{k}^{n},t_{k+1}^{n}]\}}~\Pi_{\mathcal{B}^{\varepsilon,2}}\Bigg[\frac{n}{T-t}\int_{t_{k-1}^{n}}^{t_k^{n}}\sigma^{\mathbb{P}}_sds\Bigg].
	\end{aligned}
\end{align} 

\begin{rem} 
	Fix any $\varepsilon < \lambda_{\min}(\sigma^o)$ and recall ${\cal P}^\varepsilon(t,x)$ given in \eqref{eq:uncertain.weak}. Then under any $\mathbb{P}\in{\cal P}^\varepsilon(t,x)$, 
	since $\|\sigma^{\mathbb{P}}_s-\sigma^o \|_{\operatorname{F}}< \lambda_{\min}(\sigma^o)$ ${\mathbb{P}}\otimes ds$-a.e., by Remark~\ref{rem:sigma.inverse}, there exists the corresponding inverse matrix $((\sigma^{\mathbb{P}}_s)^{-1})_{s\in[t,T]}$ 
	${\mathbb{P}}\otimes ds$-almost every $(\omega,s)\in\Omega^{0,x}\times [t,T]$. 
	Therefore, if we denote by ${M}^{\mathbb{P},t}:=({M}_s^{\mathbb{P},t})_{s\in [t,T]}$ the $(\mathbb{F}^{X},\mathbb{P})$-local martingale term of $({X}_s)_{s\in [t,T]}$ satisfying $M_t^{\mathbb{P},t}=0$, an application of L\'evy's theorem ensures that for $s\in[t,T]$,
	\begin{align}\label{eq:bm.const}
		{W}_s^{\mathbb{P},t}:= \int_t^s ({\sigma}_u^{\mathbb{P}})^{-1}d{M}_u^{\mathbb{P},t}
	\end{align}
	is a $d$-dimensional  Brownian motion defined on $[t,T]$ under ${\mathbb{P}}$ satisfying ${W}_t^{\mathbb{P},t}=0$. 
\end{rem}

For every $\varepsilon< \lambda_{\min}(\sigma^o)$ and $\mathbb{P}\in{\cal P}^\varepsilon(t,x)$, the piecewise constant processes $b^{\mathbb{P},(n)}$ and $\sigma^{\mathbb{P},(n)}$, $n\in\mathbb{N}$, given in  \eqref{eq:discret3} and the Brownian motion ${W}^{\mathbb{P},t}=(W^{\mathbb{P},t}_s)_{s\in[t,T]}$ given in \eqref{eq:bm.const} enable to define $X^{\mathbb{P},(n)}$ (that is defined on $[0,T]$) for every $n\in\mathbb{N}$ by letting 
\begin{align}\label{eq:X.const}
	{X}^{\mathbb{P},(n)}:=x\oplus_t \left(x+ \int_t^\cdot {b}_s^{\mathbb{P},(n)}ds + \int_t^\cdot {\sigma}_s^{\mathbb{P},(n)}d{W}_s^{\mathbb{P},t}\right).
\end{align}

Finally, for any $\mathbb{P}\in {\cal P}(\Omega^{0,x})$ denote\footnote{See, e.g. \cite[Chapter IV.2, p.124]{protter2005stochastic} for the definition.} by ${\cal H}^2(\Omega^{0,x},{\cal F}^{0,x}, \mathbb{F}^{X}, \mathbb{P})$ the space of all semimartingales $S$ defined on $[0,T]$ such that 
\begin{align}\label{eq:semimtg.space}
	\lVert S \rVert_{{\cal H}^{2}_\mathbb{P}} := \mathbb{E}^{\mathbb{P}}\left[ \langle N,N \rangle_T \right]^{{1}/{2}}+  \mathbb{E}^{\mathbb{P}}\Bigg[ \Big( \int_0^T \lvert dA_t \rvert \Big)^{2} \Bigg]^{{1}/{2}}<\infty,
\end{align}
where $N=(N_t)_{t\in[0,T]}$ and $A=(A_t)_{t\in[0,T]}$ denote the $(\mathbb{F}^X,\mathbb{P})$-local martingale and $\mathbb{F}^{X}$-predictable finite variation process of $S$, respectively (i.e., the canocial decomposition). 

\begin{lem}\label{lem:discret}
	Suppose that Assumption \ref{as:sigma.inverse} is satisfied. Let $\varepsilon < \lambda_{\min}(\sigma^o)$, $(t,x)\in[0,T)\times \mathbb{R}^d$, and $\mathbb{P}\in {\cal P}^\varepsilon(t,x)$. Let $(X^{\mathbb{P},(n)})_{n\in\mathbb{N}}$ be the sequence defined in \eqref{eq:X.const}. Then ${X}^{\mathbb{P},(n)}$ converges to $X$ in ${\cal H}^2({\Omega}^{0,x},{\cal F}^{0,x},\mathbb{F}^{X},{\mathbb{P}})$, i.e. as $n\rightarrow \infty$
	\begin{align*}
		\|{X}^{\mathbb{P},(n)}-X\|_{{\cal H}^{2}_{\mathbb{P}}}\rightarrow 0.
	\end{align*}
\end{lem}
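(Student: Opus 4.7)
The plan is to identify the canonical decomposition of $X^{\mathbb{P},(n)}-X$ under $\mathbb{P}$, reduce the $\mathcal{H}^2$-norm convergence to $L^2(\mathbb{P}\otimes du)$-convergence of the drift and diffusion approximations, and then conclude by Lebesgue's differentiation theorem together with dominated convergence. First, since $\varepsilon<\lambda_{\min}(\sigma^o)$, the matrix $\sigma^{\mathbb{P}}_s=(c^{\mathbb{P}}_s)^{1/2}$ is invertible $\mathbb{P}\otimes ds$-a.e.\ (by Weyl's inequality, as $\|\sigma^{\mathbb{P}}_s-\sigma^o\|_{\operatorname{F}}\leq\varepsilon<\lambda_{\min}(\sigma^o)$), so $W^{\mathbb{P},t}$ in \eqref{eq:bm.const} is a genuine Brownian motion on $[t,T]$ and $M^{\mathbb{P},t}_s=\int_t^s \sigma^{\mathbb{P}}_u\,dW^{\mathbb{P},t}_u$. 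Consequently the canonical decomposition of $X$ on $[t,T]$ is $X_\cdot=x+\int_t^\cdot b^{\mathbb{P}}_u\,du+\int_t^\cdot \sigma^{\mathbb{P}}_u\,dW^{\mathbb{P},t}_u$, and by \eqref{eq:X.const} that of $X^{\mathbb{P},(n)}$ is given analogously with $(b^{\mathbb{P},(n)},\sigma^{\mathbb{P},(n)})$. Taking the difference, the local martingale part of $X^{\mathbb{P},(n)}-X$ is $N^{(n)}:=\int_t^\cdot (\sigma^{\mathbb{P},(n)}_u-\sigma^{\mathbb{P}}_u)\,dW^{\mathbb{P},t}_u$ with $\langle N^{(n)},N^{(n)}\rangle_T=\int_t^T\|\sigma^{\mathbb{P},(n)}_u-\sigma^{\mathbb{P}}_u\|_{\operatorname{F}}^2\,du$, and the finite variation part $A^{(n)}:=\int_t^\cdot(b^{\mathbb{P},(n)}_u-b^{\mathbb{P}}_u)\,du$ has total variation $\int_t^T|b^{\mathbb{P},(n)}_u-b^{\mathbb{P}}_u|\,du$. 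After an application of Cauchy--Schwarz to the finite-variation term, $\|X^{\mathbb{P},(n)}-X\|_{\mathcal{H}^2_{\mathbb{P}}}^2$ is dominated by a constant times $\mathbb{E}^{\mathbb{P}}[\int_t^T\|\sigma^{\mathbb{P},(n)}_u-\sigma^{\mathbb{P}}_u\|_{\operatorname{F}}^2\,du]+\mathbb{E}^{\mathbb{P}}[\int_t^T|b^{\mathbb{P},(n)}_u-b^{\mathbb{P}}_u|^2\,du]$.

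Next I would verify uniform boundedness of the integrands and pointwise convergence. Since $\mathcal{B}^{\varepsilon,1}$ and $\mathcal{B}^{\varepsilon,2}$ are closed Euclidean/Frobenius balls, hence convex, closed and bounded, the coefficients $b^{\mathbb{P}}_u\in\mathcal{B}^{\varepsilon,1}$, $\sigma^{\mathbb{P}}_u\in\mathcal{B}^{\varepsilon,2}$ $\mathbb{P}\otimes du$-a.e.\ by \eqref{eq:uncertain.weak}, and by Fubini the averages appearing in \eqref{eq:discret3} lie in these convex sets $\mathbb{P}$-a.s., so the projections $\Pi_{\mathcal{B}^{\varepsilon,j}}$ act as the identity on them; thus all four processes are uniformly bounded in $n,u,\omega$, and the squared integrands above are bounded by a constant. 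Pointwise convergence follows from Lebesgue's differentiation theorem pathwise: for $\mathbb{P}$-a.s.\ $\omega$ and Lebesgue-a.e.\ $s\in(t,T]$, writing $h_n:=(T-t)/n$ and letting $k=k(n,s)$ be the integer with $s\in(t^n_{k},t^n_{k+1}]$, the averaging interval $I_n:=(t^n_{k-1},t^n_{k}]$ has length $h_n$ and lies within distance $h_n$ of $s$, so by the Lebesgue differentiation theorem for nicely shrinking sets (in Rudin's sense) one has $\tfrac{1}{h_n}\int_{I_n} b^{\mathbb{P}}_u(\omega)\,du\to b^{\mathbb{P}}_s(\omega)$ at every Lebesgue point. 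Combining with the fact that $\Pi_{\mathcal{B}^{\varepsilon,1}}$ is the identity on these averages, this yields $b^{\mathbb{P},(n)}_s\to b^{\mathbb{P}}_s$ $\mathbb{P}\otimes du$-a.e., and analogously $\sigma^{\mathbb{P},(n)}_s\to\sigma^{\mathbb{P}}_s$. Uniform boundedness together with pointwise a.e.\ convergence then allows dominated convergence on $(\Omega^{0,x}\times[t,T],\mathbb{P}\otimes du)$ to be applied, delivering $L^2$-convergence of the coefficients and hence $\|X^{\mathbb{P},(n)}-X\|_{\mathcal{H}^2_{\mathbb{P}}}\to 0$.

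The main technical obstacle is the use of the \emph{shifted/delayed} averages: the piecewise-constant value on $(t^n_k,t^n_{k+1}]$ is computed from the \emph{previous} interval $(t^n_{k-1},t^n_k]$ rather than from an interval containing $s$. One therefore cannot invoke the standard Lebesgue differentiation theorem (based on intervals centred at $s$), but rather its version for averaging sets that are nicely shrinking to $s$ (here $\mathrm{dist}(s,I_n)\leq |I_n|$ with $|I_n|\to 0$), which is what guarantees pointwise convergence at Lebesgue points. A secondary bookkeeping step is carefully identifying $N^{(n)}$ and $A^{(n)}$ as the (unique) local-martingale/finite-variation parts of $X^{\mathbb{P},(n)}-X$ under $\mathbb{P}$, which uses that $W^{\mathbb{P},t}$ is well-defined and Brownian (hence the stochastic integrals $\int \sigma^{\mathbb{P},(n)}\,dW^{\mathbb{P},t}$ and $\int \sigma^{\mathbb{P}}\,dW^{\mathbb{P},t}$ are genuine local martingales under $\mathbb{P}$).
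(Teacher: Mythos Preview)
Your proof is correct and follows essentially the same route as the paper: identify the canonical decomposition of $X^{\mathbb{P},(n)}-X$, reduce the $\mathcal{H}^2$-norm to the $L^2(\mathbb{P}\otimes ds)$-distances of the drift and diffusion approximations, and conclude by uniform boundedness plus dominated convergence. You are somewhat more explicit than the paper on two points---that the projections $\Pi_{\mathcal{B}^{\varepsilon,j}}$ act as the identity on the time-averages (by convexity of $\mathcal{B}^{\varepsilon,j}$), and that the pointwise convergence of the \emph{shifted} averages requires the Lebesgue differentiation theorem for nicely shrinking sets rather than the centred version---whereas the paper simply asserts $\int_t^T\|\sigma^{\mathbb{P},(n)}_s-\sigma^{\mathbb{P}}_s\|_{\operatorname{F}}\,ds\to 0$ ``by definition of $\sigma^{\mathbb{P},(n)}$'' and then invokes dominated convergence.
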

\begin{proof} Let ${b}^{\mathbb{P}}$ be the first differential characteristic of $X$ and ${\sigma}^\mathbb{P}$ be given in \eqref{eq:sqrt_2nd_char}. Using the Brownian motion $W^{\mathbb{P},t}$ defined in~\eqref{eq:bm.const}, we have that $\mathbb{P}$-a.s.
	\begin{align}\label{eq:discret2}
		{X} = x \oplus_t\left(x+\int_t^\cdot {b}_s^{\mathbb{P}} ds+\int_t^\cdot \sigma^{\mathbb{P}}_s d{W}^{\mathbb{P},t}_s\right).
	\end{align}
	That is, the canonical process $X$ can be represented by an It\^o $(\mathbb{F}^{X},\mathbb{P})$-semimartingale with constant $x$-path up to time $t$. 
	
	Recall $\| \cdot \|_{{\cal H}^2_{\mathbb{P}}}$ defined in~\eqref{eq:semimtg.space} and $(b^{\mathbb{P},(n)}, \sigma^{\mathbb{P},(n)})$, $n\in\mathbb{N}$, defined in~\eqref{eq:discret3}. By \eqref{eq:discret2}, H\"older's inequality (with exponent $2$) ensures that for every $n\in \mathbb{N}$,
	\begin{align}\label{eq:estimate.norm}
		\big\lVert {X}^{\mathbb{P},(n)}-X \big\rVert_{{\cal H}^{2}_{\mathbb{P}}}\leq   \mathbb{E}^{{\mathbb{P}}}\Bigg[\int_t^T\| {\sigma}_s^{\mathbb{P},(n)}-{\sigma}_s^{\mathbb{P}} \|_{\operatorname{F}}^2 ds \Bigg]^{\frac{1}{2}}+(T-t)\;\mathbb{E}^{{\mathbb{P}}}\Bigg[\int_t^T\lvert {b}_s^{\mathbb{P},(n)}-{b}^{{\mathbb{P}}}_s \rvert^2 ds\Bigg]^{\frac{1}{2}}.
	\end{align} 
	In particular, by the definition of $\sigma^{\mathbb{P},(n)}$ in \eqref{eq:discret3}, $\int_t^T\|{\sigma}_s^{\mathbb{P},(n)}-{\sigma}_s^{\mathbb{P}}\|_{\operatorname{F}} ds\rightarrow 0$ as $n\rightarrow \infty$ for every $\omega \in \Omega^{0,x}$. Furthermore, since ${\sigma}^{\mathbb{P},(n)}$,  ${\sigma}^{\mathbb{P}}$ are uniformly bounded, the dominated convergence theorem implies that the first term of the right hand side of~\eqref{eq:estimate.norm} vanishes as $n\rightarrow \infty$. The same arguments ensure that the second term vanishes. This completes the proof. 
\end{proof}

\begin{lem}\label{lem:random}
Suppose that Assumption \ref{as:sigma.inverse} is satisfied. Let $\varepsilon < \lambda_{\min}(\sigma^o)$, $(t,x)\in[0,T)\times \mathbb{R}^d$, and $\mathbb{P}\in {\cal P}^\varepsilon(t,x)$. Let $W^{\mathbb{P},t}$ and $({X}^{\mathbb{P},(n)})_{n\in\mathbb{N}}$ be 
given in \eqref{eq:bm.const} and \eqref{eq:X.const}. Then for each $n\in\mathbb{N}$ and $p\geq 1$, the law of ${X}^{\mathbb{P},(n)}$ is contained in the $\tau_p$-closure of the convex hull of the laws of 
\begin{align}\label{eq:random3}
	\left\{x\oplus_t \left(x+\int_t^\cdot {\mu}^u(s,{W}^{\mathbb{P},t})ds+ \int_t^\cdot {\Sigma}^v(s,{W}^{\mathbb{P},t})d{W}^{\mathbb{P},t}_s\right)\Big|(u,v)\in (0,1)^{nd} \times(0,1)^{nd^2} \right\},
\end{align}
where for every $(u,v)\in(0,1)^{nd}\times (0,1)^{nd^2}$, ${\mu}^u:[t,T]\times \Omega^{t}\rightarrow {\cal B}^{\varepsilon,1}$ and ${\Sigma}^{v}:[t,T]\times \Omega^{t}\rightarrow {\cal B}^{\varepsilon,2}$ are adapted\footnote{An adapted functional on $\Omega^t$ is a mapping $\theta:[t,T]\times \Omega^{t}\rightarrow \mathbb{R}$ such that $\theta(s,\cdot)$ is ${\cal F}_s^{W^t}$-measurable for every $s\in[t,T]$ (noting that $\mathbb{F}^{W^t}$ is the raw filtration of the canonical process $W^t$ defined on $[t,T]$; see Section~\ref{sec:thm:main}).~Similarly, an ($\mathbb{R}^d$-valued) adapted functional on $\Omega^t$ is a mapping $\Theta:=(\theta_1,\dots,\theta_d)^\top:[t,T]\times \Omega^{t}\rightarrow \mathbb{R}^d$ such that each $\theta_i$, $i=1,\dots,d$,
	is an adapted functional on~$\Omega^t$.} Borel functionals on $\Omega^t$.
\end{lem}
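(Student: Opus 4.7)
The plan is to follow the convexification scheme of Denis--Hu--Peng \cite{DNS2012}: first rewrite the adapted piecewise-constant coefficients of $X^{\mathbb{P},(n)}$ via Doob--Dynkin as Borel functionals of $W^{\mathbb{P},t}$, then approximate these by \emph{simple} (finitely-valued) adapted functionals, and finally unfold the resulting law as a finite convex combination of laws of SDEs with deterministic piecewise-constant coefficients encoded by the finite parameterization $(u,v)\in(0,1)^{nd}\times(0,1)^{nd^2}$.

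More concretely, since on $(t_k^n,t_{k+1}^n]$ the coefficients $b^{\mathbb{P},(n)}$ and $\sigma^{\mathbb{P},(n)}$ from \eqref{eq:discret3} are $\mathcal{F}_{t_k^n}^{W^{\mathbb{P},t}}$-measurable with values in the compact sets $\mathcal{B}^{\varepsilon,1}$ and $\mathcal{B}^{\varepsilon,2}$, they equal Borel functionals $\widetilde{g}_k(W^{\mathbb{P},t})$ and $\widetilde{h}_k(W^{\mathbb{P},t})$ depending only on the path up to time $t_k^n$. Standard $L^2$-approximation on the compact value sets then yields, for each $m\in\mathbb{N}$, finite $\mathcal{F}_{t_k^n}^{W^{\mathbb{P},t}}$-measurable partitions $\{A^{(m)}_{j,k}\}_{j=1}^{N_{k,m}}$ of $\Omega^t$ and constants $c^{(m)}_{j,k}\in\mathcal{B}^{\varepsilon,1}$, $d^{(m)}_{j,k}\in\mathcal{B}^{\varepsilon,2}$ such that $\widetilde{g}_k^{(m)}:=\sum_j\mathbf{1}_{A^{(m)}_{j,k}}c^{(m)}_{j,k}\to\widetilde{g}_k$ and $\widetilde{h}_k^{(m)}:=\sum_j\mathbf{1}_{A^{(m)}_{j,k}}d^{(m)}_{j,k}\to\widetilde{h}_k$ in $L^2(\mathbb{P}_0^t)$ as $m\to\infty$. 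Plugging these simple coefficients into \eqref{eq:X.const} defines a process $X^{(n,m)}$, and the Lipschitz SDE bound (BDG + Gronwall, cf.\ the proof of Lemma~\ref{lem:discret}) gives $\|X^{(n,m)}-X^{\mathbb{P},(n)}\|_{\mathcal{H}^2_{\mathbb{P}_0^t}}\to0$.

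To realize $\mathrm{Law}(X^{(n,m)})$ as a finite convex combination of elements in \eqref{eq:random3}, I fix Borel surjections $\psi_b\colon(0,1)^d\to\mathcal{B}^{\varepsilon,1}$ and $\psi_\sigma\colon(0,1)^{d^2}\to\mathcal{B}^{\varepsilon,2}$, enumerate the branches $\mathbf{j}=(j_1,\ldots,j_n)\in\prod_k\{1,\ldots,N_{k,m}\}$, and choose representative parameters $(u_\mathbf{j},v_\mathbf{j})\in(0,1)^{nd}\times(0,1)^{nd^2}$ with $\psi_b(u_{\mathbf{j},k})=c^{(m)}_{j_k,k}$ and $\psi_\sigma(v_{\mathbf{j},k})=d^{(m)}_{j_k,k}$. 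Setting $\mu^{u_\mathbf{j}}(s,\omega):=\sum_k\mathbf{1}_{(t_k^n,t_{k+1}^n]}(s)\,\psi_b(u_{\mathbf{j},k})$ and analogously $\Sigma^{v_\mathbf{j}}$, the candidate convex combination is
\[
\nu^{(m)}:=\sum_{\mathbf{j}}\mathbb{P}_0^t\!\bigl(\textstyle\bigcap_k A^{(m)}_{j_k,k}\bigr)\cdot\mathrm{Law}\!\left(x\oplus_t\Bigl(x+\textstyle\int_t^\cdot \mu^{u_\mathbf{j}}(s,W^{\mathbb{P},t})\,ds+\int_t^\cdot \Sigma^{v_\mathbf{j}}(s,W^{\mathbb{P},t})\,dW^{\mathbb{P},t}_s\Bigr)\right),
\]
which lies manifestly in the convex hull of the set of laws appearing in \eqref{eq:random3}.

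The \emph{main obstacle} is the identification $\nu^{(m)}=\mathrm{Law}(X^{(n,m)})$: naively conditioning on $\bigcap_k A^{(m)}_{j_k,k}$ distorts the law of the Brownian increments on the earlier intervals, so the corresponding conditional SDE is not literally the deterministic-coefficient SDE indexed by $(u_\mathbf{j},v_\mathbf{j})$. The resolution is an iterated tower-property argument: since each $A^{(m)}_{j,k}\in\mathcal{F}_{t_k^n}^{W^{\mathbb{P},t}}$ is independent of the increments of $W^{\mathbb{P},t}$ on $(t_k^n,t_{k+1}^n]$, one successively conditions on $\mathcal{F}_{t_k^n}^{W^{\mathbb{P},t}}$ and, on the event $A^{(m)}_{j_k,k}$, replaces the piece of the coefficients on $(t_k^n,t_{k+1}^n]$ by the constants $(c^{(m)}_{j_k,k},d^{(m)}_{j_k,k})$, thereby matching the law of the $k$-th piece against that of the corresponding piece of the deterministic-coefficient SDE. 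Finally, the $p$-moment bounds from Lemma~\ref{lem:priori_estimate} (which apply to $X^{\mathbb{P},(n)}$, $X^{(n,m)}$, and to the deterministic-coefficient SDEs alike) provide the uniform integrability needed to upgrade the $\mathcal{H}^2_{\mathbb{P}_0^t}$-convergence $X^{(n,m)}\to X^{\mathbb{P},(n)}$ into $\tau_p$-convergence of the associated laws, placing $\mathrm{Law}(X^{\mathbb{P},(n)})$ in the $\tau_p$-closure of the convex hull of \eqref{eq:random3}.
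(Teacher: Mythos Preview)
Your proposal has a genuine gap at the step you yourself flag as the ``main obstacle'': the identification $\nu^{(m)}=\mathrm{Law}(X^{(n,m)})$ is simply false, and the iterated tower-property argument you sketch does not repair it. Conditioning on $\bigcap_k A_{j_k,k}^{(m)}$ alters the law of $W^{\mathbb{P},t}$ on $[t,t_k^n]$, so the conditional law of $X^{(n,m)}$ does \emph{not} coincide with the (unconditioned) law of the SDE with deterministic coefficients $(c_{j_k,k}^{(m)},d_{j_k,k}^{(m)})$. A one-line counterexample: take $d=1$, $n=2$, $(b^o,\sigma^o)=(0,1)$, drift on $(t_1^n,t_2^n]$ equal to $\mathrm{sign}(W_{t_1^n})$, and test with $\xi(X)=X_{t_1^n}(X_{t_2^n}-X_{t_1^n})$; then $\mathbb{E}[\xi(X^{(n,m)})]=\mathbb{E}|W_{t_1^n}|>0$ while your $\nu^{(m)}$ integrates $\xi$ to $0$. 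The tower property lets you replace the coefficient on $(t_k^n,t_{k+1}^n]$ by a constant \emph{on the event} $A_{j_k,k}^{(m)}$, but it does not decouple that event from the earlier path of $W^{\mathbb{P},t}$, which is precisely what your convex decomposition requires.

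The paper circumvents this by a different mechanism: it introduces \emph{external} randomness $(U,V)$ uniform on $(0,1)^{nd}\times(0,1)^{nd^2}$ and independent of a copy $\widehat W^t$ of the Brownian motion, and uses regular conditional distributions to write the coefficients as $\Theta_k^1(\widehat W^t|_{[t,t_k^n]},U^1,\dots,U^k)$, $\Theta_k^2(\widehat W^t|_{[t,t_k^n]},V^1,\dots,V^k)$ so that the joint law of $(\widehat W^t,\text{coefficients})$ matches that of $(W^{\mathbb{P},t},b^{\mathbb{P},(n)},\sigma^{\mathbb{P},(n)})$. Freezing $(U,V)=(u,v)$ yields processes $\widehat X^{(n);u,v}$ whose coefficients are \emph{adapted functionals of} $\widehat W^t$ (not merely deterministic constants as in your construction), and Fubini gives $\mathbb{E}^{\mathbb{P}}[\xi(X^{\mathbb{P},(n)})]=\mathbb{E}^{\widehat{\mathbb{P}}}[\xi(\widehat X^{(n);U,V})]\le\sup_{(u,v)}\mathbb{E}^{\widehat{\mathbb{P}}}[\xi(\widehat X^{(n);u,v})]$ for every $\xi\in C_p$. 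The conclusion then follows from Hahn--Banach (separation in the locally convex space $(\mathcal{P}^p,\tau_p)$), not from an exact convex decomposition. Two ingredients you are missing are thus (i) the noise-outsourcing representation with independent uniforms, which makes the ``conditioning'' harmless, and (ii) the Hahn--Banach step, which replaces the need for equality $\nu^{(m)}=\mathrm{Law}(X^{(n,m)})$ by the one-sided inequality above.
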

\begin{proof}
Fix $n\in\mathbb{N}$ and 
denote by $(\widehat{\Omega},\widehat{\cal F},\widehat{\mathbb{F}}:=(\widehat{\cal F}_t)_{t\in[0,T]},\widehat{\mathbb{P}})$ another filtered probability space which carries a $d$-dimensional Brownian motion $\widehat{W}^{t}$ defined on $[t,T]$ satisfying $\widehat{W}^{t}_t=0$ and a sequence $\{(U^{k},V^{k})\;|\;1\leq k\leq n\}$ of $(\mathbb{R}^d,\mathbb{R}^{d\times d})$-valued random variables such that the components $\{(U_i^{k},V_{j,l}^{k})\;|\;1\leq k\leq n; 1\leq i,j,l\leq d\}$ are i.i.d.\;uniformly distributed on $(0,1)$ and independent of $\widehat{W}^{t}$. For notational simplicity, set
\[
U:=(U^1,\dots,U^n),\qquad V:=(V^1,\dots,V^n). 
\]

\vspace{0.5em}
Recall $({b}^{\mathbb{P},(n)},{\sigma}^{\mathbb{P},(n)})$ and $X^{\mathbb{P},(n)}$ given in  \eqref{eq:discret3} and \eqref{eq:X.const}. For each $k=1,\dots,n$, denote by $C([t,t_{k}^n];\mathbb{R}^d)$ the set of all $\mathbb{R}^d$-valued, continuous functions on $[t,t_k^{n}]$ (recalling that $t_k^{n}=t+\frac{k(T-t)}{n}$ with $k=0,1,\dots,n$). Then the existence of regular conditional probability distributions guarantees that there exist measurable functions for every $k=1,\dots,n$ 
\begin{align*}
	\Theta^{1}_k:C([t,t_{k}^{n}];\mathbb{R}^d)\times (0,1)^{k d}\rightarrow \mathcal{B}^{\varepsilon,1},\qquad
	\Theta^{2}_k:C([t,t_{k}^{n}];\mathbb{R}^d)\times (0,1)^{k d^2}\rightarrow \mathcal{B}^{\varepsilon,2},
\end{align*}
such that the random variables defined by
\begin{align}\label{eq:regul.cond}
	\widehat{b}^{(n)}(k):= \Theta^{1}_k\left(\widehat{W}^{t}|_{[t,t_{k}^{n}]},U^{1},\dots,U^{k}\right),\;\; \widehat{\sigma}^{(n)}(k):= \Theta^{2}_k\left(\widehat{W}^{t}|_{[t,t_{k}^{n}]},V^{1},\dots,V^{k}\right)
\end{align}
satisfy
\begin{align}\label{eq:random1}
	\begin{aligned}
		&\mbox{law of $\left\{\widehat{W}^{t},(\widehat{b}^{(n)}(1),\widehat{\sigma}^{(n)}(1)),\dots,(\widehat{b}^{(n)}(n),\widehat{\sigma}^{(n)}(n))\right\}\;$ under $\;\widehat{\mathbb{P}}$}\\
		&=\mbox{law of $ \left\{{W}^{\mathbb{P},t},({b}^{\mathbb{P},(n)}_{t_1^{n}},{\sigma}^{\mathbb{P},(n)}_{t_1^{n}}),\dots,({b}^{\mathbb{P},(n)}_{t_n^{n}},{\sigma}^{\mathbb{P},(n)}_{t_n^{n}})\right\}\;$ under $\;{\mathbb{P}}$}.
	\end{aligned}
\end{align}

\vspace{0.5em}
Now for each $(u,v):=(u^1,\dots,u^n)\times (v^1,\dots,v^n)\in (0,1)^{nd} \times(0,1)^{n d^2}$ and $k=1,\dots,n$, set 
\begin{align}\label{eq:regul.cond.fix}
	\widehat{b}^{(n)}(k;u):= \Theta^{1}_k\left(\widehat{W}^{t}|_{[t,t_k^n]},u^1,\dots,u^k\right),\;\; \widehat{\sigma}^{(n)}(k;v):= \Theta^{2}_k\left(\widehat{W}^{t}|_{[t,t_k^n]},v^1,\dots,v^k \right),
\end{align}
and denote by $\widehat{b}^{(n);u}$ and $\widehat{\sigma}^{(n);v}$ other piecewise constant processes defined on $[t,T]$ such that
\begin{align*}
	\begin{aligned}
		\widehat{b}_s^{(n);u}&:= {\bf 1}_{\{s\in [t,t_1^{n}]\}}~{b}^{o} +\sum_{k=1}^{n-1}{\bf 1}_{\{s\in (t_{k}^n,t_{k+1}^n]\}}\; \widehat{b}^{(n)}(k;u),\\
		\widehat{\sigma}_s^{(n);v}&:= {\bf 1}_{\{s\in [t,t_1^{n}]\}}~{\sigma}^{o} +\sum_{k=1}^n{\bf 1}_{\{s\in (t_{k}^n,t_{k+1}^n]\}}\; \widehat{\sigma}^{(n)}(k;v).
	\end{aligned}
\end{align*}

With the notations in place, we define for each $(u,v)\in (0,1)^{nd} \times(0,1)^{nd^2}$
\begin{align}\label{eq:random2}
	\widehat{X}^{(n);u,v} = x \oplus_t\Bigg(x+ \int_t^\cdot \widehat{b}^{(n);u}_s ds + \int_t^\cdot  \widehat{\sigma}^{(n);v}_s d\widehat{W}_s^{t}\Bigg).
\end{align}

Note that for every $k=1,\dots,n$ (see \eqref{eq:regul.cond} and \eqref{eq:regul.cond.fix}) 
\[
\widehat{b}^{(n)}(k;U)=\widehat{b}^{(n)}(k),\qquad\widehat{\sigma}^{(n)}(k;V)=\widehat{\sigma}^{(n)}(k).
\]
Then by the definitions of $X^{\mathbb{P},(n)}$ and $\widehat{X}^{(n);u,v}$ (see \eqref{eq:X.const} and \eqref{eq:random2}) and the property given in \eqref{eq:random1}, 
\[
\;\mbox{law of $\widehat{X}^{(n);U,V}\;$ under $\;\widehat{\mathbb{P}}$}=\mbox{law of $X^{\mathbb{P},(n)}\;$ under $\;{\mathbb{P}}$}.
\]

Furthermore, as the random variables $(\widehat{b}^{(n)}(k), \widehat{\sigma}^{(n)}(k))$ and $(\widehat{b}^{(n)}(k;u), \widehat{\sigma}^{(n)}(k;v))$ are uniformly bounded for every $k=1,\dots,n$ and $(u,v)\in(0,1)^{nd}\times(0,1)^{nd^2}$, by using the same arguments given in Lemma \ref{lem:priori_weak}, the following holds for every $p\geq 1$,
\begin{align}\label{eq:unif_integ_p}
	\sup_{(u,v)\in (0,1)^{nd}\times(0,1)^{nd^2}}\mathbb{E}^{\widehat{\mathbb{P}}}\left[\sup_{0\leq t\leq T} \left|\widehat{X}_t^{(n);u,v}\right|^p\right]+\mathbb{E}^{\widehat{\mathbb{P}}}\left[\sup_{0\leq t\leq T} \left|\widehat{X}_t^{(n);U,V}\right|^p\right]<\infty.
\end{align}

Therefore, an application of Fubini theorem and \eqref{eq:unif_integ_p} ensure that for every $p\geq 1$ and $\xi\in C_p({\Omega}^{0,x};\mathbb{R})$ (see \eqref{eq:p.mmt.set} for the definition)
\begin{align}\label{eq:HahnBanach1}
	\mathbb{E}^{{\mathbb{P}}}\left[\xi\left({X}^{\mathbb{P},(n)}\right)\right]= \mathbb{E}^{\widehat{\mathbb{P}}}\left[\xi\left(\widehat{X}^{(n);U,V}\right)\right]
	\leq \sup_{(u,v)\in (0,1)^{nd}\times(0,1)^{nd^2}}\mathbb{E}^{\widehat{\mathbb{P}}}\left[\xi\left(\widehat{X}^{(n);u,v}\right)\right]<\infty.
\end{align}


Furthermore, from \eqref{eq:unif_integ_p}, it follows that the laws of $\widehat{X}_t^{(n);U,V}$ and $(\widehat{X}_t^{(n);u,v})$, $(u,v)\in (0,1)^{nd}$ $\times(0,1)^{nd^2}$, belong to ${\cal P}^p(\Omega^{0,x})$ for every $p\geq 1$ (that is equipped with the topology~$\tau_p$; see \eqref{eq:p.msr.set} and \eqref{eq:top.tau.wass}).


{Therefore an application of Hahn-Banach theorem} 
guarantees that for each $n\in\mathbb{N}$ and $p\geq 1$, the law of ${X}^{\mathbb{P},(n)}$ is contained in the $\tau_p$-closure of the convex hull of the laws~of 
\begin{align}\label{eq:random4}
	\left\{x\oplus_t \left(x+\int_t^\cdot {\mu}^u(s,\widehat{W}^{t})ds+ \int_t^\cdot {\Sigma}^v(s,\widehat{W}^{t})d\widehat{W}^{t}_s\right)\;\Big|\;(u,v)\in (0,1)^{nd} \times(0,1)^{nd^2} \right\},
\end{align}
where for every $(u,v)\in(0,1)^{nd}\times (0,1)^{nd^2}$, ${\mu}^u:[t,T]\times \Omega^{t}\rightarrow {\cal B}^{\varepsilon,1}$ and ${\Sigma}^{v}:[t,T]\times \Omega^{t}\rightarrow {\cal B}^{\varepsilon,2}$ are adapted Borel functionals on $\Omega^t$.

Replacing $\widehat{W}^t$ with $W^{\mathbb{P},t}$ in the set \eqref{eq:random4} ensures the claim to hold.
\end{proof}

Define by ${\cal G}([t,T]\times \Omega^{t};\mathcal{B}^{\varepsilon,1})$ the set of all adapted, $\mathcal{B}^{\varepsilon,1}$-valued, Borel functionals on $\Omega^{t}$. Define ${\cal G}([t,T]\times \Omega^{t};\mathcal{B}^{\varepsilon,2})$ analogously, with $\mathcal{B}^{\varepsilon,1}$ replaced by $\mathcal{B}^{\varepsilon,2}$, and set
\begin{align}\label{eq:uncertain.ftn}
{\cal G}^\varepsilon(t):= \left\{(\mu,\Sigma)\in {\cal G}([t,T]\times \Omega^{t};\mathcal{B}^{\varepsilon,1})\times {\cal G}([t,T]\times \Omega^{t};\mathcal{B}^{\varepsilon,2}) \right\}.
\end{align}

\begin{proof}[Proof of Proposition \ref{pro:weak.dense}]
Recall the Wiener measure $\mathbb{P}_0^t$ defined on $(\Omega^t,{\cal F}^{t},\mathbb{F}^{W^t})$ under which the canonical process $W^t=(W^t_s)_{s\in[t,T]}$ is a Brownian motion satisfying $W^t_t=0$ (see Section~\ref{sec:thm:main}). Moreover, recalling the set ${\cal G}^\varepsilon(t)$ given in \eqref{eq:uncertain.ftn}, we denote~by 
\begin{align}\label{eq:char.strong.subset}
	{\cal D}^{\varepsilon}(t):=\left\{(b,\sigma)\;\Big|\;\mbox{$(b_s,\sigma_s):=(\mu(s,W^t),\Sigma(s,W^t))$\;\;\mbox{for $s\in[t,T]$},\;\;$(\mu,\Sigma)\in {\cal G}^\varepsilon(t)$}\right\},
\end{align}
and we define 
\begin{align}\label{eq:uncertain.strong}
	{\cal Q}^\varepsilon_{\operatorname{sub}}(t,x):={\cal Q} \left({t,x};{\cal D}^\varepsilon \right):=\left\{\mathbb{P}_0^t\circ \left(x\oplus_t {X}^{t,x;b,\sigma}  \right)^{-1}\;\Big|
	\;\mbox{$(b,\sigma)\in {\cal D}^\varepsilon(t)$}\right\},
\end{align}
where ${X}^{t,x;b,\sigma}$ is defined in \eqref{eq:ito.semi.time}.

From the definition of ${\cal G}^\varepsilon(t)$, it follows that ${\cal D}^{\varepsilon}(t)\subseteq {\cal C}^{\varepsilon}(t)$ (see \eqref{eq:char.strong}). Furthermore, since ${\cal Q}^\varepsilon(t,x)={\cal Q} \left({t,x};{\cal C}^{\varepsilon} \right)$ (see \eqref{eq:uncertain.strong1}), by Remark \ref{rem:inclusion} we have
\[
{\cal Q}^\varepsilon_{\operatorname{sub}}(t,x)\subseteq {\cal Q}^\varepsilon(t,x)\subseteq {\cal P}^\varepsilon(t,x).
\]

Now let $\mathbb{P}\in {\cal P}^\varepsilon(t,x)$. 
Lemma \ref{lem:discret} ensures that ${X}^{\mathbb{P},(n)}$, $n\in\mathbb{N}$, given in \eqref{eq:X.const}, converges to the canonical process $X$ in ${\cal H}^2({\Omega^{0,x}},{\cal F}^{0,x},\mathbb{F}^{X},\mathbb{P})$. 
Thus Lemma~\ref{lem:random} together with \eqref{eq:char.strong.subset} and \eqref{eq:uncertain.strong} ensures that $\mathbb{P}$ is contained in the $\tau_p$-closure of the convex hull of ${\cal Q}^\varepsilon_{\operatorname{sub}}(t,x)$ for every~$p\geq 1$.  
\end{proof}

\newpage 

\bibliographystyle{abbrv}
\bibliography{references}
     
\end{document}